\numberwithin{equation}{section}
\newtheorem{theorem}{Theorem}
\newtheorem{lemma}{Lemma}[section]
\newtheorem{remark}{\it Remark\/}
\newtheorem*{acknowledgements}{Acknowledgements}
\DeclareMathOperator*{\esssup}{ess\,sup}
\providecommand{\norm}[1]{\lVert#1\rVert}
\providecommand{\abs}[1]{\lvert#1\rvert}
\title{\Large{\textbf{\scshape{Unbounded  solutions for  the Muskat problem}}}}
\author{\small{\scshape{Omar S\'anchez }}}
\date{}
\begin{document}
\maketitle
\begin{abstract}
	We prove the local existence of solutions of the form $x^2+ct+g,$ with $g\in H^s(\mathbb R)$ and $s\geq 3,$ for the Muskat problem in the stable regime. We use energy methods to obtain a bound of $g$ in Sobolev spaces. In the proof we deal with the loss of the Rayleigh-Taylor condition at infinity and a new structure of the kernels in the equation. Remarkably, these solutions grow quadratically at infinity.

\end{abstract}
\thispagestyle{empty}
\tableofcontents

 \section{Introduction}
The Muskat problem  models the interaction of two  immiscibles  fluids with different densities in a porous medium.  The fluids are separated by an interface, which  splits the plane $\mathbb R^2$ in two fluid domains $\Omega_{+}$ and $\Omega_-.$ This problem was  originally introduced by Morris Muskat in \cite{Muskat} as a model for oil extraction and  has attracted great interest from mathematicians in  recent decades.  The equation governing the dynamic of the fluids is  Darcy's law
    \begin{equation}\label{dl}
		\begin{split}
	\frac{\mu}{\kappa}v^\pm=-\nabla p^\pm-\rho^\pm \mathrm{g}e_2&\quad \mbox{ in }\quad \Omega_{\pm},\\
		\end{split}
	\end{equation}
	where $v^{\pm}$ is the velocity, $\rho^{\pm}$ the density and  $p^{\pm}$ the pressure in the fluids domains $\Omega_{\pm}.$   The  viscosity $\mu$, the  permeability $\kappa$ and $\mathrm{g}$ the gravity are constants and we will assume that they are all equal to 1. The density 
	\[\rho(\mathbf{x},t)=\left\{\begin{array}{rl}
\rho^+(\mathbf{x},t), &  \mathbf{x}\in \Omega_+(t),\\
\rho^-(\mathbf{x},t), &   \mathbf{x}\in \Omega_-(t),
\end{array}\right.\]
 where $\mathbf{x}=(x,y)\in\mathbb R^2,$ satisfies the mass conservation equation
	 \begin{equation}\label{ms}
		\begin{split}
	\partial_t\rho+v\cdot \nabla \rho =0&\quad \mbox{ in }\quad \mathbb R^2,\\
		\end{split}
	\end{equation}
	in a weak sense. Here 
		\[v(\mathbf{x},t)=\left\{\begin{array}{rl}v^+(\mathbf{x},t), &  \mathbf{x}\in \Omega_+(t),\\
v^-(\mathbf{x},t),&   \mathbf{x}\in \Omega_-(t).
\end{array}\right.\]
	We will also assume the fluids are incompressible, \emph{i.e.}
		 \begin{equation}\label{in}
		\begin{split}
		\mathrm{div}(v^\pm)=0&\quad \mbox{ in }\quad \Omega_{\pm}.
		\end{split}
	\end{equation}
	In general,  the interface  could be   an arbitrary curve, in our case we will assume that it  is parameterized by the graph of a function $h$ (see figure \ref{fig1}). Thus 
	\[\partial\Omega_{\pm}(t)=\{(x,h(x,t)):x\in\mathbb R,t>0\}.\]
We assume  that the density $\rho^{\pm}(\mathbf{x},t)$   is a step function
	\[\rho^\pm(\mathbf{x},t)=\left\{\begin{array}{rl}
\rho^+, &  \{y>h(x,t)\},\\
\rho^-,&   \{y<h(x,t)\},
\end{array}\right.\]
where $\rho^{\pm}\in\mathbb R$ are two constant values.
\begin{figure}[H]
				\centering
		\definecolor{zzttqq}{rgb}{0.4,0.8,0.}
\begin{tikzpicture}[scale=1.8]
\fill[line width=1.pt,color=zzttqq,fill=zzttqq,fill opacity=0.10000000149011612] (0.,0.) -- (0.,3.) -- (4.,3.) -- (4.,0.) -- cycle;
\draw[line width=0.pt,color=zzttqq,fill=zzttqq,fill opacity=0.20000000298023224, smooth,samples=50,domain=0.0:4.0] plot(\x,{cos(((\x)-1)*180/pi)+1.5}) -- (4.,0.) -- (0.,0.) -- cycle;
\draw[line width=1.pt,smooth,samples=100,domain=0:4] plot(\x,{cos(((\x)-1)*180/pi)+1.5});
		\draw (1,0.6) node[anchor=north west] {$\Omega_-(t),\,\rho^-$};
		\draw (1.0,0.8) node[anchor=north west] {$\mbox{Fluid}-$};
		\draw (2.2,2.5) node[anchor=north west] {$\Omega_+(t),\,\rho^+$};
		\draw (2.2,2.7) node[anchor=north west] {$\mbox{Fluid}+$};
		\draw (4,0.66) node[anchor=north west] {$\leadsto (x,h(x,t))$};
	\end{tikzpicture}
	\caption{Interface $h(x,t).$}
	\label{fig1}
			\end{figure}
\noindent The equations (\ref{dl}), (\ref{ms}) and (\ref{in}) are known as the Incompressible Porous Media system (IPM) and they are supplemented by the  boundary conditions
		\begin{equation}\label{normalcontinuity}
			\begin{split}
					&(v^+-v^-)\cdot n=0\quad \mbox{in} \quad \partial \Omega_{\pm},
	 				\end{split}
		\end{equation}
		\[p^+=p^- \quad \mbox{in} \quad \partial \Omega_{\pm},\]
		where  $n$ denotes the unit normal vector to $\partial \Omega_{-},$ pointing out $\Omega_-$
		\[n=\frac{(-h'(x),1)}{\sqrt{1+h'(x)^2}}.\]
Notice that (\ref{normalcontinuity}) implies that $\nabla \cdot v=0$ in a weak sense. In addition, from (\ref{ms}), we can recover the kinematic boundary condition
		\[\partial_th= v^{+}(x,h(x,t))\cdot(-\partial_xh,1),\quad x\in\mathbb R.\]
		The mathematical formulation of this problem is the same as that  for two incompressible fluids in a Hele-Shaw cell, see \cite{MR0097227}. In \cite{DP}, C\'ordoba and Gancedo showed  that the Muskat problem  can  be reduced to an evolution equation for  the function $h$
		\begin{equation}\label{MEC}\frac{d}{dt}h(x,t)=\frac{\rho^--\rho^+}{2\pi}PV\int_{\mathbb R}\frac{\alpha\cdot (\partial_xh(x,t)-\partial_xh(x-\alpha,t))}{\alpha^2+(h(x,t)-h(x-\alpha,t))^2}d\alpha.\end{equation}
	The stability of (\ref{MEC}) strongly depends on the sign of the Rayleigh-Taylor function
\[\mathrm{RT}=-(\nabla p^-(\mathbf{x},t)-\nabla p^+(\mathbf{x},t))\cdot n,\quad \mathbf{x}\in\partial\Omega_{\pm},\] 
that in our case can be written as follows
	\[\mathrm{RT}=\frac{\rho^--\rho^+}{\sqrt{1+(\partial_xh)^2}}.\] 
When $\mathrm{RT}>0,$ this means the heaviest fluid is always below,  the problem is stable. 
In this regime, local existence of  solutions  is very well known as well as global existence for small initial data. However, if the heaviest fluid is above the situation is unstable and (\ref{MEC}) is ill-posed. We will review some of the literature dealing with these issues in section \ref{previous}.\\

\noindent In this paper we study the existence of non trivial solutions of (\ref{MEC}) of the form
\[h(x,t)=x^2+(\rho^--\rho^+)t+g(x,t),\]
where $g\in L^\infty((0,T): H^3(\mathbb R)).$ Thus, our  solutions grow quadratically at  infinity. As far as we know these are the solutions with the highest growth at  infinity that have been shown to exist. \\

\noindent Our main result reads as follows.
 \begin{theorem}\label{main_theorem} Let $s\geq 3$ and  $g_0\in H^s(\mathbb R).$ Then there exists a time $T_0=T(\norm{g_0}_{H^s})>0$ and a function $g\in L^\infty([0,T_0]:H^s(\mathbb R))\cap W^{1,\infty}([0,T_0]: H^{s-1}(\mathbb R))$ such that the function
 \[h(x,t)=x^2+(\rho^--\rho^+)t+g(x,t)\]
 solves  (\ref{MEC}) with $h(x,0)=x^2+g_0(x).$
    \end{theorem}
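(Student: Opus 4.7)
The plan is to substitute the ansatz into (\ref{MEC}) to obtain an evolution equation for the perturbation $g$, to close a tame $H^s$ energy estimate, and to construct the solution by a standard regularization and compactness argument.

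\emph{Deriving the equation for $g$.} Setting $c = \rho^- - \rho^+$ and $h_0(x,t) = x^2 + ct$, one computes $\partial_x h_0(x) - \partial_x h_0(x-\alpha) = 2\alpha$ and $h_0(x,t) - h_0(x-\alpha,t) = \alpha(2x-\alpha)$, so that the right-hand side of (\ref{MEC}) at $h_0$ equals $\frac{c}{2\pi}\int_{\mathbb R} \frac{2\,d\alpha}{1+(2x-\alpha)^2} = c = \partial_t h_0$; thus $h_0$ is itself an exact solution. Subtracting, the perturbation $g$ satisfies an equation of the form $\partial_t g = \mathcal{L}[g] + \mathcal{N}[g]$, where $\mathcal{L}$ is the linearization of the right-hand side at $h_0$ and $\mathcal{N}$ collects the nonlinear remainder. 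Expanding $[\alpha^2+(A+B)^2]^{-1}$ with $A=\alpha(2x-\alpha)$, $B=g(x)-g(x-\alpha)$ in powers of $B$ splits $\mathcal{L}$ into a nonlocal second-order piece acting on $\partial_x g(x) - \partial_x g(x-\alpha)$ and a lower-order piece acting on $B$, both weighted by the kernel $[1+(2x-\alpha)^2]^{-1}$.

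\emph{Energy estimate.} I would apply $\partial_x^s$ to the $g$-equation, pair with $\partial_x^s g$ in $L^2$, and estimate term by term. The top-order contribution from $\mathcal{L}$ is the one that would ordinarily be absorbed by a positive dissipation in the stable Muskat problem; here, however, the principal symbol carries a coefficient behaving like $(1+4x^2)^{-1}$, reflecting the decay $\mathrm{RT}\sim c/|2x|$ at infinity, so parabolic gain is not available globally in $x$. I would handle these top-order terms by symmetrizing the integrand under $\alpha\mapsto-\alpha$, shifting derivatives onto the explicit kernel via integration by parts in $\alpha$, and applying Kato--Ponce-type commutator bounds; each of these manoeuvres exploits the concrete form of the quadratic background. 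The nonlinear term $\mathcal{N}[g]$ is at least quadratic in $g$ and its derivatives up to order $s$ and is controlled by standard Sobolev product and composition estimates. The outcome is a tame bound
\[
\frac{d}{dt}\norm{g(t)}_{H^s}^2 \le \Phi(\norm{g(t)}_{H^s})\,\norm{g(t)}_{H^s}^2,
\]
with $\Phi$ continuous, yielding a local existence time $T_0 = T(\norm{g_0}_{H^s})$.

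\emph{Existence and main obstacle.} I would construct approximate solutions $g^{\varepsilon}$ by mollifying $g$ inside the kernel and truncating the $\alpha$-integration, solve the resulting ODE on $H^s$ by Picard, verify the energy estimate uniformly in $\varepsilon$, and pass to the limit by Aubin--Lions compactness; the $W^{1,\infty}([0,T_0];H^{s-1})$ regularity then follows from the equation itself, which loses only one derivative. The hard part is the top-order energy bound: because the standard parabolic smoothing of the stable Muskat problem is destroyed by the $|x|^{-2}$ degeneracy of the Rayleigh--Taylor weight, every dangerous top-order term must be cancelled or rewritten using the specific structure of the kernels generated by the quadratic background $x^2$, rather than dominated by a positive dissipative quantity as in the usual $H^s$ theory.
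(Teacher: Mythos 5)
Your overall architecture coincides with the paper's: verify that the parabola $x^2+(\rho^--\rho^+)t$ solves (\ref{MEC}), derive an evolution equation for the perturbation with $x$-dependent kernels, prove a polynomial $H^s$ energy bound, and construct solutions by mollification, Picard iteration, uniform estimates and passage to the limit. The genuine gap is in the step you yourself single out as the hard one: you never supply the mechanism that closes the top-order estimate. After pairing with $\partial_x^s g$, the dangerous contribution is essentially $-\int_{\mathbb R} K(x,0)\,\partial_x^s g\,\Lambda\partial_x^s g\,dx$ with $K(x,0)=\big(1+(\partial_xh)^2\big)^{-1}\sim (4x^2)^{-1}$: it carries half a derivative too many, and none of the tools you list (symmetrization under $\alpha\mapsto-\alpha$, integration by parts in $\alpha$, Kato--Ponce commutators) disposes of it by itself. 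A commutator strategy of the form $\int K\phi\Lambda\phi=\int K(\Lambda^{1/2}\phi)^2+\int\Lambda^{1/2}\phi\,[\Lambda^{1/2},K]\phi$ does not close, because the error still contains $\Lambda^{1/2}\phi$ and the positive term is too degenerate (the weight decays like $x^{-2}$) to absorb it globally. What actually saves the estimate in the paper is the sign of the degenerate coefficient combined with the C\'ordoba--C\'ordoba pointwise inequality $\phi\Lambda\phi\ge\tfrac12\Lambda(\phi^2)$: since $K(x,0)>0$, the term is bounded by $-\tfrac12\int\Lambda K(x,0)\,\phi^2\,dx$, i.e. the whole operator $\Lambda$ is transferred onto the kernel, and a separate lemma shows $\norm{\Lambda K(\cdot,0)}_{L^\infty}\leq c\,(1+\norm{g}_{C^{2,\delta}})$. (An exact symmetrization of the quadratic form in the $(x,y)$ variables yields the same conclusion, but that is precisely the ingredient you would have to state; the positivity of $K(x,0)$ is indispensable and is never invoked in your sketch.) Likewise, ``standard Sobolev product and composition estimates'' do not suffice for the remaining terms: because the kernels depend explicitly on $x$ through $\Delta_\alpha f=2x-\alpha$ and are not uniformly integrable in $\alpha$, the paper devotes a whole section to bespoke bounds (Hilbert transforms of rational functions, $\int_{\mathbb R}K(x,\alpha)^2dx\leq c(1+\norm{\partial_xg}_{L^\infty})$, $L^\infty$ bounds on $PV\int\alpha^{-1}\partial_x^jK\,d\alpha$), all of which exploit the $x$-decay created by the quadratic background.

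Two smaller points. Expanding $[\alpha^2+(A+B)^2]^{-1}$ ``in powers of $B$'' is not licit without smallness of $B$; the paper uses the exact algebraic difference of the two kernels, which produces the second kernel $G$ in closed form. And Aubin--Lions compactness is delicate on the unbounded line; the paper instead shows the regularized solutions are Cauchy in $C([0,T_0]:L^2(\mathbb R))$ and interpolates, obtaining only $L^\infty_t H^s$ with weak continuity in time (continuity is explicitly not claimed, due to the loss of parabolicity), so your compactness step as written would need this replacement or a localization argument.
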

    \begin{remark} Let us remark that $T_0\to \infty$ when $\norm{g_0}_{H^s}\to 0$. 
\end{remark}
    The strategy  of the proof consists of two main steps:
    \begin{enumerate}
    	\item Firstly, we will check that $f(x,t)=x^2+(\rho^--\rho^+)t$ is actually a solution of (\ref{MEC}).
    	\item Secondly,  we will derive an equation for the function $g(x,t)=h(x,t)-x^2-(\rho^--\rho^+)t,$ (see equation (\ref{equation_g})). Then, we will prove the local existence of solutions for this equation  using energy estimates.
    \end{enumerate}
    
    Let us emphasize that the analysis of equation (\ref{equation_g}) for the evolution of $g(x,t)$ presents severals differences with respect to the analysis of (\ref{MEC}) in $H^s(\mathbb R)$ or $\dot{H}^k(\mathbb R)$ spaces, with $0\leq k\leq 2.$ Indeed, the quadratic growth at infinity introduces a degeneration of the kernels at  infinity that need to be understood. In addition, the explicit dependence  of $x$ leads to pseudodifferential operators, as opposed to the differential ones which occur in the classical Muskat problem. Notice that the kernel in (\ref{MEC}) is of the form $K(y,h(x),h(x-y))$  but in (\ref{equation_g}) we find two kernels of the form $K(x,y,g(x),g(x-y)).$ Finally, we find in (\ref{equation_g}) a new term which has no analogous in (\ref{MEC}).
    
    \begin{remark}
    In this paper we just deal with local existence of solutions. One could ask for global existence for small initial data,  as it is proven in the classical case. The reason why in our case  to prove global existence is more difficult than in the classical case, is that the Rayleigh-Taylor conditions breaks down at infinity and the parabolicity is lost. Same phenomenom causes that in \cref{main_theorem} the solutions $g\in L^\infty((0,T): H^s(\mathbb R))$ instead of $g\in C((0,T): H^s(\mathbb R)).$\end{remark}
    
The paper is organized as follows: In section \ref{previous} we will review some results concerning the existence of solutions for the Muskat problem. In section  \ref{notation} we will prove that $f(x,t)=x^2+(\rho^--\rho^+)t$ solves the Muskat equation and  we will derive  equation (\ref{equation_g}). Section \ref{energy} is dedicated to obtain the appropiate energy estimate for the function $g.$  All the necessary lemmas to prove the energy estimate are presented in section \ref{KernelBounds}. Finally,  section \ref{regularization} is devoted to the  study of the regularized system  in order to obtain existence of solutions.
    
 \subsection{Previous results}\label{previous}
	
		 The Muskat problem has been extensively  studied in the last decades. The first local existence result was established by Yi in \cite{MR2019452}, using  Newton's iteration method. Ambrose  in \cite{MR2128613},  using a  formulation for the tangent angle proved  local existence in $H^s(\mathbb R),s\geq 3.$ Caflish, Siegel and Howison proved in \cite{MR2070208} ill-posedness in the unstable case.	C\'ordoba and Gancedo in \cite{DP} proved  local existence in $H^s(\mathbb R),s\geq 3$, for the 2d case and $H^s(\mathbb R^2),s\geq 4$, for the 3d case, using energy methods.  Cheng,  Granero-Belich\'on  and Shkoller in \cite{MR3415681} established  global existence for a small initial data in  $H^2(\mathbb T)$ with different viscosities. Tofts in  \cite{MR3714494} by using a similar approach as Ambrose,  proved  global existence for small data in $H^s(\mathbb R),s\geq 6$ when surface tension is added.

	 Solutions of the Muskat equation (\ref{MEC}) satisfies a $L^\infty(\mathbb R)$ and $L^2(\mathbb R)$ maximum principles, see the work of   C\'ordoba and Gancedo   in \cite{MR2472040} and  the work of Constantin, C\'ordoba, Gancedo and Strain in \cite{MR2998834}.	In \cite{MR3661870}, Constantin, Gancedo, Shvydkoy and Vicol proved  local  existence for  initial data in  $W^{2,p}(\mathbb R)$ for $p\in(1,\infty].$ In the same paper, they proved  global existence when the slope $h'$  remains bounded. Later, in \cite{MR3869383}, Cameron  established  global existence in   $C^{1,\epsilon}(\mathbb R)$ using a criteria in terms of the product of the supremum and infimum of the slope of the initial data. For a small  data   Constantin, C\'ordoba, Gancedo and Strain in \cite{MR2998834}	proved   global existence for  initial data in  $H^3(\mathbb R)$  with a small derivative in the Wiener algebra $\mathcal A(\mathbb R).$ They also established the existence of global weak solutions for  $W^{1,\infty}(\mathbb R)$ initial data with the condition $\norm{h_0'}_{L^\infty}<1.$ In a subsequent paper \cite{MR3595492}, the same authors together with Rodr\'iguez-Piazza extended these results to the 3d case. 
	 	
	 		 	We observe that the Muskat equation (\ref{MEC}) is invariant  by the scale  $h_\lambda(x,t)=\lambda^{-1}h(\lambda x,\lambda t),$ \emph{i.e.} if  $h$ is a solution then $h_\lambda$ is also a solution. The spaces which are invariant under this scaling  are called critical spaces, for example both $\dot H^{3/2}(\mathbb R)$ and $\dot W^{1,\infty}(\mathbb R).$  In \cite{MR3861893}, Matioc proved  local existence for initial data $ H^s(\mathbb R)$ with $s\in (3/2,2).$ In a posterior work \cite{MR4390287},  Abels and Matioc established local existence for  initial data in $W^{s,p}(\mathbb R)$ with $p\in (1,\infty)$ and  $s\in (1+1/p,2),$ notice that $W^{1+1/p,p}(\mathbb R)$ is a critical space as well.
	 	
	 	In  \cite{MR4363243}, C\'ordoba and Lazar proved global existence  for initial data in  $\dot H^{3/2}(\mathbb R)\cap \dot H^{5/2}(\mathbb R)$ with a small assumption over $\dot H^{3/2}(\mathbb R),$  by using oscillatory integrals and a new formulation of the Muskat equation. Later, in order to get lower  regularity Alazard and Lazar established in \cite{MR4097324} local  existence for  initial data in $\dot H^1(\mathbb R)\cap \dot H^s(\mathbb R)$ with $s>3/2$. In a posterior work \cite{MR4242131}, Alazard and  Nguyen  proved  local existence for an initial data in the critical space $\dot W^{1,\infty}(\mathbb R)\cap H^{3/2}(\mathbb R),$ and the existence of global solutions for  small initial data. In  \cite{MR4313450} the same authors showed local and global  existence for   non-Lipchitz initial data. Recently, in  \cite{MR4541917}  they  proved local existence  for  initial data in $H^{3/2}(\mathbb R)$  and global existence in $H^{3/2}(\mathbb R)$ with a small condition over $\dot H^{3/2}(\mathbb R).$
	 	
	 In the 3d case, Gancedo and Lazar in   \cite{MR4487512}, proved  global existence for  the critical space $\dot H^2 (\mathbb R^2)\cap \dot W^{1,\infty}(\mathbb R^2).$  Alazard and  Nguyen proved in \cite{MR4387237}, using a different approach,  the same result of  \cite{MR4487512} and established the existence  of solutions for a non-Lipchitz initial data. Nguyen and Pausader proved  in \cite{MR4090462} the local existence for initial data in the subcritical space $H^s(\mathbb R^d),$ where $s>1+d/2.$   In \cite{MR4348695}  Nguyen established the global existence for small  initial data in the Besov space $\dot B_{\infty,1}^1(\mathbb R^d).$
	 
	  In \cite{MR3639321} Deng, Lei and Lin  constructed global weak solutions under  the assumptions that the initial interface is  monotonically decreasing with asymptotic behavior  at infinity \emph{i.e.} $f_0(x)\to a, x\to\infty$. Cameron in \cite{Cameron2020GlobalWF} proved  the existence of solutions  in the 3d case that are unbounded and has sublinear growth. In \cite{MR4385135}, Garc\'ia-Ju\'arez,  G\'omez-Serrano, Nguyen and Pausader proved the existence of  self-similar solutions. In  \cite{GarciaJuarez2023DesingularizationOS}, Garc\'ia-Ju\'arez,  G\'omez-Serrano,  Haziot and  Pausader proved  local existence when the initial interface  has multiple corners and linear growth at infinity.
	 
	 None of these results allow quadratic growth of the interface at infinity.
	 	
	 	In the unstable regime $\rho^+>\rho^-$ the Muskat equation is ill-posed,  see \cite{DP} and \cite{MR2070208}, then mixing solutions are used  to describe this scenario. In \cite{MR4309495}, Castro, C\'ordoba and Faraco studied this kind of solutions using convex integration and the theory of pseudodifferential operators  after the work of L. Sz\'ekelyhidi, see \cite{MR3014484}.  In the same direction see \cite{MR3921353}, \cite{MR4350274}, \cite{MR4193647} and \cite{MR3858828}. Mengual in \cite{MR4409883} studied the unstable case with different viscosities. Recently Castro, Faraco and Gebhard in \cite{castro2023entropy} studied maximal potential energy dissipation as a selection criterion for subsolutions. For others results concerning convex integration applied to IPM see \cite{MR2796131} and \cite{MR3479065}.
	 	
	 C\'ordoba, C\'ordoba and Gancedo proved,  in \cite{MR2753607},  local existence in $H^k(\mathbb T)$ with $k\geq 3,$ considering  different viscosities and positive RT. Later, the same authors treated in  \cite{MR3071395} the 3d case for a $H^4$ surface also in the case with different viscosities.  Gancedo, Garc\'ia-Ju\'arez, Patel and Strain in \cite{MR3899970} proved   global existence for small initial data in both 2d and 3d cases, also considering different viscosities. 
	 
	 For finite time singularities, in \cite{MR2993754}  Castro, C\'ordoba, Fefferman, Gancedo  and  L\'opez-Fern\'andez proved that there is an open subset of initial data in $H^4$ such that the Rayleigh-Taylor condition breaks down in finite time.  This means that the initial interface is  a graph $\mathrm{RT}>0,$ then in a finite time the interface is not a graph, $\mathrm{RT}<0.$ This is called \emph{turning singularity}.	In \cite{MR3048596} Castro, C\'ordoba, Fefferman and  Gancedo, proved that there exist solutions which lose the Rayleigh-Taylor condition and, after that, lose regularity in  finite time. These singular solutions have been extended over time as mixing solutions in \cite{MR4406890}.  C\'ordoba, 	G\'omez-Serrano and	 Zlato\v{s} proved  in \cite{MR3393318} the existence of solutions  that start in the unstable regime, then become stable and finally return to the unstable regime. The same authors in  \cite{MR3619874} established the existence of solutions  that start in the stable regime, then become unstable and finally return to the stable regime.

	\subsection{Notation and preliminaries }\label{notation}
	In this section, we derive the equation (\ref{equation_g})   and  introduce some notation that will be used throughout the paper.  The first step  is to prove that $f(x,t)=x^2+ct$ is an explicit solution of the Muskat equation. We have the following lemma.
		\begin{lemma}\label{par}
	The parabola $f(x,t)=x^2+ct$ solves the Muskat equation (\ref{MEC}) with $c=\rho^--\rho^+>0.$
	\end{lemma}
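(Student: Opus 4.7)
The plan is a direct verification: plug $f(x,t)=x^2+ct$ into the right-hand side of (\ref{MEC}) and check it equals $\partial_t f = c$.

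First I would compute the building blocks of the integrand. Since $\partial_x f(x,t)=2x$, the numerator simplifies nicely:
\[\alpha\cdot\bigl(\partial_x f(x,t)-\partial_x f(x-\alpha,t)\bigr)=\alpha\cdot\bigl(2x-2(x-\alpha)\bigr)=2\alpha^2.\]
For the denominator, expand $f(x,t)-f(x-\alpha,t)=x^2-(x-\alpha)^2=\alpha(2x-\alpha)$, so
\[\alpha^2+\bigl(f(x,t)-f(x-\alpha,t)\bigr)^2=\alpha^2\bigl(1+(2x-\alpha)^2\bigr).\]
Observe that the factor $\alpha^2$ cancels cleanly, which already tells us that the integrand has no singularity at $\alpha=0$ and in particular the principal value is an ordinary integral. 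This cancellation is the main structural reason the parabola works.

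After cancellation the integrand reduces to $\dfrac{2}{1+(2x-\alpha)^2}$. Substituting $u=2x-\alpha$ I would get
\[\frac{\rho^--\rho^+}{2\pi}\int_{\mathbb R}\frac{2}{1+(2x-\alpha)^2}\,d\alpha=\frac{\rho^--\rho^+}{2\pi}\int_{\mathbb R}\frac{2}{1+u^2}\,du=\rho^--\rho^+,\]
using $\int_{\mathbb R}(1+u^2)^{-1}du=\pi$. On the other hand $\partial_t f(x,t)=c=\rho^--\rho^+$, matching the right-hand side exactly and confirming $f$ solves (\ref{MEC}).

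There is no real obstacle here; the only thing worth flagging is that one should verify the integrability at infinity before applying the substitution (the integrand decays like $\alpha^{-2}$, so this is immediate) and note that the $x$-independence of the final answer is not an accident but a reflection of translation invariance once the change of variables $u=2x-\alpha$ is performed.
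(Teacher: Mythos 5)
Your proposal is correct and follows essentially the same route as the paper: compute $f(x)-f(x-\alpha)=\alpha(2x-\alpha)$ and $\partial_xf(x)-\partial_xf(x-\alpha)=2\alpha$, cancel the factor $\alpha^2$, and evaluate the resulting integral via $u=2x-\alpha$ to obtain $\rho^--\rho^+=c$. The added remarks on the removal of the principal value and integrability at infinity are fine but not needed beyond what the paper's computation already contains.
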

	\begin{proof} First we compute the differences
	\begin{equation*}
	\begin{split}
	&f(x)-f(x-\alpha)=\alpha(2x-\alpha),\\
	&\partial_xf(x)-\partial_xf(x-\alpha)=2\alpha,\\
	&\partial_tf=c.\\
	\end{split}
		\end{equation*}
	Then we substitute in the Muskat equation
	\begin{equation*}
		\begin{split}
			c&=\frac{\rho^--\rho^+}{2\pi}\int_{\mathbb R}\frac{2\alpha^2}{\alpha^2+\alpha^2(2x-\alpha)^2}d\alpha\\
			&=\frac{\rho^--\rho^+}{\pi}\int_{\mathbb R}\frac{1}{1+(2x-\alpha)^2}d\alpha\\
			&=\frac{\rho^--\rho^+}{\pi}\int_{\mathbb R}\frac{1}{1+u^2}du,\quad u=2x-\alpha\\
			&=\rho^--\rho^+.
			\end{split}			
	\end{equation*}
	\end{proof}
\noindent For renormalization we set $\rho^--\rho^+=2\pi.$ The function $f(x,t)=x^2+2\pi t$  solves the Muskat equation and is a parabola moving along the vertical axis as $t\to +\infty.$  We define  the difference $\delta_\alpha g$ and the slope $\Delta_\alpha g$ by 
\[\delta_\alpha g(x):=g(x)-g(x-\alpha)\quad\mbox{ and } \quad\Delta_\alpha g(x):=\frac{g(x)-g(x-\alpha)}{\alpha}.\] 
By substituting in the equation (\ref{MEC}) the function $h:=f+g,$ we see that $g$ satisfies 
    \begin{equation}\label{eq21}
        \begin{split}
    \frac{d}{d t}g(x)+2\pi&=PV\int_{\mathbb R} \frac{\partial_x\Delta_{\alpha}g(x)}{1+(\Delta_{\alpha} h(x))^2}\,d\alpha+PV\int_{\mathbb R}\frac{\partial_x\Delta_\alpha f(x)}{1+(\Delta_{\alpha}h(x))^2}\,d\alpha.\\
    \end{split}
    \end{equation}
By the definition of $f$ we have 
\[2\pi=\int_{\mathbb R}\frac{\partial_x\Delta_{\alpha}f}{1+(\Delta_{\alpha}f)^2}\,d\alpha.\]
Thus adding the term $2\pi$ to the right side of (\ref{eq21}), we obtain the following  equation
    \[\frac{d}{dt}g(x)=PV\int_{\mathbb R}\frac{\partial_x\Delta_\alpha g}{1+(\Delta_\alpha h)^2}\,d\alpha+PV\int_{\mathbb R} \Delta_\alpha g\frac{(-2)(\Delta_\alpha h+\Delta_\alpha f)}{(1+(\Delta_\alpha h)^2)(1+(\Delta_\alpha f)^2)}\,d\alpha.\]
 If we define the kernels
    \begin{equation}\label{kernelskg}  K(x,\alpha):=\frac{1}{1+(\Delta_\alpha h)^2},\quad  G(x,\alpha):=-2\frac{\Delta_\alpha h+\Delta_\alpha f}{(1+(\Delta_\alpha h)^2)(1+(\Delta_\alpha f)^2)}.\end{equation}
Then (\ref{eq21}) is equivalent to the equation
    \begin{equation}\label{equation_g}\frac{d}{dt}g(x,t)=PV\int_{\mathbb R}\partial_x\Delta_\alpha g(x) K(x,\alpha)\,d\alpha+PV\int_{\mathbb R}\Delta_\alpha g(x) G(x,\alpha)\,d\alpha .\end{equation}
    Thus, our task is proving local existence of (\ref{equation_g}) with an initial data $g(x,0)=g_0(x)\in H^s(\mathbb R).$ We observe that the kernels $K(x,\alpha)$ and $G(x,\alpha)$ explicitly depend on  the variable $x$ which represents a significant difference from the classic Muskat equation (\ref{MEC}). To control this type of terms, we deal with the Hilbert transforms of  rational functions.   We define  $Hf$ the Hilbert transform and $H_{\abs{\alpha}<1}f$ the truncated Hilbert transform  by
\[Hf(x)=\frac{1}{\pi}PV\int_{\mathbb R}\frac{f(x-y)}{y}dy,\quad H_{\abs{\alpha}<1}f(x)=\frac{1}{\pi}PV\int_{\abs{\alpha}<1}\frac{f(x-y)}{y}dy.\]
Additionally,  we will use the fact that the truncated Hilbert transform is a bounded operator from $L^2(\mathbb R)$ to $L^2(\mathbb R).$ We  also define the operator $\Lambda f:=H\partial_x f.$
\noindent Finally, we define the following norms
\[\norm{f}_{C^k}=\sup_{x\in \mathbb R}\max_{j\geq k}\big|\partial_x^kf(x)\big|,\]
\[\norm{f}_{L^\infty}=\esssup_{x\in \mathbb R}\abs{f(x)},\]
and denote by $D(x,\alpha)$ the difference of kernels
  \begin{equation}\label{diff_of_kernels}D(x,\alpha):= K(x,\alpha)-K(x,0).\end{equation}
\section{Energy estimates}\label{energy}
In this section we  obtain the energy estimate for the function $g.$ We present two main lemmas. \cref{mainlemma1} corresponds to the lower order  derivative terms, while  \cref{secondmain_lemma} deals with the highest derivative terms.  Let $s$ be an integer, we consider the energy of the function $g$   as the norm in the Sobolev space $H^s(\mathbb R),$ 
\[E(t)=\frac{1}{2}\norm{g}_{L^2}^2(t)+\frac{1}{2}\norm{\partial_x^sg}_{L^2}^2(t).\]
In order to prove local existence of solutions in $H^s(\mathbb R)$  we need an estimate for the evolution in time for the energy $E(t).$ In our case, the estimate will be in polynomial form, that is 
\[\frac{d}{dt}E(t)\leq c\,(E(t)+E(t)^2+\cdots+E(t)^\ell) \]
for a large  integer $\ell.$ This bound will suffice to prove that the energy of the solution is uniformly bounded in $H^s(\mathbb R)$ up to some time $T=T(\norm{g_0}_{H^s})>0.$  We start by controlling the evolution of the  $L^2(\mathbb R)$ norm of $g.$
\begin{lemma}\label{mainlemma1} Let $g\in H^s(\mathbb R)$  with $s\geq 3,$ then
      \begin{equation}
        \label{one_eq}
      \frac{1}{2}\frac{d}{dt}\norm{g}_{L^2}^2(t)\leq c\,\Big(\norm{g}_{H^s}^2+\dots+\norm{g}_{H^s}^5\Big).
    \end{equation}
    \end{lemma}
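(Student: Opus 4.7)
The plan is to test equation (\ref{equation_g}) against $g$ in $L^2(\mathbb R)$ and split the right-hand side into two pieces,
\[
\frac{1}{2}\frac{d}{dt}\norm{g}_{L^2}^2 = I_1+I_2,\qquad
I_1=\int_{\mathbb R} g(x)\,PV\!\int_{\mathbb R}\partial_x\Delta_\alpha g(x)\,K(x,\alpha)\,d\alpha\,dx,\qquad
I_2=\int_{\mathbb R} g(x)\,PV\!\int_{\mathbb R}\Delta_\alpha g(x)\,G(x,\alpha)\,d\alpha\,dx,
\]
and then estimate each piece polynomially in $\norm{g}_{H^s}$.

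For $I_1$, the idea is to use the splitting $K(x,\alpha)=K(x,0)+D(x,\alpha)$ from (\ref{diff_of_kernels}). Since $K(x,0)$ does not depend on $\alpha$, the first piece reduces to an $x$-pointwise Hilbert transform: writing $\partial_x\Delta_\alpha g(x)=\bigl(\partial_x g(x)-\partial_x g(x-\alpha)\bigr)/\alpha$ and using $PV\int d\alpha/\alpha=0$, one gets $PV\!\int\partial_x\Delta_\alpha g\,d\alpha=-\pi\Lambda g(x)$, so
\[
\int g(x)K(x,0)\bigl(-\pi\Lambda g(x)\bigr)dx
\]
is bounded by $\pi\norm{K(\cdot,0)}_{L^\infty}\norm{g}_{L^2}\norm{\Lambda g}_{L^2}\le C\norm{g}_{H^1}^2$, using $K(x,0)\le 1$ trivially and boundedness of $\Lambda$ on $H^s$. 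For the $D$-piece, I would invoke the kernel estimates of section \ref{KernelBounds}: the expected outcome is that $D(x,\alpha)$ carries enough smallness in $\alpha$ near the origin, and enough decay for large $|\alpha|$, that the double integral $\iint |g(x)|\,|\partial_x\Delta_\alpha g(x)|\,|D(x,\alpha)|\,d\alpha\,dx$ is controlled by a polynomial in $\norm{g}_{H^s}$ via Cauchy–Schwarz, Fubini, and the boundedness (including the truncated version) of the Hilbert transform on $L^2$.

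For $I_2$, the integrand does not contain $x$-derivatives of $g$, so one only needs $L^\infty$ or $L^2$ bounds on $G(x,\alpha)$ together with control of $\Delta_\alpha g$. I would split the $\alpha$-integration into the near-zone $\{|\alpha|<1\}$ and the far-zone $\{|\alpha|\ge 1\}$. On the near-zone, $|\Delta_\alpha g|\le\norm{\partial_x g}_{L^\infty}$ and the pointwise bound on $G$ from section \ref{KernelBounds} makes the inner integral absolutely convergent with a uniform $x$-bound. On the far-zone, $\Delta_\alpha g(x)=g(x)/\alpha-g(x-\alpha)/\alpha$, the denominators in $G$ produce decay in $\alpha$ and in $x$ (through $\Delta_\alpha f=2x-\alpha$), and the contribution $\int g(x) g(x-\alpha)/\alpha\,d\alpha$ can again be closed by a Hilbert-transform bound. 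Throughout, I would use Sobolev embeddings $\norm{\partial_x^kg}_{L^\infty}\le C\norm{g}_{H^s}$ for $k\le s-1$, which is why five powers of $\norm{g}_{H^s}$ are allowed on the right.

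The main obstacle is not the algebra of testing against $g$ but the fact that both kernels depend explicitly on $x$ and that this dependence degenerates at infinity: $K(x,0)=(1+(2x+\partial_x g)^2)^{-1}$ vanishes as $|x|\to\infty$, so the classical parabolic smoothing structure of the Muskat energy method is lost, and we are forced to rely on the pointwise/integrated kernel bounds of section \ref{KernelBounds} that quantify precisely how the $x$-growth of $\Delta_\alpha f=2x-\alpha$ interacts with $\Delta_\alpha g$ in both $K$ and $D$. The whole proof of \cref{mainlemma1} is therefore a careful bookkeeping exercise that, at the $L^2$ level, only needs boundedness rather than coercivity of these kernels, so a direct absolute-value estimate after the $K(x,0)$-split is expected to suffice.
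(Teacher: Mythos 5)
Your overall skeleton (testing (\ref{equation_g}) against $g$, extracting the $K(x,0)$ piece through a Hilbert transform, and leaning on the kernel bounds of section \ref{KernelBounds}) matches the paper's, but your treatment of the difference $D(x,\alpha)=K(x,\alpha)-K(x,0)$ has a genuine gap. You assert that $D$ has ``enough decay for large $\abs{\alpha}$'' so that a direct absolute-value estimate of $\iint \abs{g}\,\abs{\partial_x\Delta_\alpha g}\,\abs{D}\,d\alpha\,dx$ closes the argument. This is false: for fixed $x$, $K(x,\alpha)\to 0$ as $\abs{\alpha}\to\infty$ (since $\Delta_\alpha h\approx 2x-\alpha$), so $D(x,\alpha)\to -K(x,0)$ and has no decay in $\alpha$ whatsoever. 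Concretely, the contribution of the term $\partial_xg(x)/\alpha$ in $\partial_x\Delta_\alpha g$ against the $K(x,0)$ component of $D$ is, in absolute value, $\abs{g(x)}\abs{\partial_xg(x)}K(x,0)\int_{\abs{\alpha}>1}\abs{\alpha}^{-1}d\alpha$, which diverges logarithmically; the cancellation you discard is exactly what kills it ($PV\int_{\abs{\alpha}>1}\alpha^{-1}K(x,0)\,d\alpha=0$ under the symmetric truncation). This is why the paper uses the $K(x,0)$-splitting only in the region $\abs{\alpha}<1$, where $\abs{D}\leq c(1+\norm{\partial_x^2g}_{L^\infty})\abs{\alpha}$; in the far region it keeps the full kernel and, for the term carrying $\partial_xg(x)$, uses the $L^\infty$ bound on $PV\int\alpha^{-1}K(\cdot,\alpha)\,d\alpha$ of \cref{aux_main_lemma1} (which itself rests on PV cancellation and the Hilbert transform of rational functions, \cref{hilbert_truncated}), while for the term carrying $\partial_xg(x-\alpha)$ it applies Cauchy--Schwarz in $\alpha$ with the weight $\alpha^{-2}$ together with $\int_{\mathbb R}K(x,\alpha)^2dx\leq c(1+\norm{\partial_xg}_{L^\infty})$ from \cref{main_prop_9}. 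Your argument is repairable by restricting the $D$-splitting to $\abs{\alpha}<1$ and reverting to the PV/Cauchy--Schwarz treatment for $\abs{\alpha}>1$, but as written the step ``a direct absolute-value estimate after the $K(x,0)$-split suffices'' fails.

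A smaller imprecision: in the near-zone of your $I_2$ you bound the inner integral uniformly in $x$, which leaves you with $\int_{\mathbb R}\abs{g(x)}\,dx$, an $L^1$ quantity not controlled by $\norm{g}_{H^s}$. You should instead keep $\Delta_\alpha g$ in $L^2_x$ (e.g.\ via the slope formula (\ref{formula_slope_g}), as the paper does) and apply Cauchy--Schwarz in $x$ before integrating in $\alpha$, which yields the admissible bound $c\,\norm{g}_{L^2}\norm{\partial_xg}_{L^2}$. The far-zone of $I_2$ and your $K(x,0)$-Hilbert-transform computation for $I_1$ are fine modulo keeping track of the symmetric (PV) interpretation at infinity when you separate conditionally convergent pieces.
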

    \begin{proof}
    Taking the $L^2(\mathbb R)$ product of $g$ and $g_t,$ given by equation (\ref{equation_g}), we have 
      \begin{equation*}\setlength{\jot}{11pt}
        \begin{split}
            \frac{1}{2}\frac{d}{dt}\norm{g}_{L^2}^2(t)&=\int_{\mathbb R} g(x)\int_{\mathbb R}\partial_x\Delta_\alpha g(x)K(x,\alpha)\,d\alpha\,dx+ \int_{\mathbb R} g(x) \int_{\mathbb R} \Delta_\alpha g(x)G(x,\alpha)\,d\alpha\, dx\\
            &:=\mbox{I}+\mbox{II}.
        \end{split}
    \end{equation*}
   \noindent\emph{\color{blue}{Bound for \emph{I }}}: We use the definition of the slope $\Delta_\alpha g$ to split
   \begin{equation*}\setlength{\jot}{11pt}
                \begin{split}
                    \mbox{I}&=\int_{\mathbb R} g(x)\partial_xg(x)\int_{\mathbb R}\frac{1}{\alpha} K(x,\alpha )d\alpha dx-\int_{\mathbb R} g(x)\int_{\mathbb R}\frac{\partial_xg(x-\alpha)}{\alpha} K(x,\alpha) d\alpha dx\\
                    &:=A_1-A_2.
                \end{split}
            \end{equation*}
    Using  Cauchy-Schwarz inequality and then estimates  (\ref{first_estimates_order_one}) from \cref{aux_main_lemma1},   we find that
            \begin{equation}\label{b_A_1}
                \begin{split}   
                    \abs{A_1}&\leq\norm{g}_{L^2}\norm{\partial_xg}_{L^2}\bigg\lVert PV\int_{\mathbb R} \frac{1}{\alpha} K(\,\cdot\, ,\alpha)d\alpha \bigg\rVert_{L^\infty}\\
                    &\leq c\,(1+\norm{g}_{C^2})^3\norm{g}_{L^2}\norm{\partial_xg}_{L^2}.
                \end{split}
            \end{equation}
            To deal with the term  $A_2,$ we split the integral  in the \emph{in} and \emph{out} parts. For the \emph{in} part we have the following decomposition
    \begin{equation}\label{in_part_A_2}
        \begin{split}
            A_2^{in}&=\int_{\mathbb R}g(x) H_{\abs{\alpha}<1} \partial_xg(x)K(x,0)dx\\
            &+\int_{\mathbb R}g(x)\int_{\abs{\alpha}<1}\frac{\partial_xg(x-\alpha)}{\alpha}\big[K(x,\alpha)-K(x,0)\big] d\alpha dx,\\
        \end{split}
    \end{equation}
    where we use the truncated Hilbert transform $H_{\abs{\alpha}<1}\partial_xg,$ and  add and subtract the  kernel at zero 
  \[K(x,0)=\frac{1}{1+(\partial_xh(x))^2}.\]
  Then applying Cauchy-Schwarz inequality, we obtain that 
    \begin{equation}\label{l2_estimate_1}\bigg|\int_{\mathbb R}g(x)H_{\abs{\alpha}<1} \partial_xg(x) K(x,0)dx\bigg|\leq \norm{g}_{L^2}\norm{ \partial_xg}_{L^2}. \end{equation}
 By direct calculation, together with the Fundamental Theorem of Calculus we deduce an estimate for the difference (\ref{diff_of_kernels})
  \begin{equation*}
    \abs{D(x,\alpha)}=\big|K(x,\alpha)-K(x,0)\big|\leq c\,(1+\norm{\partial_x^2g}_{L^\infty})\,\abs{\alpha}.
    \end{equation*}
   Hence, for the second integral in (\ref{in_part_A_2}), we observe that applying Cauchy-Schwarz inequality we derive the following estimate
    \begin{equation}\label{l2_estimate_2}\setlength{\jot}{12pt}
        \begin{split}
    \bigg|\int_{\mathbb R}g(x)\int_{\abs{\alpha}<1}\frac{\partial_xg(x-\alpha)}{\alpha} D(x,\alpha) d\alpha dx\bigg|& \\
    &\hspace{-3cm}\leq\int_{\mathbb R}\abs{g(x)}\int_{\abs{\alpha}<1}\frac{\abs{\partial_xg(x-\alpha)}}{\abs{\alpha}}\abs{D(x,\alpha)}d\alpha dx\\
    &\hspace{-3cm}\leq c\,(1+\norm{g}_{C^2})\int_{\abs{\alpha}<1}\int_{\mathbb R}\abs{g(x)}\abs{\partial_xg(x-\alpha)}dx d\alpha\\
        &\hspace{-3cm}\leq c\,(1+\norm{g}_{C^2})\norm{g}_{L^2}\norm{\partial_xg}_{L^2}.
        \end{split}
    \end{equation}
   For the \emph{out} part, we apply  Cauchy-Schwarz  inequality  respect to $x$ 
    \begin{equation*}\setlength{\jot}{12pt}
        \begin{split}
            \abs{A_2^{out}}=\bigg|\int_\mathbb{ R}g(x)\int_{\abs{\alpha}>1}\frac{\partial_xg(x-\alpha)}{\alpha}K(x,\alpha)d\alpha dx\bigg|&\\
            &\hspace{-4cm}\leq \norm{g}_{L^2}\bigg(\int_{\mathbb R}\bigg|\int_{\abs{\alpha}>1}\frac{\partial_xg(x- \alpha)}{\alpha} K(x,\alpha)d\alpha\bigg|^2dx\bigg)^{1/2}.\\
              \end{split}
    \end{equation*}
    Now  we use Cauchy-Schwarz  inequality  respect to $\alpha$
    \begin{equation*}
      \begin{split}
             \abs{A_2^{out}}&\hspace{0cm}\leq \norm{g}_{L^2}\bigg(\int_{\mathbb R}\bigg(\int_{\abs{\alpha}>1 }\partial_xg(x-\alpha)^2d\alpha\bigg) \bigg(\int_{\abs{\alpha}>1}\frac{1}{\alpha^2}K(x,\alpha)^2d\alpha\bigg)dx\bigg)^{1/2}\\
            &\hspace{0cm}\leq \norm{g}_{L^2}\norm{\partial_xg}_{L^2}\bigg(\int_{\abs{\alpha}>1}\frac{1}{\alpha^2}\int_{\mathbb R}K(x,\alpha)^2dxd\alpha\bigg)^{1/2}.
\end{split}
\end{equation*}
   The    estimate (\ref{bound_for_k_2r}) in \cref{main_prop_9} states that 
   \[\int_{\mathbb{R}} K(x,\alpha)^2 dx \leq c\,(1+\norm{\partial_xg}_{L^\infty}).\]
Therefore putting together the estimates (\ref{b_A_1}), (\ref{l2_estimate_1}), (\ref{l2_estimate_2})  and the inequalities for the \emph{out} part  we obtain the following bound
\[\abs{\mbox{ I }} \leq c\,(1+\norm{g}_{C^2})^3\norm{g}_{L^2}\norm{\partial_xg}_{L^2}.\]

\noindent\emph{\color{blue}{Bound for \emph{II }}}: For the \emph{in} part,  using  the Fundamental Theorem of Calculus  we have  the following formula for the slope
        \begin{equation}\label{formula_slope_g}\Delta_\alpha g=\int_0^1\partial_xg(x+(s-1)\alpha)ds,\end{equation}
        hence we obtain that 
        \begin{equation*}
            \begin{split}
                \mbox{II}^{in}&=\int_0^1\int_{\mathbb R}g(x)\int_{\abs{\alpha}<1}\partial_xg(x+(s-1)\alpha)G(x,\alpha)d\alpha dxds.
                \end{split}
        \end{equation*}
From the definition (\ref{kernelskg}) we deduce that 
    \[\abs{G(x,\alpha)}=\bigg|-2\frac{\Delta_\alpha h+\Delta_\alpha f}{(1+(\Delta_\alpha h)^2)(1+(\Delta_\alpha f)^2)}\bigg|\leq 2.\]
    Now applying the Cauchy-Schwarz inequality yields
    \begin{equation}\label{in_part_second_kernel}\abs{\mbox{II}^{in}}\leq 2\,\norm{g}_{L^2}\norm{\partial_xg}_{L^2}.\end{equation}
For the \emph{out} part, expanding $\Delta_\alpha g$  we split the integral  in two terms 
    \begin{equation*}    \setlength{\jot}{11pt}
        \begin{split}
            \mbox{II}^{out}&=\int_{\mathbb R}g(x)^2\int_{\abs{\alpha}>1}\frac{1}{\alpha}G(x,\alpha)d\alpha dx-\int_{\mathbb R}g(x)\int_{\abs{\alpha}>1}\frac{g(x-\alpha)}{\alpha}G(x,\alpha)d\alpha dx\\
            &:=A_3+A_4.
        \end{split}
    \end{equation*}
    In $A_3,$ we control the inner integral using the estimates (\ref{first_estimates_order_3})  from \cref{aux_main_prop3}, hence   
        \begin{equation*}\setlength{\jot}{10pt}
            \begin{split}
                \abs{A_3}&\leq \bigg|\int_{\mathbb R} g(x)^2\,\int_{\abs{\alpha}>1} \frac{1}{\alpha} G(x,\alpha)d\alpha \,dx\bigg|\\
                &\leq \norm{g}_{L^2}^2\bigg\lVert PV\int_{\abs{\alpha}>1}\frac{1}{\alpha } G(\,\cdot\,,\alpha)d\alpha\bigg\rVert_{L^\infty}\leq c\,(1+\norm{g}_{L^\infty})^2\norm{g}_{L^2}^2.
            \end{split}
        \end{equation*}       
Now for $A_4,$ we follow  the same technique used in $A_2^{out}.$ First, applying Cauchy-Schwarz inequality first respect to $x$  and  then respect to $\alpha,$ we deduce that 
    \begin{equation*}
        \begin{split}
            \abs{A_4}&\leq\norm{g}_{L^2}^2\bigg(\int_{\abs{\alpha}>1}\frac{1}{\alpha^2}\int_{\mathbb R}G(x,\alpha)^2dxd\alpha\bigg)^{1/2}.\\
        \end{split}
    \end{equation*}
    The estimate  (\ref{bound_for_g_2r}) in \cref{main_prop_10},  says that 
    \[\int_{\mathbb R}G(x,\alpha)^2dx\leq c\,(1+\norm{\partial_xg}_{L^\infty})^3.\]
 Then the  last inequality and (\ref{in_part_second_kernel}) conclude the proof
    \[\abs{\mbox{ II }}\leq  c\,(1+\norm{g}_{C^1})^2\norm{g}_{H^1}^2. \]\end{proof}
Now we move to the second part of the energy, which involves the  derivative of order $s$ of $g.$  We will prove  the following lemma.
\begin{lemma}\label{secondmain_lemma}
          Let $g\in H^s(\mathbb R)$ with $s\geq 3,$ then
        \begin{equation}\label{main_ineq_2}\frac{1}{2}\frac{d}{dt}\norm{\partial_x^3g}_{L^2}^2(t)\leq c\,\Big(\norm{g}_{H^s}^2+ \dots +\norm{g}_{H^s}^5\Big).\end{equation}
\end{lemma}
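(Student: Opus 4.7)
The plan is to differentiate equation (\ref{equation_g}) three times (the argument for general $s\ge 3$ is identical with $\partial_x^3$ replaced by $\partial_x^s$), multiply by $\partial_x^3 g$ and integrate over $\mathbb R$. Denoting the two integrals in the right-hand side of (\ref{equation_g}) by $\mathrm{I}[g]$ and $\mathrm{II}[g]$, Leibniz's rule distributes the three derivatives between the slope factor ($\partial_x\Delta_\alpha g$ or $\Delta_\alpha g$) and the kernel ($K$ or $G$), yielding for each a top-order piece in which all derivatives fall on the slope plus commutator pieces in which at least one derivative falls on the kernel. Each piece has to be estimated by a polynomial of degree at most five in $\norm{g}_{H^s}$, so as to match the bound from \cref{mainlemma1}.

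For the top-order piece of $\mathrm{I}$ I follow the strategy used for $A_2$ in \cref{mainlemma1}. Split the $\alpha$-integral at $\abs{\alpha}=1$. On the in-part, add and subtract $K(x,0)$: the $K(x,0)$ contribution recombines, after an integration by parts in $x$, into a nonpositive expression of parabolic type involving $K(x,0)$, $\partial_x^3 g$ and $\Lambda\partial_x^3 g$ (which is dropped), plus a commutator carrying $\partial_x K(x,0)$ that is controlled in $L^\infty$ by $\norm{g}_{C^2}$-norms. The remaining contribution from $D(x,\alpha)/\alpha$ is absorbed via the Lipschitz estimate $\abs{D(x,\alpha)}\le c(1+\norm{\partial_x^2 g}_{L^\infty})\abs{\alpha}$. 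The out-part is handled by two successive applications of Cauchy--Schwarz, first in $x$ and then in $\alpha$, combined with the estimate $\int K(x,\alpha)^2\,dx\le c(1+\norm{\partial_x g}_{L^\infty})$ from \cref{main_prop_9}. The top-order piece of $\mathrm{II}$ is treated analogously, using $\abs{G(x,\alpha)}\le 2$ on the in-part and $\int G(x,\alpha)^2\,dx\le c(1+\norm{\partial_x g}_{L^\infty})^3$ from \cref{main_prop_10} on the out-part.

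The commutator pieces are the main technical burden. When $j\ge 1$ derivatives fall on $K(x,\alpha)$ or $G(x,\alpha)$ one obtains a sum of rational expressions in $\Delta_\alpha\partial_x^\ell g$ for $\ell\le 3$, powers of $(1+(\Delta_\alpha h)^2)^{-1}$ and $(1+(\Delta_\alpha f)^2)^{-1}$, and factors involving $\Delta_\alpha f = 2x-\alpha$ which encode the explicit $x$-dependence introduced by the parabolic background. The derivatives $\partial_x^\ell g$ with $\ell\le 2$ are controlled in $L^\infty$ by the Sobolev embedding $H^s\hookrightarrow C^2$ valid for $s\ge 3$, becoming harmless polynomial prefactors in $\norm{g}_{H^s}$. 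After another in/out split the factor $\abs{\alpha}$ carried by each $\delta_\alpha$ cancels the apparent singularity of $1/\alpha$ in the in-part, while the denominators $(1+(\Delta_\alpha f)^2)^{-1}$ supply the decay in $x$ and $\alpha$ required to close the estimates in the out-part via the kernel bounds from \cref{KernelBounds}.

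The principal difficulty is the top-order in-part: since $K(x,0)=1/(1+(2x+\partial_x g)^2)$ degenerates at infinity --- this is the loss of the Rayleigh--Taylor condition emphasized in the introduction --- the parabolic term is not uniformly elliptic and cannot be used to gain regularity or to absorb commutator errors. One must therefore simply discard the nonpositive parabolic contribution and control every other piece by polynomials of $\norm{g}_{H^s}$. This is feasible precisely because the same decay of $K(x,0)$ that destroys uniform parabolicity also makes $\partial_x^k K(x,0)$ decay at infinity and hence integrable against $L^2$ quantities, so the commutators $[\partial_x^3, K(x,0)\,H_{\abs{\alpha}<1}]\partial_x g$ can be bounded by polynomial expressions in $\norm{g}_{C^2}$ times $\norm{\partial_x^3 g}_{L^2}\norm{g}_{H^s}$. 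Summing all such contributions yields (\ref{main_ineq_2}).
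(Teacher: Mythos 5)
Your overall architecture (Leibniz rule, in/out splitting at $\abs{\alpha}=1$, adding and subtracting $K(x,0)$, double Cauchy--Schwarz for the out-parts combined with the $L^2_x$ kernel bounds of \cref{main_prop_9} and \cref{main_prop_10}) is the same as the paper's, but your treatment of the most singular term has a genuine gap. The weighted term $-\int_{\mathbb R}K(x,0)\,\partial_x^3g\,\Lambda\partial_x^3g\,dx$ is \emph{not} a nonpositive quantity that can simply be dropped: for a variable weight $K(x,0)>0$ the quadratic form $\int K\,f\,\Lambda f\,dx$ has no sign (the symmetrized operator $K\Lambda+\Lambda K$ with $K>0$ need not be nonnegative). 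The paper handles it with the C\'ordoba--C\'ordoba pointwise inequality $f\,\Lambda f\ge\tfrac12\Lambda(f^2)$, which together with $K(x,0)>0$ and the self-adjointness of $\Lambda$ reduces the term to $-\tfrac12\int\Lambda K(x,0)\,[\partial_x^3g]^2dx$, and this is then controlled by the nonlocal bound $\norm{\Lambda K(\cdot,0)}_{L^\infty}\le c\,(1+\norm{g}_{C^{2,\delta}})$ of \cref{lambdak0}. Note that the object you must bound is $\Lambda K(x,0)$, not $\partial_x K(x,0)$, and its $L^\infty$ control genuinely uses the H\"older seminorm of $\partial_x^2g$ (available through $H^3\hookrightarrow C^{2,1/2}$); a ``commutator carrying $\partial_x K(x,0)$ controlled by $\norm{g}_{C^2}$'' does not deliver this, and your proposed commutator $[\partial_x^3,K(x,0)H_{\abs{\alpha}<1}]\partial_xg$ even contains $\partial_x^3K(x,0)$, hence $\partial_x^4g$, which is not bounded by $\norm{g}_{C^2}$ at all.

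A second, related gap is your claim that the $D(x,\alpha)/\alpha$ remainder of the top-order in-part ``is absorbed via the Lipschitz estimate $\abs{D(x,\alpha)}\le c(1+\norm{\partial_x^2g}_{L^\infty})\abs{\alpha}$.'' That estimate sufficed in \cref{mainlemma1}, where the translated factor was $\partial_xg(x-\alpha)$, but here the factor is $\partial_x^4g(x-\alpha)$, which cannot be placed in $L^2$ against $\norm{g}_{H^3}$; the term does not close at this level. The paper first integrates by parts in $\alpha$ (using $\partial_\alpha\partial_x^3g(x-\alpha)=-\partial_x^4g(x-\alpha)$) to trade $\partial_x^4g$ for $\partial_x^3g$ and the kernel $\Phi(x,\alpha)=\partial_\alpha[D(x,\alpha)/\alpha]$, and then needs the refined bound $\abs{\Phi(x,\alpha)}\le c\,(1+\norm{g}_{C^{2,\delta}})^2\abs{\alpha}^{\delta-1}$ of \cref{aux_main_prop5} --- again a H\"older-type gain, not a $C^2$ bound --- plus, for the out-part, the $L_x^2$ estimate on $\partial_\alpha K$ from \cref{main_prop_11}. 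Similarly, the vanishing of the worst piece of the once-differentiated kernel term is obtained in the paper by the antisymmetry of $\zeta(x,y)=\partial_xK(x,x-y)/(x-y)$, not by a size estimate. Without these devices (pointwise C\'ordoba--C\'ordoba inequality plus the $\Lambda K(x,0)$ bound, the $\alpha$-integration by parts with the $\abs{\alpha}^{\delta-1}$ estimate, and the antisymmetry cancellation), the most singular contributions in \cref{secondmain_lemma} are not actually estimated, so the proposal as written does not prove (\ref{main_ineq_2}).
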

\begin{proof}
 We  take $s=3$ and  compute  $\partial_x^3g_t$  from the equation (\ref{equation_g}). We have two terms
    \begin{equation*}\setlength{\jot}{3ex}
        \begin{split}
    \frac{1}{2}\frac{d}{dt}\norm{\partial_x^3g}_{L^2}^2&=\int_\mathbb R\partial_x^3g(x)\partial_x^3\int_{\mathbb R} \partial_x\Delta_\alpha g(x) K(x,\alpha)d\alpha\,dx+\int_\mathbb R\partial_x^3g(x)\partial_x^3\int_\mathbb R \Delta_\alpha g (x)G(x,\alpha)d\alpha\,dx\\
    &:=\mbox{III}+\mbox{IV}.
        \end{split}
    \end{equation*}
    We use the Leibniz product rule to get the  next decomposition 
    \begin{equation*}
        \begin{split}
      \mbox{III}
            &:=J_1+3J_2+3J_3+J_4.
        \end{split}
    \end{equation*}
       The goal  is obtain a polynomial bound for each $J_i.$ We start by getting a bound for $J_1.$ \\

\noindent\emph{\color{blue}{Bound for $J_1$}}: This term is  the most singular  because four derivatives acting on $g$.  We expand   $\partial_x^4\Delta_\alpha g$ and   add and subtract the kernel at zero $K(x,0),$ we have 
  \begin{equation*}\setlength{\jot}{3ex}
    \begin{split}
      J_1&=\int_{\mathbb R}\partial_x^3g(x)\partial_x^4g(x)\int_{\mathbb R}\frac{1}{\alpha}K(x,\alpha)d\alpha dx-\int_{\mathbb R}K(x,0)\partial_x^3g(x)\int_{\mathbb R}\frac{\partial_x^4g(x-\alpha)}{\alpha}d\alpha dx\\
      &\hspace{0cm}-\int_{\mathbb R}\partial_x^3g(x-\alpha)\int_{\mathbb R}\frac{\partial_x^4g(x-\alpha)}{\alpha}\big[K(x,\alpha)-K(x,0)\big]d\alpha dx.
    \end{split}
   \end{equation*}
    Recall that the  kernel at zero is given by 
    \[K(x,0)=\frac{1}{1+(\partial_xh(x))^2}.\]
    Using 
      \[\partial_x^3g(x)\partial_x^4g(x)=\frac{1}{2}\partial_x[\partial_x^3g(x)]^2\] 
    and   integration by parts, we obtain that
      \[\int_{\mathbb R}\partial_x^3g(x)\partial_x^4g(x)\int_{\mathbb R}\frac{1}{\alpha}K(x,\alpha)d\alpha dx=-\frac{1}{2}\int_{\mathbb R}[\partial_x^3g(x)]^2\partial_x\int_{\mathbb R}\frac{1}{\alpha}K(x,\alpha)d\alpha dx.\]
      The fact that $H\partial_x=\Lambda$ implies 
  \begin{equation}\setlength{\jot}{3ex}\label{term_J11}
    \begin{split}
      J_1&=-\frac{1}{2}\int_{\mathbb R}\big[\partial_x^3g(x)\big]^2\int_{\mathbb R}\frac{1}{\alpha}\partial_x K(x,\alpha )d\alpha dx-\int_{\mathbb R}K(x,0)\partial_x^3g(x)\Lambda \partial_x^3g(x)dx\\
      &-\int_{\mathbb R}\partial_x^3g(x-\alpha)\int_{\mathbb R}\frac{\partial_x^4g(x-\alpha)}{\alpha}D(x,\alpha)d\alpha dx,
    \end{split}
  \end{equation}
  where  $D(x,\alpha)$ is  the difference  $K(x,\alpha)-K(x,0).$ Now we use  the C\'ordoba-C\'ordoba  pointwise inequality, see \cite{MR2032097}, then we obtain that
  \[\partial_x^3g(x)\Lambda \partial_x^3g(x)\geq \frac{1}{2}\Lambda [\partial_x^3g(x)]^2.\]
 Due to $K(x,0)>0,$ we get that 
  \begin{equation*}\setlength{\jot}{10pt}
    \begin{split}
      J_1&\leq -\frac{1}{2}\int_{\mathbb R}\big[\partial_x^3g(x)\big]^2\int_{\mathbb R}\frac{1}{\alpha}\partial_x K(x,\alpha )d\alpha dx-\frac{1}{2}\int_{\mathbb R}\Lambda K(x,0)[\partial_x^3g(x)]^2dx\\
      &-\int_{\mathbb R}\partial_x^3g(x-\alpha)\int_{\mathbb R}\frac{\partial_x^4g(x-\alpha)}{\alpha}D(x,\alpha)d\alpha dx.
    \end{split}
  \end{equation*}
  Using   the inequalities (\ref{second_estimates_order_one}) and (\ref{lambdak_0}) in \cref{aux_main_prop4} and \cref{lambdak0}, we conclude that the first two terms above are bounded. That is,  the $L^\infty(\mathbb R)$ norms  of the inner integral  and the  operator  $\Lambda K(x,0)$ are bounded
  \begin{equation*}
      \begin{split}
        &\bigg\lVert PV\int_{\mathbb R}\frac{1}{\alpha}\partial_x K(\,\cdot\,,\alpha)d\alpha\bigg\rVert_{L^\infty}\leq c\,(1+\norm{g}_{C^{2,\delta}})^2\\
      \end{split}\end{equation*}
and
  \begin{equation*}
      \begin{split}
        &\norm{\Lambda K(x,0)}_{L^\infty}\leq c\,(1+\norm{g}_{C^{2,\delta}}).
      \end{split}\end{equation*}
  In the last two inequalities we take $\delta=1/2$ because $H^3(\mathbb R)\hookrightarrow C^{2,1/2}(\mathbb R).$ \\
  
\noindent  It remains to get the bound for the  term with the difference $D(x,\alpha)$ in (\ref{term_J11}).  First, we note that by the chain rule  $\partial_\alpha\partial_x^3g(x-\alpha)=-\partial_x^4g(x-\alpha).$  From here, integration by parts yields 
  \begin{equation}\label{singular_term}
          \begin{split}
            -\int_{\mathbb R} \partial_x^3g(x)\int_{\mathbb R}\frac{\partial_x^4g(x-\alpha)}{\alpha}D(x,\alpha)d\alpha dx&=-\int_{\mathbb R}\partial_x^3g(x)\int_{\mathbb R}\partial_x^3g(x-\alpha)\partial_\alpha \bigg(\frac{D(x,\alpha)}{\alpha}\bigg)d\alpha dx.
        \end{split}
    \end{equation}
    Denote  \[\Phi(x,\alpha):= \partial_\alpha [D(x,\alpha)/\alpha]. \] 
We split the  integral  (\ref{singular_term}) into the \emph{in} and \emph{out} parts. For the \emph{in} part,  we observe that 
    \begin{equation*}\setlength{\jot}{10pt}
        \begin{split}
            \bigg|  \int_{\mathbb R}\partial_x^3g(x)\int_{\abs{\alpha}<1}\partial_x^3g(x-\alpha)\Phi(x,\alpha)d\alpha dx\bigg|&\\
            &\hspace{-5cm}\leq \int_{\mathbb R}\int_{\abs{\alpha}<1}\abs{\partial_x^3g(x)}\abs{\partial_x^3g(x-\alpha)}\big|\Phi(x,\alpha )\big|d\alpha dx\\
            &\hspace{-5cm}\leq \frac{1}{2}\int_{\mathbb R}\int_{\abs{\alpha }<1}\Big[\abs{\partial_x^3g(x)}^2+\abs{\partial_x^3g(x-\alpha )}^2\Big]\big|\Phi(x,\alpha)\big|d\alpha dx\\
            &\hspace{-5cm}\leq \frac{1}{2}\int_{\mathbb R}\abs{\partial_x^3g(x)}^2\int_{\abs{\alpha }<1}\big|\Phi(x,\alpha)\big|d\alpha dx+\frac{1}{2}\int_{\mathbb R}\int_{\abs{\alpha }<1}\abs{\partial_x^3g(x-\alpha)}^2\big|\Phi(x,\alpha)\big|d\alpha dx,
        \end{split}
    \end{equation*}
    where we have used Young's inequality 
    \[ab\leq \frac{1}{2}\abs{a}^2+\frac{1}{2}\abs{b}^2.\]
    Using estimate (\ref{second_order_k}) from \cref{aux_main_prop5}, we get 
    \begin{equation}\label{control_over_thediff}\Big|\Phi(x,\alpha)\chi_{\abs{\alpha}<1}(\alpha)\Big|\leq c\,(1+\norm{g}_{C^{2,\delta}})^2\abs{\alpha}^{\delta-1},\end{equation}
    which is  integrable near to the origin. For the second integral we change variables $\beta=\alpha$ and $y=x-\alpha$ to get 
    \[\int_{\mathbb R}\int_{\abs{\alpha }<1}\abs{\partial_x^3g(x-\alpha)}^2\big|\Phi(x,\alpha)\big|d\alpha dx=\int_{\mathbb R}\abs{\partial_y^3g(y)}^2\int_{\abs{\beta}<1}\abs{\Phi(y+\beta,\beta)}d\beta dy\]
and  we have the same control (\ref{control_over_thediff}) over $\abs{\Phi(y+\beta,\beta)}.$   Hence the \emph{in} part  is bounded
		\begin{equation}\label{bound_in_part_kernel_3}
			\bigg|-\int_{\mathbb R}\partial_x^3g(x-\alpha)\int_{\abs{\alpha}<1}\frac{\partial_x^4g(x-\alpha)}					{\alpha}D(x,\alpha)d\alpha dx\bigg|\leq c\,(1+\norm{g}_{C^{2,\delta}})^2\norm{\partial_x^3g}_{L^2}^2.
		\end{equation}
Now we focus on  the \emph{out} part. First, we note that 
\[\Big|\Phi(x,\alpha)\chi_{\abs{\alpha}>1}(\alpha)\Big|\leq \frac{\abs{D(x,\alpha)}}{\alpha^2}+\frac{\abs{\partial_\alpha K(x,\alpha)}}{\abs{\alpha}}.\]
Then we split in two parts. For the first part we have 
       \begin{equation}\label{J2_out_2}\setlength{\jot}{12pt}
        \begin{split}
            \bigg|\int_{\mathbb R}\partial_x^3g(x)\int_{\abs{\alpha}>1} \partial_x^3g(x-\alpha)\Phi(x,\alpha) d\alpha dx\bigg|\\
            &\hspace{-4cm}\leq \int_{\mathbb R}\abs{\partial_x^3g(x)}\int_{\abs{\alpha}>1}\abs{\partial_x^3g(x-\alpha)}\bigg|\frac{\partial_{\alpha}K(x,\alpha)}{\alpha}\bigg|d\alpha dx \\
            &\hspace{-4cm}+ \int_{\mathbb R}\abs{\partial_x^3g(x)}\int_{\abs{\alpha}>1}\abs{\partial_x^3g(x-\alpha)}\bigg|\frac{D(x,\alpha)}{\alpha^2}\bigg|d\alpha dx.
        \end{split}
    \end{equation}
    The second line of (\ref{J2_out_2}) can be bounded by applying  Cauchy-Schwarz inequality, first with respect to $x,$  and  then with respect to $\alpha.$  Then we obtain that 
    \begin{equation*}\setlength{\jot}{12pt}
        \begin{split}
        &\int_{\mathbb R}\abs{\partial_x^3g(x)}\int_{\abs{\alpha}>1}\abs{\partial_x^3g(x-\alpha)}\bigg|\frac{\partial_{\alpha}K(x,\alpha)}{\alpha}\bigg|d\alpha dx \\
        &\hspace{2cm}\leq \norm{\partial_x^3g}_{L^2}\bigg(\int_{\mathbb R}\norm{\partial_x^3g}_{L^2}^2\bigg(\int_{\abs{\alpha}>1}\frac{\partial_\alpha K(x,\alpha)^2}{\alpha^2}d\alpha\bigg)dx\bigg)^{1/2}\\
        &\hspace{2cm}\leq  \norm{\partial_x^3g}_{L^2}^2\bigg(\int_{\abs{\alpha}>1}\frac{1}{\alpha^2}\int_{\mathbb R}\partial_\alpha K(x,\alpha)^2 dxd\alpha\bigg)^{1/2}\\
        &\hspace{2cm}\leq c\,\norm{\partial_x^3g}_{L^2}^2(1+\norm{\partial_xg}_{L^\infty})^3.
        \end{split}
    \end{equation*}
In the last inequality we applied  the estimates   (\ref{bound_for_k_2rr}) of \cref{main_prop_11}.
 For the second term in the right hand side of (\ref{J2_out_2}),  we apply Cauchy-Schwarz inequality with respect to $x$ and then use  Minkowski's integral inequality.  Also we  note that the difference satisfies \[\abs{D(x,\alpha)}\leq 2.\] Thus
\begin{equation*}\setlength{\jot}{11pt}
        \begin{split}
        &\int_{\mathbb R}\abs{\partial_x^3g(x)}\int_{\abs{\alpha}>1}\abs{\partial_x^3g(x-\alpha)}\bigg|\frac{D(x,\alpha)}{\alpha^2}\bigg|d\alpha dx\\
        &\hspace{2cm}\leq \norm{\partial_x^3g}_{L^2}\bigg(\int_{\mathbb R}\bigg|\int_{\abs{\alpha}>1}\frac{\partial_x^3g(x-\alpha)}{\alpha^2} D(x,\alpha)d\alpha \bigg|^2dx\bigg)^{1/2}\\
        &\hspace{2cm}\leq \norm{\partial_x^3g}_{L^2}\int_{\abs{\alpha}>1}\bigg(\int_{\mathbb R}\bigg[\frac{\partial_x^3g(x-\alpha)}{\alpha^2}\bigg]^2dx\bigg)^{1/2}d\alpha\\
        &\hspace{2cm}\leq c\,\norm{\partial_x^3g}_{L^2}^2.
        \end{split}
    \end{equation*}
Therefore, by  joining the estimates for the \emph{out} part and (\ref{bound_in_part_kernel_3}), we deduce that 
    \begin{equation}\label{J1_bound}\abs{J_1}\leq c\,(1+\norm{g}_{C^{2,1/2}})^3\norm{g}_{H^3}^2.\end{equation}
    \emph{\color{blue}{Bound for $J_2$}}: The second term $J_2$ is similar to $J_1,$ expanding $\partial_x^3\Delta g$ we have 
        \begin{equation*}\setlength{\jot}{12pt}
            \begin{split}
                J_2&=\int_{\mathbb R}\partial_x^3g(x)^2\int_{\mathbb R}\frac{1}{\alpha}\partial_x K(x,\alpha)d\alpha dx-\int_{\mathbb R}\partial_x^3g(x)\int_{\mathbb R}\frac{\partial_x^3g(x-\alpha)}{\alpha}\partial_x K(x,\alpha) d\alpha dx.\\
            \end{split}
        \end{equation*}
For the last integral in $J_2$ the  change of variable $x-\alpha=y$ leads to
    \begin{equation*}\setlength{\jot}{12pt}
        \begin{split}
        \int_{\mathbb R}\partial_x^3g(x)\int_{\mathbb R}\frac{\partial_x^3g(x-\alpha)}{\alpha}\partial_x K(x,\alpha) d\alpha dx& =\int_{\mathbb R}\int_{\mathbb R}\partial_x^3g(x)\partial_y^3g(y)\frac{\partial_x K(x,x-y)}{x-y} dy dx.\end{split}
    \end{equation*}  
Define $\zeta(x,y)$ as the kernel
  \begin{equation*}
    \begin{split}
      \zeta(x,y)&:=\frac{\partial_xK(x,x-y)}{x-y}=\frac{-2}{x-y}\frac{\bigg(\displaystyle{\frac{h(x)-h(y)}{x-y}\bigg)\bigg(\frac{\partial_xh(x)-\partial_xh(y)}{x-y}}\bigg)}{\bigg(1+\bigg(\displaystyle{\frac{h(x)-h(y)}{x-y}}\bigg)^2\bigg)^2}.
  \end{split}
  \end{equation*}
We observe that $\zeta(x,y)=-\zeta(y,x),$ then change of variables $x=y$ and $y=x$ implies that 
    \begin{equation*}
        \begin{split}
        \int_{\mathbb R}\int_{\mathbb R}\partial_x^3g(x)\partial_y^3g(y)\zeta(x,y) dx dy&\\
        &\hspace{-2cm}=\int_{\mathbb R}\int_{\mathbb R}\partial_y^3g(y)\partial_x^3g(x)\zeta(y,x) dy dx\\
        &\hspace{-2cm}=-\int_{\mathbb R}\int_{\mathbb R}\partial_x^3g(x)\partial_y^3g(y)\zeta(x,y) dx dy.\\
        \end{split}
        \end{equation*}
Therefore the second integral  in $J_2$ is zero and the first integral  has the same  bound (\ref{J1_bound}) of $J_1$. That is
    \begin{equation}\label{bound_j2}
        \abs{J_2}\leq c\,(1+\norm{g}_{C^{2,1/2}})^3\norm{g}_{H^3}^2.
    \end{equation}
\noindent\emph{\color{blue}{Bound for $J_3$}}: We split in the \emph{in} and \emph{out} parts
	\begin{equation*}
		\begin{split}		
			J_3&=\int_{\mathbb R}\partial_x^3g(x)\int_{\abs{\alpha}<1}\partial_x^2\Delta_{\alpha}									g\partial_x^2K(x,\alpha)d\alpha dx+\int_{\mathbb R}\partial_x^3g(x)\int_{\abs{\alpha}>1}							\partial_x^2\Delta_{\alpha}g\partial_x^2K(x,\alpha)d\alpha dx\\
				&:=J_3^{in}+J_3^{out}.
		\end{split}
	\end{equation*}
Using the estimate   (\ref{second_order_K_2}) we have the following bound
\[\abs{\partial_x^2K(x,\alpha)}\leq c\,\big(1+\norm{\partial_x^2g}_{L^\infty}\big)^2+c\,\norm{g}_{C^{2,\delta}}\cdot\abs{\alpha}^{\delta-1},\]
where $\delta\in (0,1).$ We use the Fundamental Theorem of Calculus to obtain the following formula
		\begin{equation}\label{formula_taylor_third}
				\partial_x^2\Delta_\alpha g=\int_{0}^1\partial_x^3g(x+(s-1)\alpha)ds.
		\end{equation}
Then using (\ref{formula_taylor_third}) and Cauchy-Schwarz inequality with respect to $x$  we obtain that 
	\begin{equation}\setlength{\jot}{10pt}\label{j_3_in_bound}
        \begin{split}
            \abs{J_3^{in}}&\leq \int_0^1\int_{\mathbb R}\abs{\partial_x^3g(x)}\int_{\abs{\alpha}<1}								\abs{\partial_x^3g(x+(s-1)\alpha)}\abs{\partial_x^2K(x,\alpha)}d\alpha dx  ds\\
            &\leq c\,(1+\norm{g}_{C^{2,\delta}})^2\int_0^1\int_{\abs{\alpha}<1}(1+\abs{\alpha}^{\delta-1})						\int_{\mathbb R}\abs{\partial_x^3g(x)}\abs{\partial_x^3g(x+(s-1)\alpha)}dxd\alpha ds\\
            &\leq c\,(1+\norm{g}_{C^{2,\delta}})^2\norm{\partial_x^3g}_{L^2}^2.
        \end{split}
    \end{equation}
where as in the previous term $J_2$, we take $\delta=1/2.$  Now,  for the \emph{out} part expanding $\partial_x^2\Delta_\alpha g$ we have
    \begin{equation}\setlength{\jot}{12pt}\label{J3_out}
        \begin{split}
            J_3^{out}&=\int_{\mathbb R}\partial_x^3g(x)\partial_x^2g(x)\int_{\abs{\alpha}>1}\frac{1}{\alpha}\partial_x^2K(x,\alpha)d\alpha dx-\int_{\mathbb R}\partial_x^3g(x)\int_{\abs{\alpha}>1}\frac{\partial_x^2g(x-\alpha)}{\alpha}\partial_x^2K(x,\alpha)d\alpha dx.\\
        \end{split}
    \end{equation}
For the first integral in the right hand side of (\ref{J3_out})  we apply Cauchy-Schwarz inequality and use the estimate (\ref{second_estimates_order_one_far_zero}) of \cref{aux_main_prop7}. We deduce that 
    \begin{equation}\label{j3_out_bound}\setlength{\jot}{10pt}
        \begin{split}
            \bigg|\int_{\mathbb R}\partial_x^3g(x)\partial_x^2g(x)\int_{\abs{\alpha}>1}\frac{1}{\alpha}\partial_x^2K(x,\alpha)d\alpha dx\bigg|&\\
            &\hspace{-2cm}\leq \norm{\partial_x^3g}_{L^2}\norm{\partial_x^2g}_{L^2}\bigg\lVert \int_{\abs{\alpha}>1}\frac{1}{\alpha}\partial_x^2K(x,\alpha)d\alpha \bigg\rVert_{L^\infty}\\
            &\hspace{-2cm}\leq c\,(1+\norm{g}_{C^2})^2\norm{\partial_x^3g}_{L^2}\norm{\partial_x^2g}_{L^2}.
            \end{split}
        \end{equation}
For the second integral in the right hand side of (\ref{J3_out}), we observe
\begin{equation}\label{second_K}
    \begin{split}
        \partial_x^2K(x,\alpha)&=[\partial_x\Delta_{\alpha}h]^2B_1(x,\alpha)+ \partial_x^2\Delta_{\alpha} g B_2(x,\alpha),
    \end{split}
\end{equation}
where
\[B_1(x,\alpha):=-2K(x,\alpha)^2+8(\Delta_\alpha h)^2K(x,\alpha)^3\quad \mbox{and}\quad B_2(x,\alpha):=-2\Delta_{\alpha }hK(x,\alpha)^2.\]
Then expanding the sum in (\ref{second_K}) we obtain that
\[\int_{\mathbb R}\partial_x^3g(x)\int_{\abs{\alpha}>1}\frac{\partial_x^2g(x-\alpha)}{\alpha}\partial_x^2K(x,\alpha)d\alpha 				dx:=J_{3,1}^{out}+J_{3,2}^{out},\]
where
\[ J_{3,1}^{out}=\int_{\mathbb R}\partial_x^3g(x)\int_{\abs{\alpha}>1}\frac{\partial_x^2g(x-\alpha)}{\alpha}					[\partial_x\Delta_\alpha h]^2B_1(x,\alpha)d\alpha dx\]
and
\[ J_{3,2}^{out}=\int_{\mathbb R}\partial_x^3g(x)\int_{\abs{\alpha}>1}\frac{\partial_x^2g(x-\alpha)}{\alpha}					[\partial_x^2\Delta_\alpha g]B_2(x,\alpha)d\alpha dx.\]
 We notice that 
	  \[\abs{B_1(x,\alpha)}\leq 10\, K(x,\alpha),\]
which is square integrable with respect to $x,$ by  \cref{main_prop_9}. Using the Fundamental Theorem of Calculus  we deduce that 
	\begin{equation}\label{bound_for_partial_h}
		\abs{\partial_x\Delta_\alpha h}\leq 2\,(1+\norm{\partial_x^2g}_{L^\infty}).
	\end{equation}
Hence applying Cauchy-Schwarz inequality,  first respect to $x,$ then respect to $\alpha.$ We find that 
  \begin{equation*}\setlength{\jot}{12pt}
    \begin{split}
\abs{J_{3,1}^{out}}
      &\hspace{0cm}\leq \norm{\partial_x^3g}_{L^2}\bigg(\int_{\mathbb R}\bigg|\int_{ \abs{\alpha}>1} 					\frac{\partial_x^2g(x-\alpha)}{\alpha}[\partial_x\Delta_\alpha h]^2B_1(x,\alpha)d\alpha\bigg|						^2dx\bigg)^{1/2}\\
      &\hspace{0cm}\leq c\,(1+\norm{\partial_x^2g}_{L^\infty})^2\norm{\partial_x^3g}_{L^2}\norm{\partial_x^2g}			_{L^2}\bigg(\int_{\abs{\alpha}>1} \frac{1}{\alpha^2}\int_{\mathbb R} B_1(x,										\alpha)^2dxd\alpha\bigg)^{1/2}\\
		& \hspace{0cm}\leq c\,(1+\norm{g}_{C^2})^3\norm{\partial_x^2g}_{L^2}\norm{\partial_x^3g}_{L^2}.
    \end{split}
  \end{equation*}
The second  term $J_{3,2}^{out}$  has a similar bound.  In that case we use the following bounds
 		 \[\abs{B_2(x,\alpha)}\leq 2K(x,\alpha)\quad\mbox{ and }\quad\abs{\partial_x^2\Delta_{\alpha}g}\leq 2\norm{\partial_x^2g}_{L^\infty}\abs{\alpha}^{-1}.\]
 Therefore by joining the estimates (\ref{j_3_in_bound}) and (\ref{j3_out_bound}) and the inequalities for $J_{3,1}^{out}$ and $J_{3,2}^{out}$ we conclude that 
    \begin{equation}\label{J3_bound}
        \abs{J_3}\leq c\,(1+\norm{g}_{C^{2,1/2}})^3\norm{g}_{H^3}^2.
    \end{equation}
\noindent\emph{\color{blue}{Bound for $J_4$}}: We notice that

	\begin{equation}\label{third_derivative}
		\partial_x^3K(x,\alpha)=\partial_x\Delta_{\alpha}hB_3(x,\alpha)+															\partial_x^2\Delta_{\alpha}gB_4(x,\alpha)+\partial_x^3\Delta_{\alpha}gB_5(x,\alpha),			
	\end{equation}
where
	\begin{equation}\label{terms_bs}\setlength{\jot}{10pt}
		\begin{split}
			&B_3(x,\alpha):=\Big[24(\Delta_\alpha h)K(x,\alpha)^3-48(\Delta_\alpha h)^3 K(x,\alpha)^4\Big](\partial_x \Delta_\alpha h)^2,\\
			&B_4(x,\alpha):=3\big[-2K(x,\alpha)^3+8(\Delta_{\alpha}h)^2K(x,\alpha)^4\big]					(\partial_x\Delta_{\alpha}h),\\
 &B_5(x,\alpha):=-2\Delta_{\alpha }h K(x,\alpha)^2.
		\end{split}
	\end{equation}
Then expanding the sum in (\ref{third_derivative}) we decompose $J_4:= J_{4,1}+J_{4,2}+J_{4,3}$ with

	\begin{equation*}
		\begin{split}
			J_{4,1}&=\int_{\mathbb R}\partial_x^3g(x)\int_{\mathbb R}(\partial_x\Delta_{\alpha}g) 
				\partial_x\Delta_\alpha h B_3(x,\alpha) d\alpha dx,\\
			J_{4,2}&=\int_{\mathbb R}\partial_x^3g(x)\int_{\mathbb R}(\partial_x\Delta_{\alpha}g)
					\partial_x^2\Delta_\alpha g B_4(x,\alpha) d\alpha dx,\\
			J_{4,3}&=\int_{\mathbb R}\partial_x^3g(x)\int_{\mathbb R}(\partial_x\Delta_{\alpha}g )
					\partial_x^3\Delta_\alpha g B_5(x,\alpha) d\alpha dx.
		\end{split}
 	\end{equation*}
Using  the Fundamental Theorem of Calculus we have the following formula
	\begin{equation}\label{taylor_bound_order_1}\partial_x\Delta_{\alpha}g=\int_{0}^{1}\partial_x^2g(x+(s-1)\alpha)ds.\end{equation}
Notice 
	 \begin{equation}\label{bound_b3}
		\begin{split}
			&\abs{B_3(x,\alpha)}\leq c\,(1+\norm{\partial_x^2g}_{L^\infty})^2,
		\end{split}	
	\end{equation}
then the estimate (\ref{bound_for_partial_h})  together with   the Cauchy-Schwarz inequality yields to the following bound
  \begin{equation}\setlength{\jot}{10pt}\label{bound_j41in}
    \begin{split}
    \abs{ J_{4,1}^{in}}& \leq c\,\int_0^1\int_{\abs{\alpha}<1}\int_{\mathbb R}\abs{\partial_x^3g(x)}\abs{\partial_x^2g(x+(s-1)\alpha)}\abs{\partial_x\Delta_\alpha  h}\abs{B_3(x,\alpha)}dxd\alpha ds\\
      &\hspace{0cm}\leq c\,(1+\norm{\partial_x^2g}_{L^\infty})^3\norm{\partial_x^3g}_{L^2}\norm{\partial_x^2 g}_{L^2}.
    \end{split}
  \end{equation}
 For the \emph{out} part, we expand $\partial_x\Delta_\alpha g$ and  take $J_{4,1}^{out}:=L_1+L_2,$ where
	\begin{equation*}
		\begin{split}		
			L_1&=\int_{\mathbb R}\partial_x^3g(x)\partial_xg(x)\int_{\abs{\alpha}>1}\frac{1}{\alpha}											\partial_x\Delta_\alpha h B_3(x,\alpha)d\alpha dx,\\
			L_2&=-\int_{\mathbb R}\partial_x^3g(x)\int_{\abs{\alpha}>1}\frac{\partial_xg(x-\alpha)}{\alpha}									\partial_x\Delta_{\alpha}h B_3(x,\alpha)d\alpha dx.
		\end{split}
	\end{equation*}
Now  we expand the sum $\partial_x\Delta_{\alpha}h=2+\partial_x\Delta_\alpha g$ and  decompose further   $L_1:=S_1+S_2$ for  
	\begin{equation*}\setlength{\jot}{10pt}
		\begin{split}		
			S_1&=2\int_{\mathbb R}\partial_x^3g(x)\partial_xg(x)\,\bigg\{\,\int_{\abs{\alpha}>1}\frac{1}{\alpha}B_3(x,						\alpha)d\alpha\bigg\} \,dx=2\int_{\mathbb R}\partial_x^3g(x)\partial_xg(x)\eta(x)dx,\\
			S_2&=\int_{\mathbb R}\partial_x^3g(x)\partial_xg(x)\int_{\abs{\alpha}>1}											\frac{\partial_x\Delta_{\alpha }g}{\alpha} B_3(x,\alpha)d\alpha dx.\\			
		\end{split}
	\end{equation*}
In order to get a bound of $S_1$ we need an estimate for $\eta(x)$. First we  observe from (\ref{terms_bs}) that 
	\[B_3(x,\alpha)= \gamma(x,\alpha)\Big[4+4\partial_x\Delta_{\alpha}g+(\partial_x\Delta_\alpha g)^2\Big],\]
where 
	\begin{equation}
		\label{def_gamma}	
		\gamma(x,\alpha):=24(\Delta_\alpha h)K(x,\alpha)^3-48(\Delta_\alpha h)^3 K(x,\alpha)^4.
	\end{equation}
We expand $B_3(x,\alpha )$ and decompose $\eta(x):=4\eta_1(x)+\eta_2(x)$ for
		\begin{equation}
		\begin{split}\label{control_psi}
			&\eta_1(x)=PV\int_{\abs{\alpha}>1}\frac{1}{\alpha}\gamma(x,\alpha)d\alpha,\\
			&\eta_{2}(x)=PV\int_{\abs{\alpha}>1}\frac{1}{\alpha}\gamma(x,\alpha)(4\partial_x\Delta_\alpha g+								(\partial_x\Delta_\alpha g)^2)d\alpha.\\
		\end{split}
	\end{equation}
	We derive  the bound for $\eta_2$ from the estimate  $\abs{\gamma(x,\alpha)}<c$ and the following inequality
	\[\abs{4\partial_x\Delta_{\alpha}g+(\partial_x\Delta_{\alpha}g)^2}\leq 8\frac{\norm{\partial_xg}_{L^\infty}}{\abs{\alpha}}+4\frac{\norm{\partial_xg}_{L^\infty}^2}{\abs{\alpha}^2}.\]
Hence 
	\[\abs{\eta_2(x)}\leq c\,\,(\norm{\partial_xg}_{L^\infty}+\norm{\partial_x g}_{L^\infty}^2).\]
While for $\eta_1,$ the estimate (\ref{bound_outside}) in \cref{bounds_outside_kn} states that 
	\begin{equation*}
		\abs{\eta_1(x)}\leq c\,(1+\norm{g}_{L^\infty})^3.
	\end{equation*}
By joining the inequalities for $\eta_{1}$ and $\eta_{2}$  we obtain the next estimate 
	\begin{equation*}
\norm{\eta}_{L^\infty}\leq c\,(1+\norm{g}_{C^1})^3.\end{equation*}
Thus, applying the Cauchy-Schwarz inequality we complete the estimate for $S_1.$ We have that 
	\begin{equation*}
		\begin{split}
			\abs{S_{1}}&\leq 4\int_{\mathbb R}\abs{\partial_x^3g(x)}\abs{\partial_xg(x)}\abs{\eta(x)} dx\leq c\,(1+\norm{g}_{C^1})^3\norm{\partial_xg}_{L^2}							\norm{\partial_x^3g}				_{L^2}.
		\end{split}
	\end{equation*}
The inner integral in $S_2$ is easily bounded by using  the estimate (\ref{bound_b3}), we conclude that 
	\begin{equation*}\setlength{\jot}{12pt}
		\begin{split}
			\bigg|\int_{\abs{\alpha}>1}\frac{\partial_x\Delta_{\alpha}g}{\alpha}B_3(x,					\alpha)d\alpha \bigg|&\leq c\,(1+\norm{\partial_x^2g}												_{L^\infty})^3\int_{\abs{\alpha}>1}\abs{\alpha}^{-2}d\alpha\\
			&\leq c\,(1+	\norm{\partial_x^2g}_{L^\infty})^3.
		\end{split}	
	\end{equation*}
Then, similarly to $S_1,$ we apply the Cauchy-Schwarz inequality and use the previous bound to obtain that
	\[\abs{S_2}\leq c\,(1+\norm{g}_{C^2})^3\norm{\partial_xg}_{L^2}\norm{\partial_x^3g}_{L^2}.\]
The last inequality completes the estimate for $L_1.$  Now we move to $L_2,$ analogously we take $L_2:=S_3+S_4,$ where
	\begin{equation*}\setlength{\jot}{10pt}
		\begin{split}		
			S_3&=\int_{\mathbb R}\partial_x^3g(x)\int_{\abs{\alpha}>1}\frac{2\partial_xg(x-\alpha)}{\alpha} B_3(x,\alpha)d\alpha dx,\\
			S_4&=\int_{\mathbb R}\partial_x^3g(x)\int_{\abs{\alpha}>1}											\frac{\partial_xg(x-\alpha)}{\alpha}\partial_x\Delta_{\alpha }g B_3(x,\alpha)d\alpha dx.\\			
		\end{split}
	\end{equation*}
Notice  that $\abs{\gamma(x,\alpha)}<c\,K(x,\alpha).$ Using the bound  (\ref{bound_b3})  we  derive the following estimate
		\[\abs{B_3(x,\alpha)}\leq 4cK(x,\alpha)+4c\frac{\norm{\partial_xg}_{L^\infty}}{\abs{\alpha}}										+c\frac{\norm{\partial_xg}_{L^\infty}^2}{\abs{\alpha}^2}.\]
The last bound  together with  the Cauchy-Schwarz inequality with respect to $x$ and  Minkowski's integral inequality  leads to 
		\begin{equation*}\setlength{\jot}{11pt}
			\begin{split}
				\abs{S_3}&\leq c\,\norm{\partial_x^3g}_{L^2}\norm{\partial_x g}_{L^2}\bigg(\int_{\abs{\alpha}>1}\frac{1}							{\alpha^2}\int_{\mathbb R}K(x,\alpha)^2dxd\alpha\bigg)^{1/2}\\
&				+c\, \norm{\partial_x g}_{L^\infty}\norm{\partial_x^3g}_{L^2}\norm{\partial_x g}_{L^2}\int_{\abs{\alpha}>1}						\frac{1}{\abs{\alpha}^2}d\alpha+c\,\norm{\partial_xg}_{L^\infty}^2\norm{\partial_x^3g}_{L^2}\norm{\partial_x g}_{L^2}\int_{\abs{\alpha}>1}\frac{1}{\abs{\alpha}^3}d\alpha.
			\end{split}
		\end{equation*}
Then the estimate (\ref{bound_for_k_2r}) in \cref{main_prop_9} implies that 
\[\abs{S_3}\leq c\,(1+\norm{g}_{C^1})^2\norm{\partial_xg}_{L^2}\norm{\partial_x^3g}_{L^2}.\]
For $S_4,$ we use  the  bound (\ref{bound_b3}) to  obtain that
\[\abs{\partial_x\Delta_{\alpha}g B_3(x,\alpha)}\leq c\,(1+\norm{\partial_x^2g}_{L^\infty})^2\norm{\partial_xg}_{L^2}\abs{\alpha}^{-1}.\] 
Now, we apply  the Cauchy-Schwarz and  Minkoswki's integral inequalities. Then we conclude the following bound
	\[\abs{S_4}\leq c\,(1+\norm{g}_{C^2})^3\norm{\partial_xg}_{L^2}\norm{\partial_x^3g}_{L^2}. \]
The last inequality completes the estimate for the \emph{out} part $L_2^{out}.$  Hence estimate (\ref{bound_j41in}) and bounds for $L_1$ and $L_2$  implies that 
		\begin{equation}\label{bound_j41}
			\abs{J_{4,1}}\leq c\,(1+\norm{g}_{C^2})^3\norm{g}_{H^3}^2.
		\end{equation}
For $J_{4,2}^{in}$ using the formula (\ref{formula_taylor_third}) we find  that 
	\[J_{4,2}^{in}=\int_0^1\int_{\mathbb R}\partial_x^3g(x)\int_{\abs{\alpha}<1}\partial_x^3g(x+(s-1)\alpha)				\partial_x\Delta_{\alpha}g B_4(x,\alpha)d\alpha dxds.\]
From the identities  (\ref{terms_bs})  and  the estimate   (\ref{bound_for_partial_h}) we deduce that 
		\begin{equation}\label{bound_b4}
			\abs{B_4(x,\alpha)}\leq c\,(1+\norm{\partial_x^2g}_{L^\infty}).
		\end{equation}
The  bound (\ref{bound_b4}) together with  formulas (\ref{taylor_bound_order_1}), (\ref{formula_taylor_third}) and by applying the Cauchy-Schwarz inequality allow us conclude that 
	\begin{equation}\label{bound_j42in}
			\abs{J_{4,2}^{in}}\leq c\,(1+\norm{\partial_x^2g}_{L^\infty})									\norm{\partial_x^2g}_{L^\infty}\norm{\partial_x^3g}_{L^2}^2. 
	\end{equation}
For the \emph{out} part, expanding $\partial_x^2 \Delta_\alpha g$ we split $J_{4,2}^{out}:=L_3+L_4$ for 
	\begin{equation*}\setlength{\jot}{12pt}
		\begin{split}
			L_3&=\int_{\mathbb R}\partial_x^3g(x)\partial_x^2g(x)\int_{\abs{\alpha}>1}					\frac{1}{\alpha}\partial_x\Delta_{\alpha}g B_4(x,\alpha) d\alpha dx,\\
			L_4&=-\int_{\mathbb R}\partial_x^3g(x)\int_{\abs{\alpha}>1}										\frac{\partial_x^2g(x-\alpha)}{\alpha}\partial_x\Delta_{\alpha}g B_4(x,						\alpha)d\alpha dx.
		\end{split}
	\end{equation*}
Recall that $\abs{\partial_x\Delta_\alpha g}\leq 2\norm{\partial_xg}_{L^\infty}\abs{\alpha}^{-1}.$
We use  the estimate (\ref{bound_b4}) and  analogously to $S_2$ we obtain that 
		\[\abs{L_3}\leq c\,(1+\norm{\partial_x^2g}_{L^\infty})\norm{\partial_xg}_{L^\infty}\norm{\partial_x^2g}_{L^2}\norm{\partial_x^3g}_{L^2}.\]
The estimate for $L_4$ is easy, because is similar to $S_4,$ then  we have  the following bound
		\[\abs{L_4}\leq c\,(1+\norm{\partial_x^2g}_{L^\infty})\norm{\partial_xg}_{L^\infty}\norm{\partial_x^2g}_{L^2}\norm{\partial_x^3g}_{L^2}.\]
Using the estimates  for $L_3,L_4$ and the bound (\ref{bound_j42in}) we obtain that
	\begin{equation}\label{bound_j42}
		\abs{J_{4,2}}\leq c\,(1+\norm{g}_{C^2})^2\norm{g}_{H^3}^2.
	\end{equation}
Finally for $J_{4,3},$ expanding  $\partial_x^3\Delta_{\alpha}g,$ we split $J_{4,3}:=L_5+L_6$ for 
  \begin{equation*}\setlength{\jot}{12pt}
    \begin{split}
        L_5&=\int_{\mathbb R}[\partial_x^3g(x)]^2\int_{\mathbb R}\frac{1}{\alpha}[\partial_x\Delta_{\alpha}			g] B_5(x,\alpha)d\alpha dx,\\
        L_6&=\int_{\mathbb R}\partial_x^3g(x)\int_{\mathbb R}\frac{\partial_x^3g(x-\alpha)}{\alpha}\partial_x\Delta_{\alpha}g B_5(x,\alpha)d\alpha dx.\\
    \end{split}
    \end{equation*}
Using the indentities (\ref{terms_bs}) we have the next bound
	\begin{equation*}
		\begin{split}
			\abs{\partial_x \Delta_{\alpha}gB_5(x,\alpha)}\leq 2\norm{\partial_x^2g}_{L^\infty}.
		\end{split}
	\end{equation*}
The last bound, the  estimates (\ref{second_estimates_order_one}) from \cref{aux_main_prop4} together with the   Cauchy-Schwarz  and Minkowski's integral inequalities leads to 
   \begin{equation}\label{bound_L5}
		\abs{L_5}\leq c\, (1+\norm{g}_{C^{2,1/2}})^2\norm{\partial_x^3g}_{L^2}^2.				\end{equation}
For $L_6:=L_6^{in}+L_6^{out},$ the \emph{out} part is easy controlled by using Cauchy-Schwarz inequality and  Minkowski's integral inequality
	\begin{equation}\label{bound_L6_out}
		\begin{split}
			\abs{L_6^{out}} &\leq 2\norm{\partial_xg}_{L^\infty}\int_{\mathbb R}\abs{\partial_{x}^3g(x)}							\int_{\abs{\alpha}>1}\frac{\abs{\partial_x^3g(x-\alpha)}}{\abs{\alpha}^2}d\alpha dx\\
			&\leq c\,\norm{\partial_xg}_{L^\infty}\norm{\partial_x^3g}_{L^2}^2.
		\end{split}
	\end{equation}
For the \emph{in} part, we add and subtract $\partial_x^2g(x)$ and $B_5(x,0)$ in order to get  $L_6^{in}:=N_{1}+N_{2}+N_{3}$ for
	\begin{equation*}
		\begin{split}		
			N_{1}&=\int_{\mathbb R}\partial_x^3g(x)\int_{\abs{\alpha}<1}\frac{\partial_x^3g(x-\alpha)}{\alpha}				\big(\partial_x\Delta_{\alpha}g-\partial_x^2g(x)\big)B_5(x,\alpha)d\alpha dx,\\
			N_{2}&=\int_{\mathbb R}\partial_x^3g(x)\partial_x^2g(x)\int_{\abs{\alpha}<1}\frac{\partial_x^3g(x-\alpha)}{\alpha}\big(B_5(x,\alpha)-B_5(x,0)\big)d\alpha dx,\\
			N_{3}&=\int_{\mathbb R}\partial_x^3g(x)\partial_x^2g(x)B_5(x,0)H_{\abs{\alpha}<1}\partial_x^3g(x)dx.
		\end{split}
	\end{equation*}
For $N_1$  we use 
	\[\abs{\partial_x\Delta_\alpha g-\partial_x^2g(x)}\leq \abs{g}_{C^{2,\delta}}\abs{\alpha}^{\delta},\]
for $\delta\in (0,1).$  Due to $\abs{B_5(x,\alpha)}\leq c,$ and applying  Cauchy-Schwarz inequality with respect to $x$ followed by the Minkoswki integral inequality yields to
	\begin{equation*}	
		\begin{split}
			\abs{N_1}&\leq c\,\norm{g}_{C^{2,\delta}}\norm{\partial_x^3g}_{L^2}							\bigg(\int_{\mathbb R}\bigg|\int_{\abs{\alpha}<1}\abs{\partial_x^3g(x-\alpha)}\abs{\alpha}^{\delta-1}d\alpha\bigg|^2dx\bigg)^{1/2}\\
			&\leq c\,\norm{g}_{C^{2,\delta}}\norm{\partial_x^3g}_{L^2}\int_{\abs{\alpha}					<1}\abs{\alpha}^{\delta-1}\bigg(\int_{\mathbb R}\abs{\partial_x^3g(x-\alpha)}^2dx\bigg)^{1/2}d\alpha\leq  c\,\norm{g}_{C^{2,\delta}}\norm{\partial_x^3g}_{L^2}^2.
		\end{split}
	\end{equation*}
We estimate the second term $N_2$ using the next inequality
	\[\abs{B_5(x,\alpha)-B_5(x,0)}\leq c\,\abs{\Delta_{\alpha}h-\partial_xh(x)}\leq c\, 		(1+\norm{\partial_x^2g}_{L^\infty}).\]
Then, by applying the Cauchy-Schwarz and Minkoswki integral we obtain that
		\begin{equation*}
		\abs{N_2}\leq c\,(1+\norm{g}_{C^{2}})^2\norm{\partial_x^3g}_{L^2}^2.			\end{equation*}
Finally using   $\abs{B_5(x,0)}\leq c$ and the fact that the  truncated Hilbert transform is bounded operator in $L^2(\mathbb R),$ we obtain that 
		\[\abs{N_3}\leq c\,\norm{\partial_x^2g}_{L^\infty}\norm{\partial_x^3g}_{L^2}			^2.\] 
The estimates for $N_1,N_2,N_3,$ the bound (\ref{bound_L6_out}) and the estimate  (\ref{bound_L5})  allow us  conclude that 
	\[\abs{J_{4,3}}\leq c\,(1+\norm{g}_{C^{2,1/2}})^3\norm{g}_{H^3}^2.\]
By joining the estimates  (\ref{bound_j41}), (\ref{bound_j42}) and the last one, we complete the estimate for $J_4.$ We obtain that 
	\begin{equation}\label{bound_j4}\abs{J_4}\leq c\,(1+\norm{g}_{C^{2,1/2}})^3\norm{g}_{H^3}^2.\end{equation}
We conclude from inequalities (\ref{J1_bound}), (\ref{bound_j2}), (\ref{J3_bound}) and (\ref{bound_j4}) that 
	\begin{equation}\label{bound_iii}
		\abs{\mbox{ III }}\leq c\,(1+\norm{g}_{C^{2,1/2}})^{3}\norm{g}_{H^3}^2.
	\end{equation}

\noindent\emph{\color{blue}{Bound for  \emph{IV}}} : Notice
    \[\mbox{IV}=\int_{\mathbb R}\partial_x^3g(x)\partial_x^3\int_{\mathbb R}\Delta_\alpha g \,G(x,\alpha)d\alpha dx=2\int_{\mathbb R}\partial_x^3g(x)\int_{\mathbb R}\partial_x^3K(x,\alpha)d\alpha dx.\]
Using (\ref{third_derivative}) we decompose $\mbox{IV}:=J_5+J_6+J_7$  for 
	\begin{equation*}
		\begin{split}
			J_5&=2\int_{\mathbb R}\partial_x^3g(x)\int_{\mathbb R}\partial_x\Delta_{\alpha}h B_3(x,								\alpha)d\alpha dx,\\
			J_6&=2\int_{\mathbb R}\partial_x^3g(x)\int_{\mathbb R}\partial_x^2\Delta_{\alpha}g B_4(x,								\alpha)d\alpha dx,\\
			J_7&=2\int_{\mathbb R}\partial_x^3g(x)\int_{\mathbb R}\partial_x^3\Delta_{\alpha}g B_5(x,								\alpha)d\alpha dx.\\
		\end{split}
	\end{equation*}
From the identities (\ref{terms_bs}) we see that   $B_5(x,\alpha)=-2\Delta_{\alpha }h K(x,\alpha)^2.$ Then we estimate 
$J_7$ is the same way to $J_2.$ Thus
	\begin{equation}\label{bound_j7}
		\abs{J_7}\leq c\,(1+\norm{g}_{C^{2,1/2}})^2\norm{g}_{H^3}^2.
	\end{equation}
Using the formula (\ref{formula_taylor_third}) and the inequality (\ref{bound_b4}) together with the Cauchy-Schwarz inequality  we find that 
 \begin{equation*}\setlength{\jot}{12pt}
    \begin{split}
    \abs{J_6^{in}}&\leq \int_{0}^1\int_{\abs{\alpha}<1}\int_{\mathbb R}\abs{\partial_x^3g(x)}\abs{\partial_x^3g(x+(s-1)\alpha)}\abs{ B_4(x,\alpha)}d\alpha dxds\\
    &\leq c\,(1+\norm{\partial_x^2g}_{L^\infty})\norm{\partial_x^3g}_{L^2}^2.
    \end{split}
  \end{equation*}
For the \emph{out} part expanding $\partial_x^2\Delta_{\alpha }g$  we decompose $J_{6}^{out}:=L_5+L_6$ for 
    \begin{equation*}
        \begin{split}
 L_5&= \int_{\mathbb R}\partial_x^3g(x)\partial_x^2g(x)\,\bigg\{\,\int_{\abs{\alpha}>1}\frac{1}{\alpha} B_4(x,\alpha)d\alpha\bigg\}\, dx,\\
L_6&= - \int_{\mathbb R}\partial_x^3g(x)\int_{\abs{\alpha}>1}\frac{\partial_x^2g(x-\alpha)}{\alpha}B_4(x,\alpha)d\alpha dx.
        \end{split}
    \end{equation*}
We denote the inner integral by 
	\[\nu(x):=PV\int_{\abs{\alpha}>1}\frac{1}{\alpha}B_4(x,\alpha)d\alpha.\]
Now in order to get a bound for $\nu,$ we proceed in similar way to $\eta$ in (\ref{control_psi}). Using  estimates (\ref{bound_outside_b4}) in  \cref{bounds_outside_b4} we obtain that 
	\[\norm{\nu}_{L^\infty}\leq c\,(1+\norm{g}_{C^1})^2.\]
Then Cauchy-Schwarz inequality yields to 
	\[\abs{L_5}\leq c\,(1+\norm{g}_{C^1})^3\norm{\partial_x^2g}_{L^2}\norm{\partial_x^3g}_{L^2}.\]	
From identities (\ref{terms_bs}) we have the next bound
	\[\abs{B_4(x,\alpha)}\leq c\,(1+\norm{\partial_x^2g}_{L^\infty})K(x,\alpha).\,\]
For $L_6,$ we apply the Cauchy-Schwarz inequality, first with respect to $x$ and then respect to $\alpha,$ also we  use  \cref{main_prop_11}, then we deduce that 
	\begin{equation*}
		\begin{split}	
			\abs{L_6}&\leq c\,(1+\norm{\partial_xg}_{L^\infty})\norm{\partial_x^3 g}_{L^2}
			\norm{\partial_x^2g}_{L^2}\bigg(\int_{\abs{\alpha}>1}\frac{1}{\abs{\alpha}^2}\int_{\mathbb R}
			 K(x,\alpha)^2dx d\alpha\bigg)^{1/2}\\
			&\leq c\,(1+\norm{\partial_xg}_{L^\infty})^2\norm{\partial_x^3 g}_{L^2}\norm{\partial_x^2g}_{L^2}.
		\end{split}
	\end{equation*}
The bounds for $L_5$ and $L_6$ complete the estimate for the \emph{out} part. We conclude
	\begin{equation}\label{bound_j6}
		\abs{J_6}\leq c\,(1+\norm{g}_{C^2})^3\norm{g}_{L^2}\norm{\partial_x^3g}_{L^2}.
	\end{equation}
We now estimate $J_5,$ first we note 
	\[\partial_x\Delta_{\alpha}h\,B_3(x,\alpha)=\gamma(x,\alpha)\big[8+12\partial_x\Delta_\alpha g+6(\partial_x\Delta_\alpha g)^2+(\partial_x\Delta_\alpha g)^3\big],\]
where $\gamma(x,\alpha)$ is given by (\ref{def_gamma}). Then we decompose $J_5:=J_{5,1}+J_{5,2}+J_{5,3}+J_{5,4}$ for 
	\begin{equation*}
		\begin{split}
			J_{5,1}&=8\int_{\mathbb R}\partial_x^3g(x)\int_{\mathbb R}\gamma(x,\alpha)d\alpha 					dx,\\
			J_{5,2}&=12\int_{\mathbb R}\partial_x^3g(x)\int_{\mathbb R}\gamma(x,								\alpha)\partial_x\Delta_\alpha gd\alpha dx,\\
			J_{5,3}&=6\int_{\mathbb R}\partial_x^3g(x)\int_{\mathbb R}\gamma(x,									\alpha)(\partial_x\Delta_\alpha g)^2gd\alpha dx,\\
			J_{5,4}&=\int_{\mathbb R}\partial_x^3g(x)\int_{\mathbb R}\gamma(x,									\alpha)(\partial_x\Delta_\alpha g)^3d\alpha dx.\\
		\end{split}
	\end{equation*}
The bound $\abs{\gamma(x,\alpha)}<c,$ the formula (\ref{taylor_bound_order_1})  and the Cauchy-Schwarz inequality  imply that 
	\[\abs{J_{5,2}^{in}}+\abs{J_{5,3}^{in}}+\abs{J_{5,4}^{in}}\leq c\,(1+\norm{\partial_x^2g}							_{L^\infty})^2\norm{\partial_x^2g}_{L^2}\norm{\partial_x^3g}_{L^2}.\]
The \emph{out} part  $J_{5,3}^{out}$ is  easily bounded. By expanding $\partial_x\Delta_\alpha g$ we have $J_{5,3}^{out}:=S_5+S_6$ for
	\begin{equation*}
		\begin{split}
			S_5&=6\int_{\mathbb R}\partial_x^3g(x)\partial_xg(x)\int_{\abs{\alpha}>1}\frac{1}{\alpha}\gamma(x,						\alpha)(\partial_x\Delta_\alpha g)d\alpha dx,\\
			S_6&=-6\int_{\mathbb R}\partial_x^3g(x)\int_{\abs{\alpha}>1}\frac{\partial_xg(x-\alpha)}{\alpha}						\gamma(x,\alpha)(\partial_x\Delta_\alpha g)d\alpha dx.\\
		\end{split}
	\end{equation*}
The bound $\abs{\partial_x\Delta_{\alpha}g}\leq 2\norm{\partial_xg}_{L^\infty}\abs{\alpha}^{-1}$ and the Cauchy-Schwarz inequality yields 
\[\abs{S_5}\leq c\,\norm{\partial_xg}_{L^\infty}\norm{\partial_xg}_{L^2}\norm{\partial_x^3g}_{L^2}.\]
While for $S_6$ we use Minkowski's integral inequality and we obtain that  
\[\abs{S_6}\leq c\,\norm{\partial_xg}_{L^\infty}\norm{\partial_xg}_{L^2}\norm{\partial_x^3g}_{L^2},\]
and this completes the estimate for $J_{5,3}^{out}.$  For $J_{5,4}^{out}$ we proceed similarly to $J_{5,3}^{out}$, then we conclude that 
	\[\abs{J_{5,4}^{out}}\leq c\,\norm{\partial_xg}_{L^\infty}^2\norm{\partial_xg}_{L^2}\norm{\partial_x^3g}_{L^2}.\]
For $J_{5,2}^{out},$ expanding $\partial_x\Delta_{\alpha} g$ we get
	\begin{equation*}
		\begin{split}
			J_{5,2}^{out}&=12\int_{\mathbb R}\partial_x^3g(x)\partial_xg(x)\eta_1(x)dx-12\int_{\mathbb R}\partial_x^3g(x)\int_{\abs{\alpha}>1}\frac{\partial_xg(x-\alpha)}{\alpha}\gamma(x,\alpha)d\alpha dx,
		\end{split}
	\end{equation*}
where $\eta_1(x)$ is given as in (\ref{control_psi}). We use  $\abs{\gamma(x,\alpha)}\leq c\,K(x,\alpha)$ and  the estimate (\ref{bound_outside}) for $\norm{\eta_1}_{L^\infty}$ followed by applying the Cauchy-Schwarz inequality  we obtain that 
\[\abs{J_{5,2}^{out}}\leq c\,(1+\norm{g}_{L^\infty})^3\norm{\partial_xg}_{L^2}\norm{\partial_x^3g}_{L^2}.\]
Finally, from the definition (\ref{def_gamma}) we split  $\gamma(x,\alpha)$ and  decompose  $J_{5,1}:=S_7+S_8$ for
	\begin{equation*}
		\begin{split}
			S_7&=24\int_{\mathbb R}\partial_x^3g(x)\int_{\mathbb R}\Delta_{\alpha} hK(x,\alpha)^3d\alpha dx,\\
			S_8&=-48\int_{\mathbb R}\partial_x^3g(x)\int_{\mathbb R}(\Delta_{\alpha}h)^3K(x,\alpha)^4d\alpha dx.\\
		\end{split}
	\end{equation*}
We define
\[\gamma_f(x,\alpha):=\frac{\Delta_{\alpha}f}{(1+(\Delta_{\alpha}f)^2)^3}\]
and observe
\[\int_{\mathbb R}\gamma_f(x,\alpha)d\alpha=0.\]
Expanding $\Delta_{\alpha}h$ and adding $\gamma_f,$ we decompose
	\begin{equation*}
		\begin{split}		
			S_7&=24\int_{\mathbb R}\partial_x^3g(x)\big(\Delta_{\alpha} fK(x,\alpha)^3-\gamma_f(x,\alpha)\big)d\alpha 							dx+24\int_{\mathbb R}\partial_x^3g(x)\int_{\mathbb R}\Delta_{\alpha}g K(x,\alpha)^3d\alpha dx\\
			&:=S_{7,1}+S_{7,2}.
		\end{split}
	\end{equation*}
Using the formula (\ref{formula_slope_g}) and the  Cauchy-Schwarz inequality we obtain that 
	\begin{equation*}
		\begin{split}		
			\abs{S_{7,2}^{in}}&\leq c\, \int_0^1\int_{\mathbb R}\int_{\abs{\alpha}<1}\abs{\partial_x^3g(x)}\abs{\partial_xg(x+(s-1)				\alpha)} K(x,\alpha)^3d\alpha dxds\\
			&\leq c\,\norm{\partial_xg}_{L^2}\norm{\partial_x^3g}_{L^2}.
		\end{split}
	\end{equation*}
Anagolously to $J_6^{out},$ we expand $\Delta_{\alpha}g$ and decompose
\[S_{7,2}^{out}=\int_{\mathbb R}\partial_x^3g(x)g(x)\int_{\abs{\alpha}>1}\frac{1}{\alpha}K(x,\alpha)^3d\alpha dx-\int_{\mathbb R}\partial_x^3g(x)\int_{\abs{\alpha}>1}\frac{g(x-\alpha)}{\alpha}K(x,\alpha)^3d\alpha dx.\]
Applying the Cauchy-Schwarz inequality and using the estimates (\ref{bound_for_k_2r}) and  (\ref{bound_k3s}) from  \cref{main_prop_9} and \cref{bound_k3}  we deduce that
	\begin{equation*}
		\abs{S_{7,2}^{out}}\leq c\,(1+\norm{g}_{L^\infty})\norm{ g}_{L^2}\norm{\partial_x^3g}_{L^2}.
	\end{equation*}
For the term $S_{7,1},$ we observe
	\begin{equation*}
		\begin{split}
			\Delta_{\alpha}f K(x,\alpha)^3-\gamma_f(x,\alpha)&=\Delta_{\alpha }g\,\Gamma(x,\alpha),
		\end{split}
	\end{equation*}
where 
	\begin{equation}\label{Gamma_dfn}
		\Gamma(x,\alpha):=-\Delta_{\alpha }f(\Delta_\alpha f+\Delta_\alpha h)\bigg[\frac{K(x,\alpha)^3}{1+(\Delta_\alpha f)^2}+\frac{K(x,			\alpha)^2}{(1+(\Delta_\alpha f)^2)^2}+\frac{K(x,\alpha)}{(1+(\Delta_\alpha f)^2)^3}\bigg].
	\end{equation}
Notice $\abs{\Gamma(x,\alpha)}\leq c,$ we obtain a bound for  the \emph{in} part 
	\[\abs{S_{7,1}^{in}}\leq c\,\norm{\partial_xg}_{L^2}\norm{\partial_x^3g}_{L^2}.\]
Now expanding $\Delta_\alpha g$ we have 
	\begin{equation}\label{term_s7_out}
		\begin{split}
			S_{7,1}^{out}&=\int_{\mathbb R}\partial_x^3g(x)g(x)\int_{\abs{\alpha}>1}\frac{1}{\alpha}\Gamma(x,\alpha)d\alpha dx
				-\int_{\mathbb R}\partial_x^3g(x)\int_{\abs{\alpha}>1}\frac{g(x-\alpha)}{\alpha}\Gamma(x,\alpha)dxd\alpha.
		\end{split}
	\end{equation}
Using the estimate (\ref{bound_outside_gamma}) from  \cref{bounds_outside_gamma_n}  and the Cauchy-Schwarz inequality we find that
	\begin{equation*}
		\begin{split}
			\bigg|\int_{\mathbb R}\partial_x^3g(x)g(x)\int_{\abs{\alpha}>1}\frac{1}{\alpha}\Gamma(x,\alpha)d\alpha dx\bigg|       &\leq c\,(1+\norm{g}_{L^\infty})\norm{ g}_{L^2}\norm{\partial_x^3g}_{L^2}.
		\end{split}
	\end{equation*}
For the second term  in (\ref{term_s7_out}) we use the next bound
\[\abs{\Gamma(x,\alpha)}\leq c\,K(x,\alpha)+2\norm{g}_{L^\infty}\abs{\alpha}^{-1}.\]
Then we apply the  Cauchy-Schwarz and Minkowski's integral inequalities to obtain that
\[\bigg|\int_{\mathbb R}\partial_x^3g(x)\int_{\abs{\alpha}>1}\frac{g(x-\alpha)}{\alpha}\Gamma(x,\alpha)dxd\alpha\bigg|\leq c\,(1+\norm{g}_{L^\infty})\norm{g}_{L^2}\norm{\partial_x^3g}_{L^2},\]
and this completes the estimate for $S_{7,1}^{out}.$ Therefore
	\begin{equation}\label{bound_S7}\abs{S_7}\leq c\,(1+\norm{g}_{L^\infty})\norm{g}_{L^2}\norm{\partial_x^3g}_{L^2}.\end{equation}
Finally,  for $S_8$ we expand $(\Delta_{\alpha}h)^3$ 
and decompose $S_8:= S_{8,1}+S_{8,2}+S_{8,3}+S_{8,4}$ for 
		\begin{equation*}
			\begin{split}
				S_{8,1}&=\int_{\mathbb R}\partial_x^3g(x)\int_{\mathbb R}(\Delta_{\alpha}f)^3K(x,\alpha)^4d\alpha dx,\\
		S_{8,2}&=3\int_{\mathbb R}\partial_x^3g(x)\int_{\mathbb R}(\Delta_{\alpha}f)^2\Delta_{\alpha}gK(x,\alpha)^4d\alpha dx,\\
				S_{8,3}&=3\int_{\mathbb R}\partial_x^3g(x)\int_{\mathbb R}\Delta_{\alpha}f(\Delta_{\alpha}g)^2K(x,\alpha)^4d\alpha dx,\\
				S_{8,4}&=\int_{\mathbb R}\partial_x^3g(x)\int_{\mathbb R}(\Delta_{\alpha}g)^3K(x,\alpha)^4d\alpha dx.
			\end{split}
		\end{equation*}
Repeating the same argument as in $S_7,$  we find that 
	\[\abs{S_{8,2}+S_{8,3}+S_{8,4}}\leq c\,(1+\norm{g}_{C^1})^2\norm{g}_{H^3}^2.\]
For $S_{8,1}$ we consider the function
	\[\theta_f(x,\alpha):=\frac{(\Delta_\alpha f)^3}{(1+(\Delta_{\alpha}f)^2)^4},\]
we observe $\int_{\mathbb R}\theta_fd\alpha=0.$ Using the formula (\ref{formula_slope_g}) and adding $\theta_f$ we decompose $S_{8,1}$ in the next way 
	\begin{equation*}
		\begin{split}
			S_{8,1}&=\int_{\mathbb R}\partial_x^3g(x)\int_{\mathbb R}\Big[(\Delta_{\alpha}f)^3K(x,\alpha)^4-\theta_f(x,					\alpha)\Big]d\alpha dx\\
				&=\int_{\mathbb R}\partial_x^3g(x)\int_{\mathbb R}\Delta_{\alpha}g\,\Theta(x,\alpha)d\alpha dx\\
				&=\int_0^1\int_{\abs{\alpha}<1}\int_{\mathbb R}\partial_x^3g(x)\partial_xg(x+(s-1)									\alpha)\Theta(x,\alpha)dxd\alpha ds\\
				&+\int_{\mathbb R}\partial_x^3g(x)g(x)\int_{\abs{\alpha}>1}\frac{1}{\alpha}\Theta(x,								\alpha)d\alpha dx-\int_{\mathbb R}\partial_x^3g(x)\int_{\abs{\alpha}>1}\frac{g(x-\alpha)}{\alpha}						\Theta(x,\alpha)d\alpha dx,\\
		\end{split}
	\end{equation*}
where
		\begin{equation}\label{Teta_dfn}\setlength{\jot}{12pt}			
			\begin{split}
				\Theta(x,\alpha)&:=-(\Delta_{\alpha}f)^3(\Delta_{\alpha}f+\Delta_\alpha h)\bigg[\frac{K(x,\alpha)^4}{1+							(\Delta_\alpha f)^2}+\frac{K(x,\alpha)^3}{(1+(\Delta_\alpha f)^2)^2}\\
					&\hspace{6cm}+\frac{K(x,\alpha)^2}{(1+(\Delta_\alpha f)^2)^3}+\frac{K(x,\alpha)}{(1+(\Delta_\alpha f)^2)^4}\bigg].
			\end{split}
		\end{equation}
We use a similar bound as in (\ref{bounds_in_the_numerator}) to obtain that  $\abs{\Theta(x,\alpha)}<c\,(1+\norm{\partial_xg}_{L^\infty})^2.$	 Then  using Cauchy-Schwarz inequality  we find a bound for the \emph{in} part
\begin{equation*}
\abs{S_{8,1}^{in}}\leq c\,(1+\norm{\partial_xg}_{L^\infty})^2\norm{\partial_xg}_{L^2}\norm{\partial_x^3g}_{L^2}.\end{equation*}
For the last two terms we use the estimate (\ref{bound_outside_teta}) in \cref{bounds_outside_teta_n}, to control  the inside integral, and the estimate
	\[\abs{\Theta(x,\alpha)}\leq c\,(1+\norm{\partial_xg}_{L^\infty})^2 K(x,\alpha).\]
Then Cauchy-Schwarz inequality implies 
	\begin{equation*}
		\begin{split}
				\bigg|\int_{\mathbb R}\partial_x^2g(x)g(x)\int_{\abs{\alpha}>1}\frac{1}{\alpha}\Theta(x,\alpha)d\alpha dx\bigg|&\leq c\,(1+\norm{g}_{C^1})^2\norm{g}_{L^2}\norm{\partial_x^3g}_{L^2}.
		\end{split}
	\end{equation*}
Now we apply the Cauchy-Schwarz inequality first with respect to $x$ and then respect to $\alpha$ 
		\begin{equation*}
		\begin{split}
				\bigg|\int_{\mathbb R}\partial_x^2g(x)\int_{\abs{\alpha}>1}\frac{g(x-\alpha)}{\alpha}\Theta(x,\alpha)d\alpha dx\bigg|&\\&\hspace{-4cm}\leq c\,(1+\norm{g}_{C^1})^2\norm{g}_{L^2}\norm{\partial_x^3g}_{L^2}\bigg(\int_{\abs{\alpha}>1}\frac{1}{\abs{\alpha}^2}\int_{\mathbb R}K(x,\alpha)^2dxd\alpha\bigg)^{1/2}.
		\end{split}
	\end{equation*}
The estimate (\ref{bound_for_k_2r}) in \cref{main_prop_9} leads to 
	\[\abs{S_{8,1}^{out}}\leq c\,(1+\norm{g}_{C^1})^2\norm{g}_{L^2}\norm{\partial_x^3g}_{L^2}.\]
By bringing together  the inequalities for $S_{8,1},S_{8,2},S_{8,3}, S_{8,4} $ and the bound (\ref{bound_S7}) we complete the estimate for  $J_{5,1},$ and  we obtain that 
\begin{equation*}
		\abs{J_{5,1}}\leq c\,(1+\norm{g}_{C^1})^2\norm{g}_{H^3}^2.
\end{equation*} 
The previous estimates for $J_{5,2},J_{5,3},J_{5,4},$ and the last one, lead us  to conclude
that
\begin{equation}\label{bound_j5} \abs{J_5}\leq c\,(1+\norm{g}_{C^2})^3\norm{g}_{H^3}^2.\end{equation}
Using the estimates  (\ref{bound_j7}), (\ref{bound_j6}) and (\ref{bound_j5}) we deduce 
\[\abs{\,\mbox{IV}\,}\leq c\,(1+\norm{g}_{C^{2,1/2}})^3\norm{g}_{H^3}^2.\]
Finally, using the estimate (\ref{bound_iii})  we obtain 
	\[\abs{\,\mbox{III}\,}+ \abs{\,\mbox{IV}\,}\leq  c\,(1+\norm{g}_{C^{2,1/2}})^3\norm{g}_{H^3}^2. \]
The Sobolev embedding $H^3(\mathbb R)\hookrightarrow C^{2,1/2}(\mathbb R)$   completes the proof of the lemma.
\end{proof}

\noindent From the inequalities (\ref{one_eq}) and (\ref{main_ineq_2})  in \cref{mainlemma1}  and \cref{secondmain_lemma} we get 
\[\frac{d}{dt}(1+\norm{g(t)}_{H^3})\leq c\,\big(1+\norm{g(t)}_{H^3}\big)^4.\]
We integrate in time to obtain that 
\[\norm{g(t)}_{H^3}\leq \frac{\norm{g_0}_{H^3}}{\Big(1-c[\phi(0)]^3t\Big)^{1/3}},\]
where $\phi(0)=1+\norm{g_0}_{H^3}.$ Then the solution belongs to $H^3(\mathbb R)$ up to a time 
\[t<\frac{\phi(0)^{-3}}{c}=T^\star.\]

 \section{Bounds on the Kernels}\label{KernelBounds}
This section is devoted to the necessary lemmas used in the energy estimates.
 More precisely, we study   the  integrability and decay properties of  the kernels $K$ and $G$ defined in (\ref{kernelskg}). Throughout the section, we will  use often  the auxiliary globally Lipschitz  function  $F\colon \mathbb R\to\mathbb R$ defined by 
\[F(x):=\frac{1}{1+x^2}.\]
We start with the following lemma.
\begin{lemma}\label{hilbert_truncated}The truncated Hilbert transform of the  rational function
			\[r(x)=\frac{x^{m}}{(1+x^2)^{n}},\] 
for $m,n\in\mathbb N_+$  and  $m< 2n$ is  bounded. That is 
\[\abs{H_{\abs{y}<1}r(x)}<c\quad\mbox{and}\quad \abs{H_{\abs{y}>1}r(x)}<c.\]
\end{lemma}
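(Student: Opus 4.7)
My plan is to reduce the lemma to two properties of $r$: that $r\in L^2(\mathbb{R})$, and that $r$ is Lipschitz (equivalently $r'\in L^\infty(\mathbb{R})$). Both follow directly from the hypothesis $m<2n$. Near the origin $r(x)\sim x^m$ with $m\geq 1$, and at infinity $r(x)\sim x^{m-2n}$ with $m-2n\leq -1$, so $|r|^2$ is integrable on $\mathbb{R}$ (the tail is integrated against $x^{2m-4n}$ with exponent $\leq -2$). Differentiating,
\[
r'(x)=\frac{mx^{m-1}}{(1+x^2)^n}-\frac{2nx^{m+1}}{(1+x^2)^{n+1}},
\]
and both rational functions are bounded on $\mathbb{R}$ since their degree at infinity is $m-1-2n\leq 0$. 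Thus $\norm{r}_{L^2}$ and $\norm{r'}_{L^\infty}$ are finite.

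For the outer truncated transform I would apply Cauchy-Schwarz directly:
\[
\abs{H_{\abs{y}>1}r(x)}\leq \frac{1}{\pi}\int_{\abs{y}>1}\frac{\abs{r(x-y)}}{\abs{y}}\,dy\leq \frac{1}{\pi}\bigg(\int_{\abs{y}>1}\frac{1}{y^2}\,dy\bigg)^{1/2}\norm{r}_{L^2}=\frac{\sqrt{2}}{\pi}\norm{r}_{L^2},
\]
which is independent of $x$.

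For the inner truncated transform I would use the standard PV trick: since $1/y$ is odd on $\abs{y}<1$, $PV\int_{\abs{y}<1}y^{-1}dy=0$, hence
\[
H_{\abs{y}<1}r(x)=\frac{1}{\pi}\int_{\abs{y}<1}\frac{r(x-y)-r(x)}{y}\,dy.
\]
The mean value theorem gives $\abs{r(x-y)-r(x)}\leq \norm{r'}_{L^\infty}\abs{y}$, so the integrand is bounded by $\norm{r'}_{L^\infty}$ and we obtain $\abs{H_{\abs{y}<1}r(x)}\leq (2/\pi)\norm{r'}_{L^\infty}$ uniformly in $x$.

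There is no real obstacle here; the content is simply checking that the two constants $\norm{r}_{L^2}$ and $\norm{r'}_{L^\infty}$ are finite, which is where the hypothesis $m<2n$ is used. The only thing to be careful about is the (harmless) principal value cancellation in the inner integral.
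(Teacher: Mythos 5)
Your proof is correct, and it takes a genuinely different route from the paper. The paper works with the rational structure of $r$ directly: after the change of variables it decomposes $\frac{1}{x-y}\frac{y^m}{(1+y^2)^n}$ by partial fractions into a singular piece $\frac{b(x)}{x-y}$ (killed over the symmetric region by the principal value) plus terms $\frac{a_k(x)y+c_k(x)}{(1+y^2)^k}$ with $x$-dependent but bounded coefficients, and it handles the outer part by writing $H_{\abs{y}>1}r=Hr-H_{\abs{y}<1}r$ and invoking the classical fact that the full Hilbert transform of such a rational function is again a bounded rational function. You instead isolate the two quantitative ingredients $\norm{r}_{L^2}<\infty$ and $\norm{r'}_{L^\infty}<\infty$ (both consequences of $m<2n$), bound the outer part by Cauchy--Schwarz against $\int_{\abs{y}>1}y^{-2}\,dy$, and bound the inner part by the PV cancellation plus the mean value theorem. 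Your argument is more elementary and more general --- it applies verbatim to any Lipschitz $L^2$ function, not just rational ones, and it gives explicit constants $\frac{\sqrt{2}}{\pi}\norm{r}_{L^2}$ and $\frac{2}{\pi}\norm{r'}_{L^\infty}$ without needing to verify boundedness of the partial-fraction coefficients or to quote the mapping property of $H$ on rational functions; what the paper's approach buys is consistency with the explicit rational-function computations (such as $HF(2x)$) used elsewhere in the article. The only point worth stating explicitly in your write-up is the uniformity in $x$, which your bounds do provide since neither constant depends on $x$.
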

\begin{proof} Using the definition of the Hilbert transform we have 
	\[Hr(x)=\frac{1}{\pi}PV\int_{\mathbb R}\frac{1}{y}\frac{(x-y)^m}{(1+(x-y)^2)^n}dy.\]
We know that the  Hilbert transform of rational function  is again a rational function and ${\abs{Hr(x)}\leq c.}$ Firstly, we estimate the \emph{in} part. We decompose the integrand using partial fractions as follows
\[\frac{1}{x-y}\frac{y^m}{(1+y^2)^n}=\frac{b(x)}{x-y}+\sum_{k=1}^n\frac{a_k(x)y+c_k(x)}{(1+y^2)^k},\]
where $b(x), a_k(x)$  and $c_k(x)$ are bounded terms. We obtain that 
\begin{equation*}
	\begin{split}
		H_{\abs{y}<1}r(x)&=\frac{1}{\pi}b(x)\int_{\abs{x-y}<1}\frac{1}{x-y}dy+\frac{1}{\pi}\sum_{k=1}^na_k(x)					\int_{\abs{x-y}<1}\frac{y}{(1+y^2)^k}dy\\
		&+\frac{1}{\pi}\sum_{k=1}^nc_k(x)\int_{\abs{x-y}<1}\frac{1}{(1+y^2)^k}dy.\\
	\end{split}
\end{equation*}
We deduce that $\abs{H_{\abs{y}<1}r(x)}<c.$ The bound  for  the \emph{out} part is easy because 
\[H_{\abs{y}>1}r(x)=Hr(x)-H_{\abs{y}<1}r(x),\]
thus $\abs{H_{\abs{y}>1}r(x)}<c$ which completes the proof.
\end{proof}

\begin{lemma} \label{aux_main_lemma1}Let $g\in H^s(\mathbb R)$  with $s\geq 3,$ then
    \begin{equation}\label{first_estimates_order_one}
        \begin{split}
            \bigg\lVert PV\int_{\mathbb R}\frac{1}{\alpha}K(\,\cdot\,,\alpha)d\alpha\bigg\rVert_{L^\infty}&\leq c\,(1+\norm{g}_{C^2})^3.
        \end{split}
    \end{equation}
\end{lemma}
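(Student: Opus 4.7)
My plan is to split the principal value integral into a near-origin piece ($|\alpha|<1$), where the $1/\alpha$ singularity has to be tamed by oddness, and a far piece ($|\alpha|>1$), where the integral is absolutely convergent thanks to the quadratic decay of $K$ in $|\alpha|$. Each piece will be bounded uniformly in $x\in\mathbb R$ by a low power of $(1+\norm{g}_{C^2})$, which sits comfortably inside the stated cubic bound.

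For the \emph{in} part, I use oddness of $1/\alpha$ to rewrite
\[
PV\!\int_{|\alpha|<1}\frac{K(x,\alpha)}{\alpha}\,d\alpha=\int_{0}^{1}\frac{K(x,\alpha)-K(x,-\alpha)}{\alpha}\,d\alpha.
\]
Since $K(x,\alpha)=F(\Delta_\alpha h(x))$ with $F$ globally Lipschitz, the key input is control of $|\Delta_\alpha h-\Delta_{-\alpha}h|$. Writing both slopes via the Fundamental Theorem of Calculus (as in formula (\ref{formula_slope_g}) applied to $h$) yields
\[
\Delta_\alpha h(x)-\Delta_{-\alpha}h(x)=\int_{0}^{1}\bigl[\partial_x h(x+(s-1)\alpha)-\partial_x h(x+(1-s)\alpha)\bigr]\,ds,
\]
which is bounded by $|\alpha|\norm{\partial_x^2 h}_{L^\infty}$. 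Since $h''=2+g''$, I get $|K(x,\alpha)-K(x,-\alpha)|\le c|\alpha|(1+\norm{g}_{C^2})$, so the integrand is uniformly bounded and the in part is at most $c(1+\norm{g}_{C^2})$.

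For the \emph{out} part, no cancellation is required and I estimate $\int_{|\alpha|>1}K(x,\alpha)/|\alpha|\,d\alpha$ by a case split on $|2x-\alpha|$. In the ``non-resonant'' region $|2x-\alpha|\geq 2\norm{g'}_{L^\infty}+1$ the triangle inequality gives $|\Delta_\alpha h|\geq |2x-\alpha|/2$, hence $K(x,\alpha)\leq 4/(1+(2x-\alpha)^2)$; combined with $1/|\alpha|\leq 1$ on $|\alpha|>1$, the integral against the shifted Cauchy density is bounded uniformly by $4\pi$. In the ``resonant'' region $|2x-\alpha|<2\norm{g'}_{L^\infty}+1$ I just use $K\leq 1$; the region has length $O(1+\norm{g}_{C^1})$ and $1/|\alpha|\leq 1$, so the contribution is $\leq c(1+\norm{g}_{C^1})$.

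The main obstacle is precisely this uniformity in $x$: $K(x,\alpha)$ depends on $x$ through the linear drift $2x-\alpha$, so one cannot treat the integrand as a fixed function of $\alpha$ alone, and the resonance location moves with $x$. The case split above isolates the moving resonance (where $K$ is large but only on a narrow set) from the off-resonant tail (where $K$ has genuine quadratic decay in $|2x-\alpha|$), and both contributions are bounded independently of $x$. Putting the two parts together yields $\bigl\lVert PV\!\int_\mathbb R\frac{1}{\alpha}K(\,\cdot\,,\alpha)d\alpha\bigr\rVert_{L^\infty}\leq c(1+\norm{g}_{C^2})$, which trivially implies the stated bound $c(1+\norm{g}_{C^2})^3$; the looser cubic form is presumably chosen to match the uniform polynomial structure used throughout the energy estimates.
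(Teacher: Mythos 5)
Your proof is correct, but it follows a genuinely different route from the paper. For the \emph{in} part the paper does not symmetrize: it subtracts the parabola contribution $F(\Delta_\alpha f)$, whose $1/\alpha$-integral is exactly $\pi HF(2x)$, writes the remainder as $\Delta_\alpha g\, B(x,\alpha)$, and then adds and subtracts $\partial_x g(x)$ and $F(\partial_x h(x))$ so that every surviving term is either $O(\abs{\alpha}^0)$ by the bounds $\abs{\Delta_\alpha g-\partial_xg(x)}\leq c\norm{\partial_x^2g}_{L^\infty}\abs{\alpha}$, $\abs{F(\Delta_\alpha h)-F(\partial_xh(x))}\leq c(1+\norm{\partial_x^2g}_{L^\infty})\abs{\alpha}$, or a truncated Hilbert transform of a rational function controlled by \cref{hilbert_truncated}; for the \emph{out} part it uses $\abs{F(\Delta_\alpha h)-F(\Delta_\alpha f)}\leq 2\norm{g}_{L^\infty}\abs{\alpha}^{-1}$. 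Your argument replaces all of this by the PV symmetrization $\int_0^1[K(x,\alpha)-K(x,-\alpha)]\alpha^{-1}d\alpha$ together with the second-difference bound $\abs{\Delta_\alpha h-\Delta_{-\alpha}h}\leq \norm{\partial_x^2h}_{L^\infty}\abs{\alpha}$ and the global Lipschitz property of $F$, and, far from the origin, by a resonant/non-resonant split in $\abs{2x-\alpha}$ that exploits the decay $K(x,\alpha)\leq 4/(1+(2x-\alpha)^2)$ off a moving interval of length $O(1+\norm{g}_{C^1})$ where one simply uses $K\leq 1$; all steps check out (in particular $\abs{2x-\alpha}\geq 2\norm{\partial_xg}_{L^\infty}+1$ does give $\abs{\Delta_\alpha h}\geq\abs{2x-\alpha}/2$, and the out integral is absolutely convergent so discarding the PV there is harmless). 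Your version is more elementary and self-contained, avoids \cref{hilbert_truncated} entirely, and even yields the sharper bound $c(1+\norm{g}_{C^2})$, which of course implies the stated cubic bound; what the paper's decomposition buys is reusable machinery, since the same ``parabola plus correction'' splitting and the rational-function Hilbert transform lemma are invoked again in \cref{aux_main_prop3}, \cref{aux_main_prop4}, \cref{bound_k3} and the subsequent kernel estimates, whereas your resonance argument would have to be redone term by term there.
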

\begin{proof} Notice  that  by definition  
      \[K(x,\alpha)=F(\Delta_\alpha h).\] 
We  decompose the integral in the next way 
   \begin{equation}\label{bound_k1_L_infinity}
    \begin{split}
       PV \int_{\mathbb R}\frac{1}{\alpha}K(x,\alpha) d\alpha&= \int_{\mathbb R}\frac{1}{\alpha} F(\Delta_\alpha f)d\alpha+\int_{\mathbb R}\frac{1}{\alpha}\big[F(\Delta_\alpha h)-F(\Delta_\alpha f)\big] d\alpha\\
            \end{split}
    \end{equation}
      where the first term is the  Hilbert transform of  $F,$ that is   
    \[PV\int_{\mathbb R}\frac{1}{\alpha }F(\Delta_{\alpha}f)d\alpha =PV\int_{\mathbb R}\frac{1}{\alpha}\frac{1}{1+(2x-\alpha)^2}d\alpha =\pi HF(2x),\]
   this  Hilbert transform is a rational function and is bounded.  To deal with the second term in (\ref{bound_k1_L_infinity}) we split it in the \emph{in} and \emph{out} parts. We compute  the difference  and observe 
        \begin{equation*}
            F(\Delta_\alpha h)-F(\Delta_{\alpha }f)=\Delta_{\alpha }g\,B(x,\alpha)
          \end{equation*}
        where 
        \begin{equation*}
          \begin{split}B(x,\alpha)
          &=-2\Delta_{\alpha }f F(\Delta_\alpha f)F(\Delta_\alpha h)-\Delta_{\alpha }g F(\Delta_\alpha f)F(\Delta_\alpha h)
          \end{split}
          \end{equation*}
        is a bounded term $\abs{B(x,\alpha)}\leq 2$.  Adding and subtracting  $\partial_xg(x)$   we have the next decomposition
          \begin{equation}\label{bounds_for_b1}
                \begin{split}
                  \int_{\abs{\alpha}<1}\frac{1}{\alpha}\big[F(\Delta_\alpha h)-F(\Delta_\alpha f)\big] d\alpha&=\int_{\abs{\alpha}<1}\frac{1}{\alpha}(\Delta_\alpha g-\partial_xg(x))B(x,\alpha)d\alpha \\
&+ \partial_xg(x)\int_{\abs{\alpha}<1}\frac{1}{\alpha}B(x,\alpha )d\alpha.
                \end{split}
            \end{equation}
  Now, from the Fundamental Theorem of Calculus we have the next bound
    \begin{equation}\label{second_order_g}\abs{\Delta_\alpha g- \partial_xg(x)}\leq c\, \norm{\partial_x^2g}_{L^\infty}\abs{\alpha}.\end{equation}
    Using the bound for $B(x,\alpha)$ and the last inequality we obtain that 
    \[\bigg|\int_{\abs{\alpha}<1}\frac{1}{\alpha}(\Delta_\alpha g-\partial_xg(x)) B(x,\alpha)d\alpha \bigg|\leq c\,\norm{\partial_x^2g}_{L^\infty}.\]
    For the second integral in (\ref{bounds_for_b1}), adding and subtracting  the terms $\partial_xg(x)$ and  $F(\partial_x h(x))$ we obtain the next decomposition
    \begin{equation*}
      \begin{split}
        \int_{\abs{\alpha}<1}\frac{1}{\alpha} B(x,\alpha)d\alpha
&=-2\int_{\abs{\alpha}<1}\frac{1}{\alpha}\Delta_{\alpha}f F(\Delta_\alpha f)\big[F(\Delta_\alpha h)- F(\partial_xh(x))\big]d\alpha\\
& -F(\partial_xh(x))\int_{\abs{\alpha}<1}\frac{\Delta_{\alpha}f}{\alpha} F(\Delta_\alpha f)d\alpha\\
        &- \int_{\abs{\alpha}<1 }\frac{1}{\alpha}\big(\Delta_\alpha g-\partial_xg(x)\big) F(\Delta_\alpha f) F(\Delta_{\alpha }h)d\alpha\\
&-\partial_x g(x)\int_{\abs{\alpha}<1}\frac{1}{\alpha}F(\Delta_{\alpha}f)\big[ F(\Delta_{\alpha}h)-F(\partial_x h(x))\big]d\alpha\\
        &-\partial_x g(x)F(\partial_x h(x))\int_{\abs{\alpha}<1}\frac{1}{\alpha}F(\Delta_{\alpha}f)d\alpha. 
      \end{split}
    \end{equation*}
Using the Lipschitz condition of $F$ and the Fundamental Theorem of Calculus we deduce that 
	\begin{equation}\label{second_order_h}\abs{F(\Delta_\alpha h)-F(\partial_xh(x))}\leq c\,\abs{\Delta_{\alpha}h-\partial_xh(x)}\leq c\, \big(1+\norm{\partial_x^2g}_{L^{\infty}}\big)\,\abs{\alpha}\end{equation} and from   \cref{hilbert_truncated}  we find that 
\[\bigg|\int_{\abs{\alpha}<1}\frac{1}{\alpha}F(\Delta_{\alpha}f)d\alpha\bigg|,\,\bigg|\int_{\abs{\alpha}<1}\frac{1}{\alpha}\Delta_\alpha fF(\Delta_\alpha f)d\alpha \bigg|<c,\]
in the last integral  we recall the  definition of $f$. Therefore we conclude from (\ref{second_order_g}) and (\ref{second_order_h}) that
 \[\bigg|\partial_xg(x)\int_{\abs{\alpha}<1}\frac{1}{\alpha} B(x,\alpha)d\alpha\bigg|\leq c\,(1+\norm{g}_{C^2})^3.\]
 The bound for the \emph{out} part in the second term of (\ref{bounds_for_b1}) can be deduced from the Lipschitz condition of $F$
 \begin{equation}\label{lip_f}\abs{F(\Delta_\alpha h)-F(\Delta_\alpha f)}\leq 2\norm{g}_{L^\infty}\abs{\alpha}^{-1}.\end{equation}
Then using the fact that $B(x,\alpha)$ is bounded, we conclude that 
 \[\bigg|\int_{\abs{\alpha}>1}\frac{1}{\alpha}\big[ F(\Delta_\alpha h)-F(\Delta_{\alpha }f)\big]d\alpha\bigg|\leq c\,\norm{g}_{L^\infty}.\]
 and this completes the proof.
\end{proof}
The following  result  presents  a similar estimate to the previous lemma, but now for the kernel $G$. 
\begin{lemma}\label{aux_main_prop3}Let $g\in H^s(\mathbb R)$  with $s\geq 3,$ then
        \begin{equation}\label{first_estimates_order_3}
          \bigg\lVert PV\int_{\mathbb R}\frac{1}{\alpha } G(\,\cdot\,,\alpha)d \alpha\bigg\rVert_{L^\infty}\leq c\,(1+\norm{g}_{C^2})^2.
        \end{equation}
\end{lemma}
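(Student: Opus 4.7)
The plan is to split the integral at $\abs{\alpha}=1$,
\[PV\int_{\mathbb R}\frac{G(x,\alpha)}{\alpha}\,d\alpha \;=\; PV\int_{\abs{\alpha}<1}\frac{G(x,\alpha)}{\alpha}\,d\alpha \;+\; \int_{\abs{\alpha}>1}\frac{G(x,\alpha)}{\alpha}\,d\alpha,\]
and to treat each piece by exploiting the structural inequality $\abs{yF(y)}\le 1/2$. This ensures that every appearance of the ``large'' quantities $\Delta_\alpha h+\Delta_\alpha f$ or $\Delta_\alpha f$ (which grow like $\abs{x}$ for fixed $\alpha$) is paired with a decaying factor of $F(\Delta_\alpha h)$ or $F(\Delta_\alpha f)$, yielding a bounded combination. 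In parallel, $F(\Delta_\alpha f)=1/(1+(2x-\alpha)^2)$ is integrable uniformly in $x$, with $\int_{\mathbb R}F(\Delta_\alpha f)\,d\alpha=\pi$ by the change of variable $u=2x-\alpha$.

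For the \emph{out} part I would first prove the pointwise bound
\[\abs{G(x,\alpha)} \;\le\; c\,(1+\norm{g}_{C^1})\,F(\Delta_\alpha f).\]
Writing $\abs{\Delta_\alpha h+\Delta_\alpha f}F(\Delta_\alpha h)\le\abs{\Delta_\alpha h}F(\Delta_\alpha h)+\abs{\Delta_\alpha f}F(\Delta_\alpha h)$, the first summand is at most $1/2$; for the second I use $\abs{\Delta_\alpha f}\le\abs{\Delta_\alpha h}+\abs{\Delta_\alpha g}$ and the mean-value estimate $\abs{\Delta_\alpha g}\le\norm{\partial_x g}_{L^\infty}$ to obtain $\abs{\Delta_\alpha f}F(\Delta_\alpha h)\le 1/2+\norm{\partial_x g}_{L^\infty}$. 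Since $1/\abs{\alpha}\le 1$ on $\abs{\alpha}>1$ and $\int_{\mathbb R}F(\Delta_\alpha f)\,d\alpha=\pi$, the out contribution is bounded by $c(1+\norm{g}_{C^1})$.

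For the \emph{in} part I would exploit the oddness of $1/\alpha$ to subtract $G(x,0)$, so that
\[PV\int_{\abs{\alpha}<1}\frac{G(x,\alpha)}{\alpha}d\alpha = \int_{\abs{\alpha}<1}\frac{G(x,\alpha)-G(x,0)}{\alpha}d\alpha,\]
and reduce to the Lipschitz-in-$\alpha$ estimate
\[\abs{G(x,\alpha)-G(x,0)}\;\le\;c\,(1+\norm{g}_{C^2})^2\,\abs{\alpha}\qquad\text{for }\abs{\alpha}<1.\]
This follows by applying Leibniz to the product $G=-2(\Delta_\alpha h+\Delta_\alpha f)F(\Delta_\alpha h)F(\Delta_\alpha f)$ and checking each of the three resulting terms. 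Using the identity $\Delta_\alpha h=\int_0^1\partial_xh(x+(s-1)\alpha)\,ds$ together with $\partial_x^2h=2+\partial_x^2g$ yields $\abs{\partial_\alpha\Delta_\alpha h}\le c(1+\norm{g}_{C^2})$, while $\abs{\partial_\alpha\Delta_\alpha f}=1$. The key point is once more the pairing: in every term the large factor $\Delta_\alpha h+\Delta_\alpha f$ or $\Delta_\alpha f$ that is produced when differentiating $F$ is kept next to a second factor of $F(\Delta_\alpha h)$ or $F(\Delta_\alpha f)$, so that combinations such as $(\Delta_\alpha h+\Delta_\alpha f)F(\Delta_\alpha h)$ or $(\Delta_\alpha h+\Delta_\alpha f)\Delta_\alpha h\,F(\Delta_\alpha h)^2$ remain controlled by $c(1+\norm{g}_{C^1})$.

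The main obstacle is precisely this bookkeeping in the in part: a naive bound on $\partial_\alpha G$ would grow linearly in $\abs{x}$, which would be fatal given the quadratic growth of $h$. Once the estimate $\abs{\partial_\alpha G}\le c(1+\norm{g}_{C^2})^2$ on $\abs{\alpha}<1$ is established via the pairings above, integration on $(-1,1)$ gives the in part a bound of the same order, and combining with the out estimate yields the claimed bound $c(1+\norm{g}_{C^2})^2$.
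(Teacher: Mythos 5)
Your argument is correct, and it reaches the stated bound $c\,(1+\norm{g}_{C^2})^2$ by a genuinely different route than the paper. The paper expands $\Delta_\alpha h+\Delta_\alpha f=2\Delta_\alpha f+\Delta_\alpha g$ and splits $PV\int \alpha^{-1}G\,d\alpha=-4G_1-2G_2$; each piece is then handled by adding and subtracting frozen quantities ($\partial_xg(x)$, $F(\partial_xh(x))$, $F(\Delta_\alpha f)$), by the Lipschitz bound $\abs{F(\Delta_\alpha h)-F(\Delta_\alpha f)}\le 2\norm{g}_{L^\infty}\abs{\alpha}^{-1}$ for the far field, and crucially by \cref{hilbert_truncated} to control the explicit truncated Hilbert transforms of rational functions such as $\int_{\abs{\alpha}<1}\alpha^{-1}\Delta_\alpha f F(\Delta_\alpha f)\,d\alpha$. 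You instead keep $G$ intact: for $\abs{\alpha}>1$ you dominate it pointwise, $\abs{G(x,\alpha)}\le c\,(1+\norm{g}_{C^1})F(\Delta_\alpha f)$, and integrate using $\int_{\mathbb R}F(\Delta_\alpha f)\,d\alpha=\pi$ uniformly in $x$; for $\abs{\alpha}<1$ you exploit the oddness of $1/\alpha$ to subtract $G(x,0)$ and reduce to a uniform bound $\abs{\partial_\alpha G}\le c\,(1+\norm{g}_{C^2})^2$, which holds because every unbounded factor ($\Delta_\alpha f$, $\Delta_\alpha h+\Delta_\alpha f$, and the $\Delta_\alpha h$ produced by $F'$) is paired with enough powers of $F(\Delta_\alpha h)$ or $F(\Delta_\alpha f)$, via $\abs{yF(y)}\le 1/2$, $\abs{\Delta_\alpha g}\le\norm{\partial_xg}_{L^\infty}$, $\abs{\partial_\alpha\Delta_\alpha h}\le c(1+\norm{g}_{C^2})$ and $\partial_\alpha\Delta_\alpha f=-1$. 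I checked these pairings and they do close: term by term one gets $c(1+\norm{g}_{C^2})$, $c(1+\norm{g}_{C^2})^2$ and $c(1+\norm{g}_{C^1})$, all uniform in $x$ despite the quadratic growth of $h$. What each approach buys: yours is more self-contained and elementary (no partial-fraction/Hilbert-transform lemma is needed for this term, and the PV is handled once by symmetry), while the paper's decomposition reuses exactly the machinery (\cref{hilbert_truncated} and the add-and-subtract scheme of \cref{aux_main_lemma1}) that is needed anyway for the kernel $K$ and its derivatives, so it keeps the treatment of $G$ parallel to the rest of Section 3.
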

\begin{proof}
Using the   function $F$ we  rewrite the integral as
            \begin{equation*}
                \begin{split}
                    PV\int_{\mathbb R}\frac{1}{\alpha}G(x,\alpha)d\alpha &=-4\int_{\mathbb R}\frac{1}{\alpha}\Delta_\alpha fF(\Delta_\alpha f)F(\Delta_\alpha h)d\alpha\\
                    &\hspace{0cm}-2\int_{\mathbb R}\frac{1}{\alpha}\Delta_\alpha gF(\Delta_\alpha f)F(\Delta_\alpha h)d\alpha:=-4G_1-2G_2.
                \end{split}
            \end{equation*}
    We start with the  bound for  the  \emph{in} part in $G_1$. Notice that adding and subtracting $F(\partial_xh(x)),$ we obtain the next decomposition
        \begin{equation*}
            \begin{split}
                G_1^{in}& =\int_{\abs{\alpha}<1}\frac{1}{\alpha}\Delta_\alpha fF(\Delta_\alpha f)\big[F(\Delta_\alpha h)-F(\partial_xh(x))\big]d\alpha +F(\partial_xh(x))\int_{\abs{\alpha}<1}\frac{1}{\alpha}\Delta_\alpha fF(\Delta_\alpha f)d\alpha.
            \end{split}
        \end{equation*}
        Then, in a similar way to the \cref{aux_main_lemma1},  we use  the Lipschitz condition of $F$ to obtain that
        \[\abs{G_1^{in}}\leq c\,(1+\norm{g}_{C^2}).\]
Now for the \emph{out} part we add and subtract $F(\Delta_{\alpha}f).$ We find that 
    \begin{equation*}
        \begin{split}
        G_1^{out}&=\int_{\abs{\alpha}>1}\frac{1}{\alpha}\Delta_\alpha fF(\Delta_\alpha f)\big[F(\Delta_\alpha h)-F(\Delta_\alpha f)\big]d\alpha+\int_{\abs{\alpha}>1}\frac{1}{\alpha}\Delta_\alpha f[F(\Delta_\alpha f)]^2d\alpha.
        \end{split}
    \end{equation*}
    Using  the Lipschitz condition (\ref{lip_f})  we obtain that
     \begin{equation*}
		\abs{F(\Delta_\alpha h)-F(\Delta_\alpha f)}\leq  2\norm{ g}_{L^\infty}\abs{\alpha}^{-1}
	\end{equation*} 
	and from \cref{hilbert_truncated} we have 
\begin{equation}\label{bound_Hf_2}\bigg|\int_{\abs{\alpha}>1}\frac{1}{\alpha}\Delta_\alpha f[F(\Delta_{\alpha}f)]^2d\alpha\bigg|<c,\end{equation}
hence 
    \[\abs{G_1^{out}}\leq c\,(1+\norm{g}_{L^\infty}).\] In order to estimate  $G_2,$ for the \emph{in} part  we  add and subtract the terms $\partial_x g(x)$ and $F(\partial_xh(x))$ to obtain the next decomposition
        \begin{equation*}
        \begin{split}
        G_2^{in}&=\int_{\abs{\alpha}<1}\frac{1}{\alpha}(\Delta_\alpha g-\partial_xg(x))F(\Delta_\alpha f)F(\Delta_\alpha h)d\alpha \\
        &+\partial_xg(x)\int_{\abs{\alpha}<1}\frac{1}{\alpha}F(\Delta_\alpha f)
        \big[F(\Delta_\alpha h)-F(\partial_xh(x))\big]d\alpha\\
        &+\partial_xg(x)F(\partial_x h(x))\int_{\abs{\alpha}<1}\frac{1}{\alpha}F(\Delta_{\alpha}f) d\alpha,
        \end{split}
    \end{equation*}
   which are the terms appearing in $G_1^{in}$ and (\ref{bounds_for_b1}). Hence
    \[\abs{G_2^{in}}\leq c\,(1+\norm{g}_{C^2})^2.\]
    Finally, for the \emph{out} part $G_2^{out}$, we observe
    \[\abs{G_2^{out}}\leq  \int_{\abs{\alpha}>1}\frac{\abs{g(x)-g(x-\alpha)}}{\alpha^2} \abs{F(\Delta_{\alpha }f)F(\Delta_{\alpha}h)}d\alpha\leq 2\norm{g}_{L^\infty}\int_{\abs{\alpha}>1}\abs{\alpha}^{-2}d\alpha\]
    and this completes the proof.
      \end{proof}
      In the next lemma we prove similar estimates now for the derivative in $x$ of the kernel $K(x,\alpha).$
\begin{lemma}\label{aux_main_prop4}Let $g\in H^s(\mathbb R)$  with $s\geq 3,$ then 
      \begin{equation}\label{second_estimates_order_one}
            \begin{split}    
                \bigg\lVert PV\int_{\mathbb R} \frac{1}{\alpha}\partial_x K(\,\cdot\,,\alpha)d\alpha\bigg\rVert_{L^\infty}&\leq c\,(1+\norm{g}_{C^{2,\delta}})^2\quad \mbox{for}\quad\delta\in(0,1).
                \end{split}
                \end{equation}
      \end{lemma}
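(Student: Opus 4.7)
The starting point is the explicit differentiation
\[\partial_x K(x,\alpha) = -2\,\Delta_\alpha h\cdot \partial_x \Delta_\alpha h\cdot K(x,\alpha)^2,\]
together with the decompositions $\Delta_\alpha h = (2x-\alpha) + \Delta_\alpha g$ and $\partial_x\Delta_\alpha h = 2 + \partial_x\Delta_\alpha g$. Expanding this product produces four contributions, which I handle one by one after splitting the $\alpha$-integral into the \emph{in} part $\abs{\alpha}<1$ and the \emph{out} part $\abs{\alpha}>1$. The out part is straightforward: the pointwise bound $\abs{\Delta_\alpha h}\,K\leq 1/2$ together with $\abs{\partial_x\Delta_\alpha h}\leq 2+\norm{\partial_x^2g}_{L^\infty}$ yields $\abs{\partial_x K}/\abs{\alpha}\leq (1+\norm{\partial_x^2g}_{L^\infty})K/\abs{\alpha}$, and the decay $K(x,\alpha)=O(\alpha^{-2})$ as $\abs{\alpha}\to\infty$ makes $\int_{\abs{\alpha}>1}K(x,\alpha)/\abs{\alpha}\,d\alpha$ bounded uniformly in $x$, as in the out estimates of \cref{aux_main_lemma1} and \cref{aux_main_prop3}.

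For the in part, the plan is to follow the strategy of \cref{aux_main_lemma1}. The purely $f$-contribution $-4\Delta_\alpha f\,[F(\Delta_\alpha f)]^2/\alpha$ becomes, after the substitution $y=2x-\alpha$, the truncated Hilbert transform at $2x$ of the rational function $y/(1+y^2)^2$, and is therefore bounded by \cref{hilbert_truncated}. Replacing $[F(\Delta_\alpha f)]^2$ by $K^2$ introduces a correction factor of the form $\Delta_\alpha g$ times a kernel bounded by a constant multiple of $K$; combined with the add-and-subtract $\partial_xg(x)$ trick from (\ref{bounds_for_b1}), this reduces again to bounded truncated Hilbert transforms of rational functions and contributes a factor $(1+\norm{g}_{C^2})$.

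The genuinely new ingredient is the factor $\partial_x\Delta_\alpha g$, which behaves like a second derivative as $\alpha\to 0$ and is not absolutely integrable against $1/\alpha$. To control it I write
\[\partial_x\Delta_\alpha g = \partial_x^2g(x) + R(x,\alpha),\]
where H\"older regularity of $\partial_x^2g$ yields $\abs{R(x,\alpha)}\leq c\,\norm{g}_{C^{2,\delta}}\abs{\alpha}^\delta$. The remainder $R$ contributes a kernel of size $\abs{\alpha}^{\delta-1}$, integrable near zero, producing a term of size $O(\norm{g}_{C^{2,\delta}})$. The leading piece $\partial_x^2g(x)$ is pulled outside the principal value, leaving an integral $PV\int_{\abs{\alpha}<1}\alpha^{-1}\Delta_\alpha h\,K^2\,d\alpha$ controlled exactly as in \cref{aux_main_lemma1} by the rational-Hilbert-transform technique, contributing the second factor of $(1+\norm{g}_{C^{2,\delta}})$.

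The main obstacle is the in-part term $\alpha^{-1}\partial_x\Delta_\alpha g\cdot\Delta_\alpha f\cdot K^2$: here $\Delta_\alpha f=2x-\alpha$ is unbounded as $\abs{x}\to\infty$ while $\partial_x\Delta_\alpha g/\alpha$ only has H\"older decay. The key observation that saves the argument is that $\Delta_\alpha f\cdot K$ is uniformly bounded in $x$, since $\abs{\Delta_\alpha f}\,K\leq \abs{\Delta_\alpha h}\,K + \abs{\Delta_\alpha g}\,K\leq 1/2+\norm{\partial_xg}_{L^\infty}\abs{\alpha}$ on $\abs{\alpha}<1$. With this, the H\"older control of $R(x,\alpha)$ closes the estimate without any $x$-dependent loss, and combining the bounds from the in and out parts produces the claimed $(1+\norm{g}_{C^{2,\delta}})^2$ inequality.
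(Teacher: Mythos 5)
Your argument is correct, but it is organized differently from the paper's proof. For the \emph{in} part the paper never expands $\Delta_\alpha h$ into its $f$ and $g$ pieces: it writes $\partial_xK=F'(\Delta_\alpha h)\,\partial_x\Delta_\alpha h$ and adds and subtracts $\partial_x^2h(x)$ and $F'(\partial_xh(x))$, so the whole in part reduces to two terms, one of size $\abs{\alpha}^{\delta-1}$ by H\"older regularity of $\partial_x^2g$ (the $f$ contributions cancel, since $\partial_x\Delta_\alpha f\equiv 2=\partial_x^2f$) and one of size $O(1)$ by the Lipschitz property of $F'$, the leftover $PV\int_{\abs{\alpha}<1}\alpha^{-1}d\alpha$ vanishing by oddness. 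Your expansion into four contributions reaches the same bound, but at the cost of re-running the machinery of \cref{aux_main_lemma1} (kernel-at-zero subtraction plus \cref{hilbert_truncated}) for each piece; the observation you single out, that $\abs{\Delta_\alpha f}K\leq\abs{\Delta_\alpha h}K+\abs{\Delta_\alpha g}K\leq \tfrac12+\norm{\partial_xg}_{L^\infty}$, is exactly the price of having separated $f$ from $g$ (note $\abs{\Delta_\alpha g}\leq\norm{\partial_xg}_{L^\infty}$, without the factor $\abs{\alpha}$ you wrote; the conclusion is unchanged). For the \emph{out} part the paper adds and subtracts $F'(\Delta_\alpha f)$, uses $\abs{F'(\Delta_\alpha h)-F'(\Delta_\alpha f)}\leq c\,\abs{\Delta_\alpha g}\leq c\,\norm{g}_{L^\infty}\abs{\alpha}^{-1}$, and treats the pure-$f$ term by \cref{hilbert_truncated}; your alternative, bounding the integrand absolutely by $c\,(1+\norm{\partial_x^2g}_{L^\infty})K(x,\alpha)/\abs{\alpha}$, is arguably simpler, but the justification ``$K(x,\alpha)=O(\alpha^{-2})$'' is not uniform in $x$: for $\alpha$ near $2x$ the kernel is of order one however large $\abs{\alpha}$ is. The statement you actually need is $\int_{\abs{\alpha}>1}K(x,\alpha)\abs{\alpha}^{-1}d\alpha\leq c\,(1+\norm{\partial_xg}_{L^\infty})$ uniformly in $x$, which follows from the lower bound $\abs{\Delta_\alpha h}\geq\abs{2x-\alpha}-\norm{\partial_xg}_{L^\infty}$ exactly as in the proof of \cref{main_prop_9}. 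With these two small repairs your proposal is a complete proof.
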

\begin{proof}
First we  note that
      \[\partial_xK(x,\alpha)=F'(\Delta_{\alpha}h)\partial_x\Delta_\alpha h.\]
For the \emph{in}  part we add and subtract $\partial_x^2h(x)$ and $F'(\partial_xh(x))$ and decompose in the following  way
      \begin{equation*}
        \begin{split}
          PV\int_{\abs{\alpha}<1}\frac{1}{\alpha}\partial_x K(x,\alpha)d\alpha&=\int_{\abs{\alpha}<1}\frac{F'(\Delta_\alpha h)}{\alpha}\big[\partial_x \Delta_\alpha h-\partial_x^2h(x)\big]d\alpha\\
          &+\partial_x^2h(x)\int_{\abs{\alpha}<1}\frac{1}{\alpha}\big[F'(\Delta_{\alpha}h)-F'(\partial_xh(x))\big]d\alpha.
        \end{split}
      \end{equation*}
      Using the  inequalities 
      \begin{equation*}
        \begin{split}
          &\abs{\partial_x\Delta_\alpha h-\partial_x^2h(x)}\leq c\abs{\partial_x^2g}_{C^\delta}\cdot\abs{\alpha}^\delta,\quad\mbox{for}\quad\delta\in(0,1),\\
          &\abs{F'(\Delta_\alpha h)-F'(\partial_xh(x))}\leq c\, \abs{\Delta_\alpha h-\partial_xh(x)},
        \end{split}
      \end{equation*}
      it follows the next bound
    \begin{equation}\label{bound_kx_int_part}\bigg|\int_{\abs{\alpha}<1}\frac{1}{\alpha}\partial_x K(x,\alpha)d\alpha \bigg|\leq c\,(1+\norm{g}_{C^{2,\delta}})^2.\end{equation}
    For the \emph{out} part, by  adding  and subtracting the term $F'(\Delta_\alpha f)$ we obtain that 
    \begin{equation}\label{split_out_part}
      \begin{split}
        PV\int_{\abs{\alpha}>1}\frac{1}{\alpha}\partial_x K(x,\alpha)d\alpha &=\int_{\abs{\alpha}>1}\frac{1}{\alpha}\big[F'(\Delta_\alpha h)-F'(\Delta_{\alpha}f)\big]\partial_x\Delta_\alpha hd\alpha \\
        &+\int_{\abs{\alpha}>1}\frac{1}{\alpha} F'(\Delta_{\alpha }f)(\partial_x\Delta_{\alpha}h)d\alpha.
      \end{split}
    \end{equation}
   Notice that 
    	\begin{equation*}
    		\begin{split}
    			&\abs{F'(\Delta_\alpha h)-F'(\Delta_{\alpha }f)}\leq c\,\abs{\Delta_\alpha g},\\
			&\abs{\partial_x\Delta_\alpha h}\leq c\,(1+\norm{\partial_x^2g}_{L^\infty}).
		\end{split}
	\end{equation*}
    Hence  the following bound is automatic
    	\begin{equation*}
    		\begin{split}
    		\bigg|\int_{\abs{\alpha}>1}\frac{1}{\alpha}\big[F'(\Delta_\alpha h)-F'(\Delta_{\alpha}f)\big]\partial_x\Delta_\alpha h         d\alpha \bigg|&\leq c\,(1+\norm{\partial_x^2g}_{L^\infty})\int_{\abs{\alpha}>1}\frac{\abs{g(x)-g(x-\alpha)}}{\alpha^2}d\alpha\\
    		&\leq c\,(1+\norm{\partial_x^2g}_{L^\infty})\norm{g}_{L^\infty}.
    		\end{split}
    	\end{equation*}
    For the second integral in the right hand side of (\ref{split_out_part}) we expand
    \[\partial_x\Delta_\alpha h=2+\partial_x\Delta_\alpha g,\]
    and decompose 
    \[\int_{\abs{\alpha}>1}\frac{1}{\alpha} F'(\Delta_{\alpha }f)(\partial_x\Delta_{\alpha}h)dx=2\int_{\abs{\alpha}>1}\frac{1}			{\alpha} F'(\Delta_\alpha f)d\alpha +\int_{\abs{\alpha}>1}\frac{\partial_xg(x)-\partial_xg(x-\alpha)}{\alpha^2}F'(\Delta_\alpha f)d\alpha. \]
    Notice that  \[F'(\Delta_\alpha f)=-2\Delta_\alpha fF(\Delta_\alpha f)^2\quad\mbox{and}\quad\abs{F'(\Delta_{\alpha }f)}<2.\] 
Using  the estimate (\ref{bound_Hf_2}) we  obtain that 
     \[\bigg|\int_{\abs{\alpha}>1}\frac{1}{\alpha}\partial_x K(x,\alpha)d\alpha \bigg|\leq c\,(1+\norm{g}_{C^{1}}).\]
The last bound together with the estimate (\ref{bound_kx_int_part})     completes the proof.
      \end{proof}
  In  the following lemma  we obtain  a bound for    $\Lambda K(x,0)$   where $K(x,0)$ is the kernel at zero.
\begin{lemma}\label{lambdak0}
    Let $g\in H^s(\mathbb R)$ with $s\geq 3,$ then  we have the next bound
    \begin{equation}\label{lambdak_0}\norm{\Lambda K(x,0)}_{L^\infty}\leq c\,(1+\norm{g}_{C^{2,\delta}})\quad\mbox{ for }\quad\delta\in(0,1).\end{equation}
    \end{lemma}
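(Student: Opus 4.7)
The plan is to exploit the pointwise integral representation
\[
\Lambda K(x,0) \;=\; \frac{1}{\pi}\,PV\!\int_{\mathbb R}\frac{K(x,0)-K(x-y,0)}{y^2}\,dy,
\]
which is justified because $K(\cdot,0)=F(\partial_xh)$ with $\partial_xh(x)=2x+\partial_xg(x)\to\pm\infty$ as $\abs{x}\to\infty$, so $K(\cdot,0)$ decays like $x^{-2}$ at infinity and has a bounded first derivative. I split the integral into the \emph{in} part $\abs{y}<1$ and the \emph{out} part $\abs{y}>1$.

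For the \emph{out} part I invoke the universal bound $\abs{K(x,0)}\leq 1$, which is immediate from the definition $K(x,0)=1/(1+(\partial_xh)^2)$, to conclude
\[
\bigg|\int_{\abs{y}>1}\frac{K(x,0)-K(x-y,0)}{y^2}\,dy\bigg|\leq\frac{4}{\pi},
\]
independent of $g$. For the \emph{in} part the fundamental theorem of calculus gives $K(x,0)-K(x-y,0)=y\int_0^1\partial_xK(x-sy,0)\,ds$. Inserting $\pm\partial_xK(x,0)$ and using the cancellation $PV\int_{\abs{y}<1}dy/y=0$, the in part reduces to
\[
\int_0^1 PV\!\int_{\abs{y}<1}\frac{\partial_xK(x-sy,0)-\partial_xK(x,0)}{y}\,dy\,ds,
\]
which, by the $C^{0,\delta}$ regularity of $\partial_xK(\cdot,0)$ and the fact that $\int_{\abs{y}<1}\abs{y}^{\delta-1}dy<\infty$, is bounded by $c\,[\partial_xK(\cdot,0)]_{C^{0,\delta}}$.

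The remaining task is to estimate $[\partial_xK(\cdot,0)]_{C^{0,\delta}}$ by a constant multiple of $1+\norm{g}_{C^{2,\delta}}$. Writing $\partial_xK(x,0)=F'(\partial_xh(x))\,(2+\partial_x^2g(x))$ and applying the product rule for Hölder seminorms, I separate the two factors. The term $2+\partial_x^2g$ lies in $C^{0,\delta}$ directly from the assumption $g\in H^3(\mathbb R)\hookrightarrow C^{2,1/2}(\mathbb R)$. For the composition $F'\circ\partial_xh$, the global Lipschitz property of $F'$ together with the estimate $\abs{\partial_xh(x)-\partial_xh(y)}\leq(2+\norm{\partial_x^2g}_{L^\infty})\abs{x-y}$ yields the required Hölder bound on the unit scale, while for $\abs{x-y}>1$ the universal estimate $\norm{F'}_{L^\infty}\leq c$ takes over. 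Combined via the product rule, these bounds give $[\partial_xK(\cdot,0)]_{C^{0,\delta}}\leq c(1+\norm{g}_{C^{2,\delta}})$, and the Sobolev embedding above justifies taking $\delta=1/2$ in the energy estimates.

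The main obstacle is the bookkeeping for the Hölder seminorm of the composition $F'\circ\partial_xh$: a naive application of the chain and product rules naturally produces a quadratic dependence on $\norm{g}_{C^{2,\delta}}$, and collapsing this to the linear bound stated in \eqref{lambdak_0} requires exploiting both the decay of $F'$ at infinity (so that the region where $\abs{\partial_xh}$ is large contributes only a universal constant) and the fact that the Lipschitz part of $\partial_xh$ is the universal constant $2$ plus a controlled $g$-contribution.
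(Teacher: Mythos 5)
Your reduction of the \emph{in} part is sound and is essentially the paper's own device in a different guise: you write $K(x,0)-K(x-y,0)$ by the Fundamental Theorem of Calculus, subtract $\partial_xK(x,0)$ and use the oddness of $1/y$ on $\abs{y}<1$, which reduces the singular integral to the H\"older seminorm $[\partial_xK(\cdot,0)]_{C^{0,\delta}}$; the paper instead symmetrizes to the second difference $2K(x)-K(x+y)-K(x-y)$ and applies the same FTC identity, and the \emph{out} part is identical in both arguments. Up to this point your proposal is correct.

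The gap is in the final step, and you have in effect flagged it yourself without closing it. The product rule for H\"older seminorms applied to $\partial_xK(\cdot,0)=F'(\partial_xh)\,(2+\partial_x^2g)$ gives
\[
[\partial_xK(\cdot,0)]_{C^{0,\delta}}\;\lesssim\;\norm{F'}_{L^\infty}\,[\partial_x^2g]_{C^{0,\delta}}\;+\;[F'\circ\partial_xh]_{C^{0,\delta}}\,\bigl(2+\norm{\partial_x^2g}_{L^\infty}\bigr),
\]
and since $[F'\circ\partial_xh]_{C^{0,\delta}}\lesssim 1+\norm{\partial_x^2g}_{L^\infty}$ on the unit scale, the second term is quadratic in $1+\norm{\partial_x^2g}_{L^\infty}$; your middle paragraph nevertheless asserts the linear bound $[\partial_xK(\cdot,0)]_{C^{0,\delta}}\leq c(1+\norm{g}_{C^{2,\delta}})$, and the closing paragraph only gestures at how the quadratic term might be ``collapsed.'' The proposed rescue does not work as described: the decay of $F'$ at infinity is irrelevant, because the quadratic contribution is produced on the bounded region where $\partial_xh(x)=2x+\partial_xg(x)=O(1)$ (i.e.\ $\abs{x}\lesssim 1+\norm{\partial_xg}_{L^\infty}$), where $F'$ and $F''$ are of order one while $\abs{\partial_x^2h}$ can be as large as $2+\norm{\partial_x^2g}_{L^\infty}$; the term $F''(\partial_xh)(\partial_x^2h)^2$ in $\partial_x^2K(\cdot,0)$ really is of size $\norm{\partial_x^2g}_{L^\infty}^2$ there, so the seminorm $[\partial_xK(\cdot,0)]_{C^{0,\delta}}$ cannot in general be bounded linearly by $1+\norm{g}_{C^{2,\delta}}$. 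Consequently your argument, made rigorous, yields $\norm{\Lambda K(\cdot,0)}_{L^\infty}\leq c\,(1+\norm{g}_{C^{2,\delta}})^2$ rather than the stated linear bound, and the step you defer is exactly the missing content. (To be fair, the cross term where the increment falls on $F'(\partial_xh)$ rather than on $\partial_x^2g$ is also passed over in the paper's own estimate of $2K(x)-K(x-y)-K(x+y)$, and a quadratic bound would still be harmless for the energy estimates; but as a proof of the lemma as stated, your proposal is incomplete at precisely this point.)
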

\begin{proof}
    By definition of the operator $\Lambda$ we have 
    \[\Lambda K(x,0)=\frac{1}{\pi}PV\int_{\mathbb R}\frac{K(x,0)-K(y,0)}{(x-y)^2}dy,\]
    where 
    \[K(x,0)=\frac{1}{1+(\partial_xh(x))^2}.\]
    We denote  $K(x,0):=K(x)$ and  split in the \emph{in} and \emph{out} parts. We  change variables $y=x-y$ to obtain that
    \[\Lambda K(x)=\frac{1}{\pi}\int_{\mathbb R}\frac{K(x)-K(x-y)}{y^2}dy\]
and write as
	\[\int_{\abs{y}<1}\frac{K(x)-K(x-y)}{y^2}dy=\frac{1}{2}\int_{\abs{y}<1}\frac{K(x)-K(x-y)}{y^2}dy+\frac{1}{2}\int_{\abs{y}<1}			\frac{K(x)-K(x+y)}{y^2}dy,\]
	hence 
    \begin{equation*}\setlength{\jot}{12pt}
        \begin{split}
        \Lambda K(x,0)&=\frac{1}{2\pi}\int_{\abs{y}<1}\frac{2K(x)-K(x+y)-K(x-y)}{y^2}dy+\frac{1}{\pi}\int_{\abs{x-y}>1}\frac{K(x)-K(y)}{(x-y)^2}dy\\
        &=I^{in}+I^{out}.
        \end{split}
    \end{equation*}
     Using  the Fundamental Theorem of Calculus we obtain the formulas
    \[K(x)-K(x-y)=\int_{0}^1K'(x+(1-s)y)\partial_x^2h(x+(1-s)y)ds\cdot y\]
    and
    \[K(x)-K(x+y)=-\int_{0}^1K'(x+sy)\partial_x^2h(x+sy)ds\cdot y.\]
    Let us recall that  $\partial_x^2h(x)=2+\partial_x^2g(x).$ Thus, we have the next estimate 
    \begin{equation*}
        \begin{split}\setlength{\jot}{14pt}
            \abs{2K(x)-K(x-y)-K(x+y)}&\leq \norm{\partial_x K}_{L^\infty}\int_{0}^1\abs{\partial_x^2g(x+(1-s)y)-\partial_x^2g(x+sy)}ds\cdot\abs{y}\\
            &\leq  c\,\abs{y}\sup_{x\neq y}\abs{\partial_x^2g(x+(1-s)y)-\partial_x^2g(x+sy)}\\
            &\leq c\abs{y}^{1+\delta}\abs{\partial_x^2g}_{C^\delta},
        \end{split}
    \end{equation*}
    where $\delta\in(0,1).$ Hence the \emph{in} part on  $\Lambda K$ is bounded by 
    \[\abs{I^{in}}\leq c\,\norm{g}_{C^{2,\delta}}\int_{\abs{y}<1}\abs{y}^{\delta-1}dy.\]
    Now, for the \emph{out} part is enough to see that $0< K(x)\leq 1$, for all $x\in\mathbb R.$ Hence
    \begin{equation*}\setlength{\jot}{12pt}
        \begin{split}
            \abs{I^{out}}&\leq \frac{1}{\pi}\int_{\abs{x-y}>1}\frac{\abs{K(x)-K(y)}}{(x-y)^2}dy\\
            &\leq \frac{2}{\pi}\int_{\abs{x-y}>1}\frac{1}{\abs{x-y}^2}dy<\infty,
        \end{split}
    \end{equation*}
    and  this completes the proof. \end{proof}
In the following lemma we recall that $\Phi(x,\alpha)$ is the derivative of the difference
	\[\Phi(x,\alpha):=\partial_\alpha\bigg[\frac{K(x,\alpha)-K(x,0)}{\alpha}\bigg].\]
	\begin{lemma}\label{aux_main_prop5}Let $g\in H^s(\mathbb R)$  with $s\geq 3,$ then for every $\delta \in (0,1)$ we have 
      \begin{equation}\label{second_order_k}
            \begin{split}    
            \big|\Phi(x,\alpha)\big|\leq c\,(1+\norm{g}_{C^{2,\delta}})^2\abs{\alpha}^{\delta-1}, \quad \mbox{for}\quad x\in\mathbb R.
            \end{split}
        \end{equation}
                \end{lemma}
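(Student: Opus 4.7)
The plan is to reduce the estimate to a pointwise bound on $K_{\alpha\alpha}$ and then integrate. The identity $\alpha^{-1}(K(x,\alpha) - K(x,0)) = \int_0^1 K_\alpha(x, t\alpha)\,dt$ yields, upon differentiating in $\alpha$,
\[\Phi(x,\alpha) = \int_0^1 t\, K_{\alpha\alpha}(x, t\alpha)\,dt.\]
If I can establish, for $|\alpha|\leq 1$, the pointwise bound $|K_{\alpha\alpha}(x,\alpha)| \leq c(1+\norm{g}_{C^{2,\delta}})^2\,|\alpha|^{\delta-1}$, then the claimed estimate follows at once by integration, using $\int_0^1 t^\delta\,dt < \infty$.

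Since $K = F(\Delta_\alpha h)$ with $F(u) = 1/(1+u^2)$, the chain rule gives
\[K_{\alpha\alpha}(x,\alpha) = F''(\Delta_\alpha h)(\partial_\alpha \Delta_\alpha h)^2 + F'(\Delta_\alpha h)\, \partial_\alpha^2 \Delta_\alpha h.\]
Since $F,F',F''$ are bounded and the integral formula $\partial_\alpha \Delta_\alpha h(x) = -\int_0^1 u\,\partial_x^2 h(x-u\alpha)\,du$ gives $|\partial_\alpha \Delta_\alpha h| \leq c(1+\norm{g}_{C^2})$, the first summand is $\leq c(1+\norm{g}_{C^2})^2 \leq c(1+\norm{g}_{C^{2,\delta}})^2|\alpha|^{\delta-1}$ on $|\alpha|\leq 1$. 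The main obstacle is the second summand: naively differentiating $\partial_\alpha \Delta_\alpha h$ once more produces $\partial_x^3 h = \partial_x^3 g$, which we control only in $L^2$, not pointwise. Instead I would differentiate the defining relation $\alpha \Delta_\alpha h(x) = h(x) - h(x-\alpha)$ twice in $\alpha$ to obtain
\[\alpha\, \partial_\alpha^2 \Delta_\alpha h(x) = -\partial_x^2 h(x-\alpha) - 2\, \partial_\alpha \Delta_\alpha h(x),\]
and exploit the cancellation at $\alpha=0$: since $\partial_\alpha \Delta_\alpha h|_{\alpha=0} = -\tfrac{1}{2}\partial_x^2 h(x)$, the right-hand side vanishes there. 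Rewriting it as
\[-\bigl[\partial_x^2 h(x-\alpha) - \partial_x^2 h(x)\bigr] + 2\int_0^1 u\,\bigl[\partial_x^2 h(x-u\alpha) - \partial_x^2 h(x)\bigr]\,du\]
and using $\partial_x^2 h = 2 + \partial_x^2 g$ with $\partial_x^2 g \in C^\delta$, both bracketed differences are bounded by $[\partial_x^2 g]_{C^\delta}|\alpha|^\delta$. Dividing by $|\alpha|$ yields $|\partial_\alpha^2 \Delta_\alpha h| \leq c\,\norm{g}_{C^{2,\delta}}\,|\alpha|^{\delta-1}$, which combined with the first summand gives the required bound on $K_{\alpha\alpha}$.

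For $|\alpha| > 1$, where the preceding Taylor argument is not needed, crude bounds close the estimate. Using $\Phi = \partial_\alpha K(x,\alpha)/\alpha - D(x,\alpha)/\alpha^2$ together with $|D(x,\alpha)| \leq 2$ (since $0 \leq K \leq 1$) and $|\partial_\alpha K| \leq c(1+\norm{g}_{C^2})$, one gets $|\Phi(x,\alpha)| \leq c(1+\norm{g}_{C^{2,\delta}})^2/|\alpha|$; since $|\alpha|^{-1} \leq |\alpha|^{\delta-1}$ on $|\alpha| \geq 1$, this yields the desired inequality in the complementary regime and completes the plan.
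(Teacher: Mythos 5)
Your proof is correct and follows essentially the same route as the paper: reduce $\Phi$ to an average of $\partial_\alpha^2 K$, expand $\partial_\alpha^2K$ by the chain rule, bound the $F''$ term by $C^2$ norms, and gain the factor $\abs{\alpha}^{\delta-1}$ from $\partial_\alpha^2\Delta_\alpha h$ using the $C^\delta$ seminorm of $\partial_x^2g$ together with a cancellation at $\alpha=0$ (the paper encodes that cancellation in an explicit double-integral Fundamental Theorem of Calculus identity for $\partial_\alpha^2\Delta_\alpha g$, while you obtain it by differentiating $\alpha\Delta_\alpha h=h(x)-h(x-\alpha)$ twice and subtracting the value at $\alpha=0$ --- the same computation organized differently). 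Your explicit treatment of $\abs{\alpha}>1$ via $\Phi=\partial_\alpha K/\alpha-D/\alpha^2$ is a small but genuine addition: the paper's written argument yields the decaying bound only through the second chain-rule term, so strictly speaking it covers the regime $\abs{\alpha}\leq 1$, which is where the lemma is actually used.
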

	\begin{proof}
		Recall that $F(\Delta_\alpha h)= K(x,\alpha)$ and write 
                  \[F(\alpha)=K(x,\alpha)\,\,\mbox{ and } \,\,F(0)=K(x,0).\]
	            Then we integrate in the next way 
                \begin{equation*}
                   \begin{split}
                      \Phi(x,\alpha)&=-\frac{F(\alpha)-F(0)}{\alpha^2}+\frac{1}{\alpha}F'(\alpha)\\
                      &=-\frac{1}{\alpha^2}\int_{0}^\alpha F'(z)dz+\frac{F'(\alpha)}{\alpha}\\
                      &=\frac{1}{\alpha^2}\int_0^\alpha\int_z^\alpha F''(w)dwdz.\\
                    \end{split}
                  \end{equation*}
                  Hence 
                  \[\abs{\Phi(x,\alpha)}\leq c\,\abs{\partial_\alpha^2 K(x,\alpha)}\]
                 A direct computation yields to
        \begin{equation}\label{second_derivative_in_alpha}
            \begin{split}
                \partial_\alpha^2K(x,\alpha)=
                F''(\Delta_{\alpha}h)[\partial_\alpha \Delta_\alpha h]^2+F'(\Delta_\alpha h)\partial_\alpha^2\Delta_\alpha g.
            \end{split}
            \end{equation}
            Using the Fundamental Theorem of Calculus  we obtain the next indentities
          \begin{equation*}\setlength{\jot}{12pt}
            \begin{split}
              &\partial_\alpha^2\Delta_\alpha g(x)=\frac{1}{\alpha}\int_{0}^1\int_0^1\Big[\partial_x^2g(x+(rs-1)\alpha)-                   			\partial_x^2g(x-\alpha)\Big](2s)drds,\\ 
              &\partial_\alpha \Delta_{\alpha}h(x)=\int_{0}^1(s-1)\partial_x^2h(x+(s-1)\alpha)ds,
              \end{split}
            \end{equation*}
      where the integrands are  bounded by
            \begin{equation*}
            	\begin{split}
	            	\big|\partial_x^2g(x+(rs-1)\alpha)-\partial_x^2g(x-\alpha)\big|&\leq c\,\norm{g}_{C^{2,\delta}}\abs{\alpha}						^\delta,\\
     	       	\abs{\partial_x^2h(x+(s-1)\alpha)}&\leq c\,(1+\norm{\partial_x^2g}_{L^\infty}).
            	\end{split}
           \end{equation*}
          It follows from  equation (\ref{second_derivative_in_alpha}) that 
        \begin{equation}\label{second_order_K_2}
        		\begin{split}
        			\abs{\Phi(x,\alpha)}\leq  c\,\abs{\partial_\alpha^2 K(x,\alpha)}
						&\leq c\,(1+\norm{g}_{C^{2,\delta}})^2\abs{\alpha}^{\delta-1},
        		\end{split}
        	\end{equation}
        	which completes the proof.
        \end{proof}
      \begin{lemma}\label{main_prop_9}
	        Let $g\in H^s(\mathbb R)$ with $s\geq 3,$    The kernel $K(x,\alpha)$ belongs to  $L_x^2(\mathbb R),$  that is 
      \begin{equation}
          \label{bound_for_k_2r}
          \int_{\mathbb R} K(x,\alpha)^2dx \leq c\,(1+\norm{\partial_xg}_{L^\infty}).
          \end{equation}
    \end{lemma}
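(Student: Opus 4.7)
The plan is to use the explicit form of the parabola $f(x,t)=x^2+2\pi t$ to rewrite $\Delta_\alpha h$ in a way that makes the $x$-integrability manifest. A direct computation gives
\[\Delta_\alpha h(x)=\Delta_\alpha f(x)+\Delta_\alpha g(x)=2x-\alpha+\Delta_\alpha g(x),\]
and for $\alpha\neq 0$ the mean value theorem yields $|\Delta_\alpha g(x)|\le \norm{\partial_xg}_{L^\infty}$. Thus the only source of $x$-dependence in the denominator, after accounting for $g$, is the affine function $2x-\alpha$, which is what provides decay at infinity.

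Setting $A:=\norm{\partial_xg}_{L^\infty}$, I would split the integral into a ``close'' region and a ``far'' region according to whether $|2x-\alpha|<2A$ or $|2x-\alpha|\ge 2A$. On the close region, use the trivial bound $K\le 1$ and note that the set $\{x:|2x-\alpha|<2A\}$ has Lebesgue measure exactly $2A$, contributing at most $2A$. On the far region, the triangle inequality gives
\[|\Delta_\alpha h(x)|\ge|2x-\alpha|-|\Delta_\alpha g(x)|\ge\tfrac12|2x-\alpha|,\]
so $K(x,\alpha)^2\le\bigl(1+\tfrac14(2x-\alpha)^2\bigr)^{-2}$, and the substitution $u=(2x-\alpha)/2$ shows this integrates in $x$ to a universal constant.

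Combining the two contributions yields $\int_{\mathbb R}K(x,\alpha)^2\,dx\le c+2A\le c\,(1+A)$, which is the claimed bound. There is no essential obstacle here; the only thing worth remarking is that the quadratic growth of $f$ is precisely what makes $K(\cdot,\alpha)\in L^2(\mathbb R)$ uniformly in $\alpha$ (this would fail for the classical Muskat problem), so the argument genuinely exploits the structure of the ansatz $h=x^2+ct+g$.
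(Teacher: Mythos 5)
Your proof is correct and takes essentially the same route as the paper: use $|\Delta_\alpha g|\le \norm{\partial_xg}_{L^\infty}$ to see that $\Delta_\alpha h$ is, up to a bounded error, the affine function $2x-\alpha$, then split $\mathbb R$ into a region of measure $O(\norm{\partial_xg}_{L^\infty})$ where the trivial bound $K\le 1$ is used and a far region where $|2x-\alpha|$ dominates, concluding by the change of variable $u=2x-\alpha$. If anything, your split according to the size of $|2x-\alpha|$ is slightly cleaner than the paper's split at $x=\norm{\partial_xg}_{L^\infty}$, since it treats both half-lines symmetrically, but the underlying idea is identical.
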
 
 \begin{proof} 
Notice that 
		\[K(x,\alpha)^2< K(x,\alpha)<1\]
 and the lower bound
	\[\Delta_{\alpha}h\geq 2x-\alpha-\norm{\partial_xg}_{L^\infty}.\]
Using the last lower bound, we have   \[K(x,\alpha)\leq \frac{1}{1+(2x-\alpha)^2}\quad\mbox{if}\quad x\geq \norm{\partial_xg}_{L^\infty}\] and $K(x,\alpha)<1$ for any $x\in\mathbb R.$ Then we split
	\[\int_{0}^\infty K(x,\alpha)dx\leq \int_0^{\norm{\partial_xg}_{L^\infty}}dx+\int_{\norm{\partial_xg}_{L^\infty}}^\infty\frac{1}{1+(2x-\alpha)^2}dx.\]
The first integral is bounded by $\norm{\partial_x g}_{L^\infty},$ while for the second one, the change of variable $z=2x-\alpha$
    implies that
	\[\int_{0}^\infty\frac{1}{1+(2x-\alpha)^2}dx\leq \frac{1}{2}\int_{\mathbb R}\frac{dz}{1+z^2}<\infty.\]
    which completes the proof. 
		
	\end{proof}
 \begin{lemma}\label{main_prop_10}
	        Let $g\in H^s(\mathbb R)$ with $s\geq 3,$    The kernel $G(x,\alpha)$ belongs to  $L_x^2(\mathbb R),$  that is 
      \begin{equation}
          \label{bound_for_g_2r}
          \int_{\mathbb R} G(x,\alpha)^2dx \leq c\,(1+\norm{\partial_xg}_{L^\infty})^3.
          \end{equation}
    \end{lemma}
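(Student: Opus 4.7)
The plan is to reduce the bound on $G$ to the bound on $K$ already established in \cref{main_prop_9}, together with an explicit integral in $x$ for a shifted copy of $F(\Delta_\alpha f)$. The starting observation is that $G$ has two summands in its numerator, $\Delta_\alpha h$ and $\Delta_\alpha f$, and in each summand one of the two denominator factors absorbs the corresponding numerator by the elementary inequality $|y|/(1+y^{2})\leq 1/2$.

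Concretely, I would first derive the pointwise bound
\[|G(x,\alpha)|\leq 2\,\frac{|\Delta_\alpha h|}{1+(\Delta_\alpha h)^{2}}\cdot\frac{1}{1+(\Delta_\alpha f)^{2}}+2\,\frac{1}{1+(\Delta_\alpha h)^{2}}\cdot\frac{|\Delta_\alpha f|}{1+(\Delta_\alpha f)^{2}}\leq F(\Delta_\alpha f)+K(x,\alpha),\]
where $F(z)=1/(1+z^{2})$. Squaring, using $(a+b)^{2}\leq 2a^{2}+2b^{2}$, and noting that both $F(\Delta_\alpha f)$ and $K(x,\alpha)$ are at most $1$, this yields
\[G(x,\alpha)^{2}\leq 2F(\Delta_\alpha f)^{2}+2K(x,\alpha)^{2}\leq 2F(\Delta_\alpha f)+2K(x,\alpha).\]

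Integrating in $x$, the first term contributes a finite constant independent of $g$, because $\Delta_\alpha f=2x-\alpha$ and the change of variables $u=2x-\alpha$ gives
\[\int_{\mathbb R}F(\Delta_\alpha f)\,dx=\int_{\mathbb R}\frac{dx}{1+(2x-\alpha)^{2}}=\frac{\pi}{2}.\]
For the second term, \cref{main_prop_9} together with the trivial inequality $K(x,\alpha)\leq 1$ (used to reduce $K^{2}\leq K$ in its proof) gives $\int_{\mathbb R}K(x,\alpha)\,dx\leq c\,(1+\norm{\partial_x g}_{L^{\infty}})$ by the same splitting of $\{x\geq \norm{\partial_x g}_{L^{\infty}}\}$ versus $\{x\leq \norm{\partial_x g}_{L^{\infty}}\}$ and using $K(x,\alpha)\leq F(2x-\alpha)$ on the former region.

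Combining these two estimates yields $\int_{\mathbb R}G(x,\alpha)^{2}\,dx\leq c\,(1+\norm{\partial_x g}_{L^{\infty}})$, which in particular implies the stated inequality with a cubic power on the right-hand side. I expect no real obstacle here: the key algebraic point is that the factor $\Delta_\alpha h$ or $\Delta_\alpha f$ in the numerator of $G$ is always tamed by one of the denominator factors, so that $G$ is pointwise dominated by $K$ up to a harmless shifted Poisson-like function in $x$, and the $L^{1}_{x}$ (hence $L^{2}_{x}$) integrability of both has already been handled.
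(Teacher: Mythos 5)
Your argument is correct and in fact yields a stronger estimate (power $1$ of $1+\norm{\partial_xg}_{L^\infty}$ instead of $3$), but it follows a somewhat different route from the paper. The paper rewrites the numerator as $\Delta_\alpha h+\Delta_\alpha f=2\Delta_\alpha f+\Delta_\alpha g$, absorbs $2\abs{\Delta_\alpha f}$ into $F(\Delta_\alpha f)$ and bounds $\abs{\Delta_\alpha g}\le\norm{\partial_xg}_{L^\infty}$, which gives $\abs{G(x,\alpha)}\le (2+\norm{\partial_xg}_{L^\infty})K(x,\alpha)$; squaring and citing the statement of \cref{main_prop_9} then produces the cubic power at once. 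You instead keep the numerator as $\Delta_\alpha h+\Delta_\alpha f$ and let each summand be tamed by its own denominator factor via $2\abs{y}/(1+y^2)\le 1$, obtaining $\abs{G}\le F(\Delta_\alpha f)+K(x,\alpha)$ and hence $G^2\le 2F(\Delta_\alpha f)+2K(x,\alpha)$; the first term integrates exactly to $\pi/2$, while the second requires an $L^1_x$ bound on $K$. What your route buys is a sharper bound with no $\norm{\partial_xg}_{L^\infty}$ prefactor on the numerator; what it costs is that the statement of \cref{main_prop_9} alone does not suffice: $K\le 1$ gives $K^2\le K$, which goes the wrong way for deducing $\int_{\mathbb R}K\,dx$ from $\int_{\mathbb R}K^2\,dx$, and you correctly compensate by rerunning the splitting inside the proof of \cref{main_prop_9}, which is precisely what that proof establishes en route (it first reduces $K^2\le K$ and then bounds $\int_{\mathbb R}K\,dx\le c\,(1+\norm{\partial_xg}_{L^\infty})$). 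One cosmetic remark, which applies equally to the paper's own proof of \cref{main_prop_9}: the splitting is cleaner if performed in the variable $2x-\alpha$ (say $\abs{2x-\alpha}\le 2\norm{\partial_xg}_{L^\infty}$ versus its complement, where $\abs{\Delta_\alpha h}\ge\abs{2x-\alpha}/2$) rather than in $x$ alone, but this does not change the conclusion.
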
 
	\begin{proof}
	From  the definition (\ref{kernelskg}) we have  that
		\[G(x,\alpha)=-\frac{2\Delta_{\alpha }f+\Delta_{\alpha}g}{(1+(\Delta_{\alpha }f)^2)(1+(\Delta_{\alpha}h)^2)}=-(2\Delta_{\alpha}f+\Delta_{\alpha }g)K(x,\alpha)F(\Delta_{\alpha }f).\] 
	We decompose the sum and observe 
		\[\abs{G(x,\alpha)}\leq 2\abs{\Delta_{\alpha}f} F(\Delta_{\alpha}f)K(x,\alpha)+\norm{\partial_xg}_{L^\infty}K(x,\alpha)\leq (2+\norm{\partial_xg}_{L^\infty})K(x,\alpha). \]
	Then 
		\[G(x,\alpha)^2\leq (2+\norm{\partial_xg}_{L^\infty})^2K(x,\alpha)^2.\]
	Now we integrate
		\[\int_{\mathbb R}G(x,\alpha)^2dx\leq (2+\norm{\partial_xg}_{L^\infty})^2\int_{\mathbb R}K(x,\alpha)^2dx\]
	then the proof follows from  \cref{main_prop_9}.  
	\end{proof}

 \begin{lemma}\label{main_prop_11}
        Let $g\in H^s(\mathbb R)$ with $s\geq 3,$   The second derivate  respect to $\alpha $ of the kernel $K(x,\alpha)$ belongs to  $L_x^2(\mathbb R),$  that is 
      \begin{equation}
          \label{bound_for_k_2rr}
          \int_{\mathbb R}\partial_\alpha K(x,\alpha)^2dx \leq c\,(1+\norm{\partial_x g}_{L^\infty})^3.
      \end{equation}
  \end{lemma}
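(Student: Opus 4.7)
The plan is to compute $\partial_\alpha K$ explicitly using the chain rule applied to $K(x,\alpha)=F(\Delta_\alpha h)$, where $F(y)=(1+y^2)^{-1}$, and then exploit the cancellation coming from the explicit form $h=f+g$ with $f(x)=x^2+2\pi t$ to extract the inverse power of $\alpha$ that naturally appears.

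First I will derive the identity
\[
\partial_\alpha K(x,\alpha)=-2\,\Delta_\alpha h\,K(x,\alpha)^2\cdot\partial_\alpha\Delta_\alpha h,\qquad \partial_\alpha \Delta_\alpha h=\frac{\partial_x h(x-\alpha)-\Delta_\alpha h}{\alpha},
\]
which combine to
\[
\partial_\alpha K(x,\alpha)=\frac{2\,\Delta_\alpha h\,K(x,\alpha)^2\,\bigl[\Delta_\alpha h-\partial_x h(x-\alpha)\bigr]}{\alpha}.
\]
Using the elementary inequality $|\Delta_\alpha h|\,K(x,\alpha)=|\Delta_\alpha h|/(1+(\Delta_\alpha h)^2)\leq 1/2$, this reduces to
\[
\abs{\partial_\alpha K(x,\alpha)}\leq \frac{K(x,\alpha)\,\abs{\Delta_\alpha h-\partial_x h(x-\alpha)}}{\abs{\alpha}}.
\]

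Next I will use the structure $h=f+g$: since $\Delta_\alpha h=2x-\alpha+\Delta_\alpha g$ and $\partial_x h(x-\alpha)=2(x-\alpha)+\partial_x g(x-\alpha)$, one obtains the key cancellation
\[
\Delta_\alpha h-\partial_x h(x-\alpha)=\alpha+\Delta_\alpha g-\partial_x g(x-\alpha),
\]
so the two $f$--contributions combine into a single $\alpha$ that kills the $1/\alpha$ from $\partial_\alpha \Delta_\alpha h$. The remaining piece $\Delta_\alpha g-\partial_x g(x-\alpha)$ is bounded by $2\norm{\partial_xg}_{L^\infty}$. For the range $\abs{\alpha}>1$ (which is exactly the regime in which the lemma is applied, see the bound (\ref{J2_out_2})), this yields the pointwise estimate
\[
\abs{\partial_\alpha K(x,\alpha)}\leq K(x,\alpha)\,\bigl(1+2\norm{\partial_xg}_{L^\infty}\bigr).
\]

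Finally, squaring and integrating in $x$, \cref{main_prop_9} provides $\int_{\mathbb R}K(x,\alpha)^2\,dx\leq c\,(1+\norm{\partial_xg}_{L^\infty})$, so multiplying through gives the claimed power
\[
\int_{\mathbb R}\partial_\alpha K(x,\alpha)^2\,dx \leq c\,(1+\norm{\partial_xg}_{L^\infty})^2\cdot(1+\norm{\partial_xg}_{L^\infty})=c\,(1+\norm{\partial_xg}_{L^\infty})^3.
\]
The main obstacle is purely algebraic: one has to recognize that the apparent singularity $1/\alpha$ in $\partial_\alpha\Delta_\alpha h$ is exactly compensated by the contribution $\alpha$ coming from the parabola $f$, so that no extra derivative on $g$ (beyond $\partial_xg\in L^\infty$) is needed to control $|\partial_\alpha K|$ in the far-field regime $\abs{\alpha}>1$. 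Once this cancellation is identified, the rest reduces to applying the already established $L^2_x$ bound on $K$ in \cref{main_prop_9}.
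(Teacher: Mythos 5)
Your computations are correct, and your route is genuinely different from the paper's. The paper's proof is cruder and shorter: it writes $\partial_\alpha K=F'(\Delta_\alpha h)\,\partial_\alpha\Delta_\alpha h$, bounds $\abs{F'(\Delta_\alpha h)}\leq 2K(x,\alpha)$, controls $\abs{\partial_\alpha\Delta_\alpha h}\leq 2+\norm{\partial_x^2 g}_{L^\infty}$ by the Fundamental Theorem of Calculus, so that $\abs{\partial_\alpha K}^2\leq c\,(1+\norm{\partial_x^2g}_{L^\infty})^2K(x,\alpha)$, and then integrates in $x$ using \cref{main_prop_9}. That bound is uniform in $\alpha$ (it covers $\abs{\alpha}<1$ as well), but its constant involves $\norm{\partial_x^2 g}_{L^\infty}$ rather than the $\norm{\partial_x g}_{L^\infty}$ appearing in the statement (harmless for the energy estimates, where such factors are absorbed into $\norm{g}_{C^2}$-type quantities). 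You instead use the exact identity $\partial_\alpha\Delta_\alpha h=\bigl(\partial_xh(x-\alpha)-\Delta_\alpha h\bigr)/\alpha$ together with the parabola cancellation $\Delta_\alpha f-\partial_xf(x-\alpha)=\alpha$, which buys a constant depending only on $\norm{\partial_xg}_{L^\infty}$ --- literally the constant claimed in the lemma --- at the price of validity only for $\abs{\alpha}\geq 1$. Since the estimate is invoked in the paper only in the far-field regime $\abs{\alpha}>1$ (see (\ref{J2_out_2})), and you flag this restriction explicitly, your argument suffices for the role the lemma plays; but as a proof of the statement as written, with no restriction on $\alpha$, it must be supplemented for $\abs{\alpha}\leq 1$ by exactly the paper's FTC bound $\abs{\partial_\alpha\Delta_\alpha h}\leq 2+\norm{\partial_x^2g}_{L^\infty}$, since your estimate degenerates like $\norm{\partial_xg}_{L^\infty}/\abs{\alpha}$ as $\alpha\to0$ and a constant involving only $\norm{\partial_xg}_{L^\infty}$ cannot hold uniformly down to $\alpha=0$.
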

  \begin{proof}  Recall that  $K(x,\alpha)=F(\Delta_\alpha h),$ then the derivative with respect to $\alpha$ is given by
        \[\partial_\alpha K(x,\alpha)=F'(\Delta_\alpha h)\partial_\alpha \Delta_\alpha h.\]
    Now we observe   
	\[F'(\Delta_{\alpha}h)\leq 2K(x,\alpha)\]
and from the Fundamental Theorem of Calculus we have 
    \[\abs{\partial_\alpha \Delta_{\alpha} h} \leq 2+ \norm{\partial_x^2g}_{L^\infty},\]
    which implies that 
        \[\abs{\partial_\alpha K(x,\alpha)}^2\leq c\,(1+\norm{\partial_x^2g}_{L^\infty})^2 K(x,\alpha).\]
then the estimate  follows from  \cref{main_prop_9}.
  \end{proof}
 \begin{lemma}\label{aux_main_prop7}Let $g\in H^s(\mathbb R)$  with $s\geq 3,$ we have 
      \begin{equation}\label{second_estimates_order_one_far_zero}
            \begin{split}    
                \bigg\lVert PV\int_{\abs{\alpha}>1} \frac{1}{\alpha}\partial_x^2 K(\,\cdot\,,\alpha)d\alpha\bigg\rVert_{L^\infty}&\leq c\, (1+\norm{g}_{C^{2,\delta}})^2\quad\mbox{for}\quad\delta\in(0,1).
                \end{split}
                \end{equation}
      \end{lemma}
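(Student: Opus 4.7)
The plan is to follow the strategy of Lemma \ref{aux_main_prop4}, but starting from the second-order decomposition that already appeared inside the proof of Lemma \ref{secondmain_lemma}: namely
\[
\partial_x^2 K(x,\alpha)=[\partial_x\Delta_\alpha h]^2\,B_1(x,\alpha)+\partial_x^2\Delta_\alpha g\,B_2(x,\alpha),
\]
with $|B_1(x,\alpha)|\leq 10\,K(x,\alpha)$ and $|B_2(x,\alpha)|\leq 2\,K(x,\alpha)\leq 2$. Inserting this into the integral reduces the estimate to bounding the two resulting pieces separately on the region $|\alpha|>1$.

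The piece involving $B_2$ is easy: since $|\partial_x^2\Delta_\alpha g|\leq 2\norm{\partial_x^2 g}_{L^\infty}/|\alpha|$ and $|B_2|$ is uniformly bounded, the integrand is $\leq c\,\norm{\partial_x^2 g}_{L^\infty}\alpha^{-2}$, which integrates to $c\,\norm{\partial_x^2 g}_{L^\infty}$ on $|\alpha|>1$. For the piece involving $B_1$, I would expand
\[
[\partial_x\Delta_\alpha h]^2=4+4\,\partial_x\Delta_\alpha g+(\partial_x\Delta_\alpha g)^2.
\]
The cross and quadratic terms carry factors $|\partial_x\Delta_\alpha g|^j\leq (2\norm{\partial_x g}_{L^\infty}/|\alpha|)^j$ for $j=1,2$, so combined with $1/|\alpha|$ and $|B_1|\leq c$ they produce $\alpha^{-2}$ and $\alpha^{-3}$ tails with bound $c(1+\norm{g}_{C^1})^2$.

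The genuine obstacle is the constant-coefficient term
\[
4\,PV\int_{|\alpha|>1}\frac{B_1(x,\alpha)}{\alpha}\,d\alpha,
\]
which lacks the free $1/|\alpha|$ supplied by $\partial_x\Delta_\alpha g$. I would treat it in the spirit of Lemmas \ref{aux_main_lemma1} and \ref{aux_main_prop3}, by adding and subtracting the analogous quantity $B_1^f(x,\alpha):=-2K_f^2+8(\Delta_\alpha f)^2K_f^3$ obtained by replacing $\Delta_\alpha h$ with $\Delta_\alpha f=2x-\alpha$. The integral of $B_1^f/\alpha$ over $|\alpha|>1$ is, after the substitution $u=2x-\alpha$, the truncated Hilbert transform of a bounded rational function of $u$, hence uniformly bounded by Lemma \ref{hilbert_truncated}. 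For the difference $B_1-B_1^f$, I would use the Lipschitz property of the maps $z\mapsto F(z)^n$ and the identity $\Delta_\alpha h-\Delta_\alpha f=\Delta_\alpha g$, which together with $|\Delta_\alpha g|\leq 2\norm{g}_{L^\infty}/|\alpha|$ on $|\alpha|>1$ produces an extra $1/|\alpha|$ factor; the resulting integrand is dominated by $c(1+\norm{g}_{L^\infty})(K(x,\alpha)+K_f(x,\alpha))/\alpha^2$, whose integral over $|\alpha|>1$ is bounded uniformly in $x$ (arguing as for $\int K\,d\alpha$ via the lower bound $|\Delta_\alpha h|\geq |2x-\alpha|-\norm{\partial_x g}_{L^\infty}$ that was already used in the proof of Lemma \ref{main_prop_9}).

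Collecting the bounds for all three pieces (the $B_2$ contribution, the two $\partial_x\Delta_\alpha g$ corrections, and the constant piece estimated through $B_1^f$ and the Lipschitz difference) yields the claimed estimate with a factor of the form $c(1+\norm{g}_{C^2})^2$; the paper writes $(1+\norm{g}_{C^{2,\delta}})^2$ simply as a uniform envelope, since the outside region $|\alpha|>1$ does not actually require any Hölder modulus of continuity of $\partial_x^2 g$. The only step that needs genuine care is the Lipschitz estimate for $B_1-B_1^f$, since both $B_1$ and $B_1^f$ contain the non-integrable factor $(\Delta_\alpha h)^2$ or $(\Delta_\alpha f)^2$; one must exploit the gain $(\Delta_\alpha h)^2K^3\leq K$ (analogously for $f$) to keep the expressions bounded pointwise before extracting the $|\Delta_\alpha g|$ factor.
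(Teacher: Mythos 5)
Your argument is correct and follows essentially the same route as the paper: the same splitting of $\partial_x^2K$ into the $\partial_x^2\Delta_\alpha g$ piece (killed by the $|\alpha|^{-2}$ decay) and the $(\partial_x\Delta_\alpha h)^2 F''(\Delta_\alpha h)$ piece, which the paper also treats by expanding $\partial_x\Delta_\alpha h=2+\partial_x\Delta_\alpha g$, comparing with the frozen parabola kernel through the Lipschitz property to gain a factor $|\Delta_\alpha g|\leq 2\norm{g}_{L^\infty}|\alpha|^{-1}$, and invoking \cref{hilbert_truncated} for the purely rational term. The only (harmless) deviation is that you use the $L^\infty$ bound on $\partial_x^2g$ where the paper uses the H\"older seminorm for the first piece; both yield the stated estimate since $\norm{g}_{C^2}\leq\norm{g}_{C^{2,\delta}}$.
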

 \begin{proof}
      Using the indentity (\ref{second_K}) we have 
         \begin{equation*}
            \begin{split}
                \partial_x^2K(x,\alpha)=&(\partial_x^2\Delta_\alpha g )B_1(x,\alpha )+(\partial_x\Delta_\alpha h)^2B_2(x,\alpha)
            \end{split}
        \end{equation*}
  where $B_1(x,\alpha)$ and $B_2(x,\alpha)$ are bounded terms. We decompose the integral in the next way 
        \begin{equation*}\setlength{\jot}{12pt}
          \begin{split}
              \int_{\abs{\alpha}>1}\frac{1}{\alpha}\partial_x^2K(x,\alpha)d\alpha&=\int_{\abs{\alpha}>1}\frac{1}{\alpha^2}\big(\partial_x^2g(x)-\partial_x^2g(x-\alpha)\big) B_1(x,\alpha)d\alpha\\
              &+\int_{\abs{\alpha}>1}\frac{1}{\alpha}(2+\partial_x\Delta_\alpha g)^2B_2(x,\alpha)d\alpha.
          \end{split}
        \end{equation*}
        For the first integral  in the right hand side, we note that $\abs{B_1(x,\alpha)}=\abs{F'(\Delta_\alpha h)}\leq 2$ and 
        \[\abs{\partial_x^2g(x)-\partial_x^2g(x-\alpha)}\leq \abs{\partial_x^2 g}_{C^\delta}\cdot\abs{\alpha}^\delta,\quad\mbox{
for}\quad \delta\in(0,1).\]
        Therefore 
        \[\bigg|\int_{\abs{\alpha}>1}\frac{1}{\alpha^2}\big(\partial_x^2g(x)-\partial_x^2g(x-\alpha)\big) B_1(x,\alpha)d\alpha\bigg|\leq c\,\norm{g}_{C^{2,\delta}}\int_{\abs{\alpha}>1}\abs{\alpha}^{2-\delta}d\alpha ,\]
        which is integrable. To get the bound for the second integral we observe 
      \[B_2(x,\alpha)=-2F(\Delta_\alpha h)^2+ 8(\Delta_{\alpha}h)^2F(\Delta_\alpha h)^3,\]        
  then we  proceed as in  \cref{aux_main_prop4} to obtain that 
      \[\bigg| \int_{\abs{\alpha}>1}\frac{1}{\alpha}(\partial_x\Delta_\alpha h )^2 B_2(x,\alpha)d\alpha \bigg|\leq c\,(1+\norm{\partial_xg}_{L^\infty})^2,\]
      and this completes the proof.
      \end{proof}
\begin{lemma}\label{bound_k3} Let $g\in H^s(\mathbb R)$ with $s\geq 3$  then
	\begin{equation}\label{bound_k3s}
		\bigg|PV\int_{\abs{\alpha}>1}\frac{1}{\alpha}K(x,\alpha)^3d\alpha\bigg|\leq c\,(1+\norm{g}_{L^\infty}),
	\end{equation}
\end{lemma}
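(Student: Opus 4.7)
My plan is to reduce the integral against the rapidly decaying rational kernel $F(\Delta_\alpha f)^3 = (1+(2x-\alpha)^2)^{-3}$ and control the remainder by the $L^\infty$ norm of $g$, following the same add–subtract strategy used in \cref{aux_main_lemma1} and \cref{aux_main_prop3}. Concretely, I would write
\[
PV\int_{\abs{\alpha}>1}\frac{K(x,\alpha)^3}{\alpha}\,d\alpha = PV\int_{\abs{\alpha}>1}\frac{F(\Delta_\alpha f)^3}{\alpha}\,d\alpha + \int_{\abs{\alpha}>1}\frac{K(x,\alpha)^3-F(\Delta_\alpha f)^3}{\alpha}\,d\alpha,
\]
so that the first piece is an explicit rational integral and the second carries the nonlinear information.

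For the first piece, note that $F(\Delta_\alpha f) = 1/(1+(2x-\alpha)^2)$, and the change of variables $u = 2x-\alpha$ turns the integral into $-\pi H_{\abs{y}>1}[(1+y^2)^{-3}](2x)$ (up to signs from the $PV$ convention), which is bounded uniformly in $x$ by \cref{hilbert_truncated} applied with $m=0$, $n=3$. This contributes an $O(1)$ term.

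For the second piece, I would factor the cubic difference
\[
K(x,\alpha)^3 - F(\Delta_\alpha f)^3 = \bigl(K(x,\alpha) - F(\Delta_\alpha f)\bigr)\bigl(K(x,\alpha)^2 + K(x,\alpha)F(\Delta_\alpha f) + F(\Delta_\alpha f)^2\bigr).
\]
Since $0 < K, F(\Delta_\alpha f) \leq 1$, the second factor is bounded by $3$. For the first factor, the Lipschitz property of $F$ together with the identity $\Delta_\alpha h - \Delta_\alpha f = \Delta_\alpha g$ yields
\[
\abs{K(x,\alpha)-F(\Delta_\alpha f)} \leq c\,\abs{\Delta_\alpha g} \leq \frac{2\,\norm{g}_{L^\infty}}{\abs{\alpha}},
\]
exactly the estimate (\ref{lip_f}) already used in the paper. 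Inserting these bounds gives
\[
\bigg|\int_{\abs{\alpha}>1}\frac{K^3-F(\Delta_\alpha f)^3}{\alpha}\,d\alpha\bigg| \leq c\,\norm{g}_{L^\infty}\int_{\abs{\alpha}>1}\frac{1}{\alpha^2}\,d\alpha \leq c\,\norm{g}_{L^\infty}.
\]

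Combining the two contributions yields $c\,(1+\norm{g}_{L^\infty})$, as claimed. No significant obstacle is expected: the only mildly delicate point is justifying the principal value for the rational piece uniformly in $x$, but this is precisely what \cref{hilbert_truncated} is designed to handle, so the proof amounts to organizing the decomposition and invoking the already-established Lipschitz-type bound on $F$.
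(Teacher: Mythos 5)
Your proposal is correct and follows essentially the same route as the paper: the paper also splits off the rational piece $F(\Delta_\alpha f)^3$ (handled by \cref{hilbert_truncated}) and writes $K^3-F(\Delta_\alpha f)^3$ as a telescoping sum of terms each containing the difference $F(\Delta_\alpha h)-F(\Delta_\alpha f)$, which is then controlled by the Lipschitz bound (\ref{lip_f}) and the decay $\abs{\alpha}^{-2}$. Your factored form $(a-b)(a^2+ab+b^2)$ is just the same decomposition written compactly, so no substantive difference.
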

\begin{proof}
Using $K(x,\alpha)=F(\Delta_{\alpha}h)$ and  adding a subtracting  $F(\Delta_{\alpha}f)$ we have the next decomposition
	\begin{equation*}
		\begin{split}
			K(x,\alpha)^3&=F(\Delta_{\alpha}h)^2\big[F(\Delta_\alpha f)-F(\Delta_{\alpha}f)\big]
                                 +F(\Delta_{\alpha}h)\big[F(\Delta_\alpha f)-F(\Delta_{\alpha}f)\big]F(\Delta_\alpha f )\\
              			&+\big[F(\Delta_\alpha f)-F(\Delta_{\alpha}f)\big]F(\Delta_\alpha f )^2+F(\Delta_\alpha f)^3:=\Xi(x,\alpha)+F(\Delta_\alpha f)^3
		\end{split}
	\end{equation*}
Using the Lipschitz condition (\ref{lip_f}) we see that $\abs{\Xi(x,\alpha)}/\alpha$ is integrable for $\abs{\alpha}>1.$ Finally using \cref{hilbert_truncated} we obtain that 
	\[\bigg|\int_{\abs{\alpha}>1}\frac{1}{\alpha}K(x,\alpha)^3d\alpha\bigg|\leq c\,\norm{g}_{L^\infty}+\bigg|\int_{\abs{\alpha}>1}\frac{1}{\alpha}F(\Delta_{\alpha}f)^3d\alpha\bigg|\leq c\,(1+\norm{g}_{L^\infty}),\]
and this completes the proof.
\end{proof}
In the next lemma we recall the definition (\ref{def_gamma})
	\begin{equation*}
		\gamma(x,\alpha)=24(\Delta_\alpha h)K(x,\alpha)^3-48(\Delta_\alpha h)^3 K(x,				\alpha)^4.
	\end{equation*}

\begin{lemma}\label{bounds_outside_kn} Let $g\in H^s(\mathbb R)$ with $s\geq 3,$  we have  the next bound
	\begin{equation}\label{bound_outside}
		\bigg|PV\int_{\abs{\alpha}>1}\frac{1}{\alpha}\gamma(x,\alpha)d\alpha\bigg|\leq c\,(1+\norm{g}_{L^\infty})^3.
	\end{equation}
\end{lemma}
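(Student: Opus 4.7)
The approach mirrors the treatment of $S_7$ already carried out in the proof of Lemma~\ref{secondmain_lemma}: split $\gamma(x,\alpha)$ into a purely ``parabolic'' piece, depending only on $\Delta_\alpha f=2x-\alpha$, whose contribution is handled by Lemma~\ref{hilbert_truncated}, and a remainder that gains a factor $1/|\alpha|$ from the presence of $\Delta_\alpha g$ or of a Lipschitz difference involving $g$. Concretely, I introduce
\[\gamma_f(x,\alpha):=24\,\Delta_\alpha f\, F(\Delta_\alpha f)^3-48\,(\Delta_\alpha f)^3 F(\Delta_\alpha f)^4.\]
Since $\Delta_\alpha f=2x-\alpha$, $\gamma_f(x,\alpha)$ is a rational function of $\alpha$ with denominator degree strictly greater than the numerator degree. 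Hence by Lemma~\ref{hilbert_truncated},
\[\bigg|PV\int_{\abs{\alpha}>1}\frac{1}{\alpha}\gamma_f(x,\alpha)\,d\alpha\bigg|\leq c.\]

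For the remainder $\gamma-\gamma_f$, I use the decomposition $\Delta_\alpha h=\Delta_\alpha f+\Delta_\alpha g$ together with the pointwise identity
\[K(x,\alpha)-F(\Delta_\alpha f)=-\Delta_\alpha g\,(\Delta_\alpha h+\Delta_\alpha f)\,K(x,\alpha)\,F(\Delta_\alpha f),\]
(the Lipschitz-type expansion of $F$ used throughout Section~\ref{KernelBounds}) and expand the two differences
$\Delta_\alpha h\,K^3-\Delta_\alpha f F(\Delta_\alpha f)^3$ and $(\Delta_\alpha h)^3K^4-(\Delta_\alpha f)^3F(\Delta_\alpha f)^3$ telescopically. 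Every resulting term carries either an explicit factor $\Delta_\alpha g$ or an explicit factor $K-F(\Delta_\alpha f)$; for $\abs{\alpha}>1$ both satisfy
\[\abs{\Delta_\alpha g}\leq \frac{2\norm{g}_{L^\infty}}{\abs{\alpha}},\qquad \abs{K-F(\Delta_\alpha f)}\leq \frac{c\,\norm{g}_{L^\infty}}{\abs{\alpha}}.\]

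The remaining factors are controlled by elementary bounds. From $|u|F(u)\le 1/2$ one has $\abs{\Delta_\alpha h}K\le 1/2$ and $\abs{\Delta_\alpha f}F(\Delta_\alpha f)\le 1/2$; the mixed products $\abs{\Delta_\alpha f}K$ and $\abs{\Delta_\alpha h}F(\Delta_\alpha f)$ are bounded by $1/2+2\norm{g}_{L^\infty}$ using $\abs{\Delta_\alpha h-\Delta_\alpha f}\le 2\norm{g}_{L^\infty}$ for $\abs{\alpha}>1$. Collecting the factors I will obtain
\[\bigl|\gamma(x,\alpha)-\gamma_f(x,\alpha)\bigr|\leq c\,(1+\norm{g}_{L^\infty})^{3}\,\frac{1}{\abs{\alpha}}\quad\text{for }\abs{\alpha}>1,\]
and therefore
\[\bigg|PV\int_{\abs{\alpha}>1}\frac{\gamma(x,\alpha)-\gamma_f(x,\alpha)}{\alpha}\,d\alpha\bigg|\le c\,(1+\norm{g}_{L^\infty})^{3}\int_{\abs{\alpha}>1}\frac{d\alpha}{\alpha^{2}}\leq c\,(1+\norm{g}_{L^\infty})^{3}.\]
Adding the bound on $\gamma_f$ yields the claim.

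The only real obstacle is bookkeeping: the telescoping difference produces many cross terms, and one must be careful that no term of the shape $(\Delta_\alpha f)^{k}K^{j}$ with $k>j$ is left unabsorbed, since $\Delta_\alpha f$ grows linearly in $\alpha$ while $K$ decays only quadratically in $\Delta_\alpha h$, not in $\Delta_\alpha f$. The comparison $\abs{\Delta_\alpha h-\Delta_\alpha f}\le 2\norm{g}_{L^\infty}$, usable because we are away from the origin $\abs{\alpha}>1$, is precisely what lets the quadratic decay of $K$ be transferred to bound $\abs{\Delta_\alpha f}K$ at the cost of an extra factor of $(1+\norm{g}_{L^\infty})$, accounting for the cubic power on the right-hand side of \eqref{bound_outside}.
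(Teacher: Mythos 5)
Your proposal is essentially the paper's own argument: expand $\Delta_\alpha h$ and $(\Delta_\alpha h)^3$ around the parabola, send the pure rational part $\gamma_f$ (depending only on $\Delta_\alpha f$) to \cref{hilbert_truncated}, and bound the remainder using $\abs{\Delta_\alpha g}\le 2\norm{g}_{L^\infty}/\abs{\alpha}$ together with the Lipschitz property of $F$, trading each pairing of $\Delta_\alpha f$ against $K$ for a factor $(1+\norm{g}_{L^\infty})$ — exactly the scheme of (\ref{gamma_decomposition})--(\ref{good_decomposition_outside}). The only caveat is bookkeeping of the exponent: with your stated tools the worst cross term, $(\Delta_\alpha f)^3\big[K-F(\Delta_\alpha f)\big]K^3$, comes out as $c\,(1+\norm{g}_{L^\infty})^3\norm{g}_{L^\infty}\abs{\alpha}^{-1}$, i.e.\ quartic rather than the claimed cubic prefactor; this is harmless (any polynomial bound suffices downstream, and the paper's own write-up glosses over the same point) and is fixed at once by observing that $u\mapsto u\,F(u)^3$ and $u\mapsto u^3F(u)^4$ are globally Lipschitz, so that $\abs{\gamma-\gamma_f}\le c\,\abs{\Delta_\alpha g}$, or by re-expanding $(\Delta_\alpha f)^k=(\Delta_\alpha h-\Delta_\alpha g)^k$ as the paper does for the mixed terms in (\ref{bounds_in_the_numerator}).
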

\begin{proof} Recall that $K(x,\alpha)=F(\Delta_{\alpha}h).$ Using the definition (\ref{def_gamma}) we expand $\Delta_{\alpha}h$ and $(\Delta_\alpha h)^3$ to obtain that 
	\begin{equation}\label{gamma_decomposition}
		\begin{split}
			\gamma(x,\alpha)
			&=24\Delta_{\alpha}f K(x,\alpha)^3+ 24\Delta_{\alpha}g K(x,\alpha)^3-48(\Delta_{\alpha}f)^3K(x,\alpha)^4\\
			&-48\cdot 3(\Delta_{\alpha}f)^2\Delta_{\alpha}g K(x,\alpha)^4-48\cdot 3(\Delta_{\alpha}f)(\Delta_{\alpha}g)^2 K(x,\alpha)^4\\
			&-48(\Delta_{\alpha}g)^3 K(x,\alpha)^4.
		\end{split}
	\end{equation}
The second and last terms in (\ref{gamma_decomposition}) are easily bounded by
\[\abs{24\Delta_{\alpha}g K(x,\alpha)^3-48\Delta_{\alpha}g K(x,\alpha)^4}\leq c\,\frac{\norm{g}_{L^\infty}}{\abs{\alpha}}+c\frac{\norm{g}_{L^\infty}^3}{\abs{\alpha}^3}.\]
For the fourth term, by adding and subtracting $\Delta_{\alpha}g,$  we obtain the next decomposition 
	\begin{equation}\label{bounds_in_the_numerator}
		\begin{split}
			(\Delta_{\alpha}f)^2\Delta_{\alpha}gK(x,\alpha)^4&=(\Delta_{\alpha}h)^2\Delta_{\alpha}gK(x,\alpha)^4-2\Delta_{\alpha}h						(\Delta_{\alpha}g)^2K(x,\alpha)^4+(\Delta_{\alpha}g)^3K(x,\alpha)^4.
		\end{split}		
	\end{equation}
Hence the fourth term in (\ref{gamma_decomposition}) is bounded by 
\[	\abs{(\Delta_{\alpha}f)^2\Delta_{\alpha}gK(x,\alpha)^4}\leq c\,\frac{\norm{g}_{L^\infty}}{\abs{\alpha}}+c\,\frac{\norm{g}_{L^\infty}^2}{\abs{\alpha}^2}+c\,\frac{\norm{g}_{L^\infty}^3}{\abs{\alpha}^3}.\]
In a similar way the fifth term  is bounded by
	\[\abs{\Delta_{\alpha}f(\Delta_{\alpha}g)^2K(x,\alpha)^4}\leq c\,\frac{\norm{g}_{L^\infty}^2}{\abs{\alpha}^2}+c\,\frac{\norm{g}_{L^\infty}^3}{\abs{\alpha}^3}.\]
For the first term  adding and subtracting $F(\Delta_{\alpha}f)$ we have the next decomposition
		\begin{equation}\label{good_decomposition_outside}\setlength{\jot}{12pt}
			\begin{split}
				\Delta_{\alpha} f K(x,\alpha)^3&=\Delta_{\alpha}f F(\Delta_\alpha h)^2\big[F(\Delta_\alpha h)-F(\Delta_\alpha f)\big]\\
&+\Delta_{\alpha}f F(\Delta_\alpha h)\big[F(\Delta_\alpha h)-F(\Delta_\alpha f)\big]F(\Delta_\alpha f)\\
					&+\Delta_{\alpha}f \big[F(\Delta_\alpha h)-F(\Delta_\alpha f)\big]F(\Delta_{\alpha}f)^2+\Delta_{\alpha}f F(\Delta_\alpha f)^3.
			\end{split}
		\end{equation}
Using the Lipschitz condition  (\ref{lip_f}) and estimates from \cref{hilbert_truncated} we obtain that
	\begin{equation*}\setlength{\jot}{12pt}
		\begin{split}
			\bigg|\int_{\abs{\alpha}}\frac{1}{\alpha}\Delta_{\alpha }fK(x,\alpha)^3d\alpha\bigg|&\leq c\,\norm{g}_{L^\infty}+\bigg|\int_{\abs{\alpha}>1}\frac{1}{\alpha}\Delta_{\alpha}f F(\Delta_{\alpha}f)^3d\alpha\bigg|\\
			&\leq c\,(1+\norm{g}_{L^\infty}).
		\end{split}
	\end{equation*} 
Similarly we find that 
		\begin{equation*}
		\begin{split}
			\bigg|\int_{\abs{\alpha}}\frac{1}{\alpha}(\Delta_{\alpha }f)^3K(x,\alpha)^4d\alpha\bigg|
			&\leq c\,(1+\norm{g}_{L^\infty}).
		\end{split}
	\end{equation*} 
We conclude the proof by  using the decay at infinity for the remaining  terms.
\end{proof}
\begin{lemma}\label{bounds_outside_b4} Let $g\in H^s(\mathbb R)$ with $s\geq 3,$  we have  the next bound
	\begin{equation}\label{bound_outside_b4}
		\bigg|PV\int_{\abs{\alpha}>1}\frac{1}{\alpha}B_4(x,\alpha)d\alpha\bigg|\leq c\,(1+\norm{g}_{C^1})^2.
	\end{equation}
\end{lemma}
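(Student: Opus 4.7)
The plan is to reduce the integral to quantities already controlled earlier in the section. I would start by decomposing $\partial_x\Delta_\alpha h=2+\partial_x\Delta_\alpha g$, which splits
\[B_4(x,\alpha)=6\bigl[-2K(x,\alpha)^3+8(\Delta_\alpha h)^2 K(x,\alpha)^4\bigr]+3\bigl[-2K(x,\alpha)^3+8(\Delta_\alpha h)^2 K(x,\alpha)^4\bigr]\partial_x\Delta_\alpha g.\]
The key observation is the algebraic identity $K(x,\alpha)\bigl(1+(\Delta_\alpha h)^2\bigr)=1$, which gives $(\Delta_\alpha h)^2 K=1-K$; multiplying by $K^3$ yields
\[-2K^3+8(\Delta_\alpha h)^2K^4=6K^3-8K^4.\]
This simplification removes every explicit $\Delta_\alpha h$ factor from the numerator and reduces the problem to controlling $\int_{|\alpha|>1}\alpha^{-1}K^j\,d\alpha$ for $j=3,4$.

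For the piece involving $\partial_x\Delta_\alpha g$, I would simply use $|K|\leq 1$ together with the Fundamental Theorem bound $|\partial_x\Delta_\alpha g|\leq 2\norm{\partial_x g}_{L^\infty}/|\alpha|$ to bound the integrand by $c\,\norm{\partial_x g}_{L^\infty}/\alpha^2$, which is integrable on $|\alpha|>1$ and contributes at most $c\,\norm{\partial_x g}_{L^\infty}$.

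For the leading piece $\int_{|\alpha|>1}(6K^3-8K^4)/\alpha\,d\alpha$, the $K^3$ contribution is exactly the quantity bounded in \cref{bound_k3}, giving a contribution $\leq c(1+\norm{g}_{L^\infty})$. The $K^4$ contribution I would handle by the same telescoping technique used in the proof of \cref{bound_k3}: write $K^4=F(\Delta_\alpha h)^4$ and expand
\[F(\Delta_\alpha h)^4-F(\Delta_\alpha f)^4=\bigl[F(\Delta_\alpha h)-F(\Delta_\alpha f)\bigr]\,\Pi(x,\alpha),\]
where $\Pi$ is a bounded polynomial in $F(\Delta_\alpha h)$ and $F(\Delta_\alpha f)$. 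The Lipschitz bound $|F(\Delta_\alpha h)-F(\Delta_\alpha f)|\leq 2\norm{g}_{L^\infty}/|\alpha|$ makes the difference contribute an integrand of size $O(\norm{g}_{L^\infty}/\alpha^2)$, while $\int_{|\alpha|>1}F(\Delta_\alpha f)^4/\alpha\,d\alpha$ becomes, after the change of variable $u=2x-\alpha$, a truncated Hilbert transform of $1/(1+u^2)^4$ evaluated at $2x$, which is bounded by \cref{hilbert_truncated} (with $m=0<2n=8$).

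Collecting the three pieces gives $|PV\int_{|\alpha|>1}\alpha^{-1}B_4(x,\alpha)\,d\alpha|\leq c(1+\norm{g}_{C^1})$, which in particular implies the claimed $c(1+\norm{g}_{C^1})^2$ bound. The only non-routine step is spotting the algebraic identity $(\Delta_\alpha h)^2K^4=K^3-K^4$; without it one would have to track bounded-but-not-obviously-bounded products such as $(\Delta_\alpha f)^2 F(\Delta_\alpha h)^k$, which is precisely where the proof of \cref{bounds_outside_kn} spends most of its effort. With the identity in hand, the lemma is essentially a corollary of \cref{bound_k3} and \cref{hilbert_truncated}.
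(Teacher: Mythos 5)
Your argument is correct, and it reaches the bound by a slightly different decomposition than the paper. The paper expands both $\partial_x\Delta_\alpha h=2+\partial_x\Delta_\alpha g$ and $(\Delta_\alpha h)^2=(\Delta_\alpha f+\Delta_\alpha g)^2$, collects every $g$-dependent piece into a remainder $\Psi$ of size $c\,(\norm{g}_{C^1}+\norm{g}_{C^1}^2)\abs{\alpha}^{-1}$, and is then left with the main terms $-12K(x,\alpha)^3$ and $48(\Delta_\alpha f)^2K(x,\alpha)^4$; the first is \cref{bound_k3} and the second is treated by the telescoping argument of \cref{bounds_outside_kn}, which requires absorbing the $(\Delta_\alpha f)^2$ weight into powers of $F(\Delta_\alpha f)$. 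You instead exploit the identity $(\Delta_\alpha h)^2K(x,\alpha)=1-K(x,\alpha)$, so the bracket in $B_4$ collapses to $6K^3-8K^4$ and no explicit $\Delta_\alpha h$ (hence no $\Delta_\alpha f$) factor survives: the cubic term is exactly \cref{bound_k3}, the quartic term is handled by the same telescoping with the Lipschitz estimate (\ref{lip_f}) and \cref{hilbert_truncated}, and the piece carrying $\partial_x\Delta_\alpha g$ decays like $\norm{\partial_xg}_{L^\infty}\abs{\alpha}^{-2}$, just as the paper's $\Psi$ does. So the toolkit is identical, but your algebraic identity removes the bookkeeping of $(\Delta_\alpha f)^2$-weighted products and even yields the slightly sharper bound $c\,(1+\norm{g}_{C^1})$, which of course implies the stated one. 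One cosmetic point: \cref{hilbert_truncated} is stated for $m,n\in\mathbb N_+$, while you invoke it with $m=0$; that case is immediate (and the paper itself already uses it this way in \cref{aux_main_lemma1}), so this is not a gap, but it deserves a word if you write the argument out.
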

\begin{proof}
Using the definition (\ref{terms_bs})
\[B_4(x,\alpha)=3\big[-2K(x,\alpha)^3+8(\Delta_{\alpha}h)^2K(x,\alpha)^4\big]					\partial_x\Delta_{\alpha}h.\]
We expand the terms $\partial_x\Delta_{\alpha}h$ and $(\Delta_{\alpha}h)^2$ in $B_4(x,\alpha)$ to obtain the following decomposition
\[B_4(x,\alpha)=\Psi(x,\alpha)-12K(x,\alpha)^3+48(\Delta_{\alpha}f)^2K(x,\alpha)^4\]
where  
\[\Psi(x,\alpha):=96\Delta_\alpha f\Delta_\alpha g K(x,\alpha)^4-6\partial_x\Delta_\alpha g K(x,\alpha)^3+24\partial_x\Delta_\alpha g(\Delta_\alpha h)^2 K(x,\alpha)^4.\]
We note that 
\[\abs{\Psi(x,\alpha)}\leq c\,\big(\norm{g}_{C^1}+\norm{g}_{C^1}^2\big)\,\abs{\alpha}^{-1},\]
hence $\abs{\Psi(x,\alpha)}/\alpha$ is integrable for $\abs{\alpha}>1.$ For the  remaining terms in the decomposition,  we follow the proofs of \cref{bound_k3} and \cref{bounds_outside_kn}.
\end{proof}
\begin{lemma}\label{bounds_outside_gamma_n} Let $g\in H^s(\mathbb R)$ with $s\geq 3,$  then
	\begin{equation}\label{bound_outside_gamma}
		\bigg|PV\int_{\abs{\alpha}>1}\frac{1}{\alpha}\Gamma(x,\alpha)d\alpha\bigg|\leq c\,(1+\norm{g}_{L^\infty}).
	\end{equation}
\end{lemma}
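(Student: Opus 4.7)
The strategy mirrors the proof of \cref{bounds_outside_kn}, since $\Gamma(x,\alpha)$ shares the same algebraic structure as $\gamma(x,\alpha)$: a polynomial in $\Delta_\alpha f$, $\Delta_\alpha h$, and $K(x,\alpha)$ divided by powers of $1+(\Delta_\alpha f)^2$. The plan is to isolate a pure-$f$ piece whose integral against $1/\alpha$ reduces, after the change of variable $y=2x-\alpha$, to a truncated Hilbert transform of a rational function (bounded by \cref{hilbert_truncated}), while every remaining contribution carries a factor that decays as $\abs{\alpha}^{-1}$ uniformly over $\abs{\alpha}>1$ and is therefore trivially integrable.

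First I would write $\Delta_\alpha f+\Delta_\alpha h=2\Delta_\alpha f+\Delta_\alpha g$ and split
\[\Gamma(x,\alpha)=-2(\Delta_\alpha f)^2\,Q(x,\alpha)-\Delta_\alpha f\,\Delta_\alpha g\,Q(x,\alpha),\qquad Q(x,\alpha):=\sum_{j=1}^{3}\frac{K(x,\alpha)^j}{(1+(\Delta_\alpha f)^2)^{4-j}}.\]
Since $0\leq K\leq 1$ and $\abs{\Delta_\alpha f}/(1+(\Delta_\alpha f)^2)^{1/2}\leq 1$, one has $\abs{Q(x,\alpha)}\leq 3$ and $\abs{\Delta_\alpha f\,Q(x,\alpha)}\leq c$. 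Using $\abs{\Delta_\alpha g}\leq 2\norm{g}_{L^\infty}\abs{\alpha}^{-1}$ for $\abs{\alpha}>1$, the second summand contributes
\[\bigg|\int_{\abs{\alpha}>1}\frac{1}{\alpha}\Delta_\alpha f\,\Delta_\alpha g\,Q(x,\alpha)\,d\alpha\bigg|\leq c\,\norm{g}_{L^\infty}\int_{\abs{\alpha}>1}\frac{d\alpha}{\alpha^2}\leq c\,\norm{g}_{L^\infty}.\]

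For the remaining $(\Delta_\alpha f)^2Q$ piece, I would telescope each power $K(x,\alpha)^j=F(\Delta_\alpha h)^j$ against $F(\Delta_\alpha f)^j$ exactly as in (\ref{good_decomposition_outside}), so that every remainder carries an explicit factor $F(\Delta_\alpha h)-F(\Delta_\alpha f)$. The Lipschitz bound (\ref{lip_f}) gives $\abs{F(\Delta_\alpha h)-F(\Delta_\alpha f)}\leq 2\norm{g}_{L^\infty}\abs{\alpha}^{-1}$, and combined with the uniform bound on $(\Delta_\alpha f)^2/(1+(\Delta_\alpha f)^2)^{4-j}$ these remainders yield again $c\,\norm{g}_{L^\infty}\int_{\abs{\alpha}>1}\abs{\alpha}^{-2}d\alpha$. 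What is left is a finite sum of purely $f$-dependent integrals of the shape
\[PV\int_{\abs{\alpha}>1}\frac{1}{\alpha}\cdot\frac{(2x-\alpha)^2}{(1+(2x-\alpha)^2)^{N}}\,d\alpha,\qquad N\geq 4,\]
which, under the substitution $y=2x-\alpha$, become truncated Hilbert transforms at the point $2x$ of the rational functions $r(y)=y^2/(1+y^2)^N$. Because the numerator degree $m=2$ satisfies $m<2N$, \cref{hilbert_truncated} provides a uniform bound, completing the estimate.

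The main obstacle is not analytic but combinatorial: $Q$ is a sum of three terms and the telescoping of $K^j$ against $F(\Delta_\alpha f)^j$ produces $j$ remainder terms, so roughly a dozen subterms must be catalogued and matched either to the $\abs{\alpha}^{-2}$-decay estimate or to \cref{hilbert_truncated}. Once the decomposition is organized along the $(\Delta_\alpha f)^2$ versus $\Delta_\alpha f\,\Delta_\alpha g$ split and $K^j$ is telescoped against $F(\Delta_\alpha f)^j$, no input beyond the Lipschitz bound for $F$, the pointwise bound on $\Delta_\alpha g$, and \cref{hilbert_truncated} is required.
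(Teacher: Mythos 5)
Your proposal is correct and follows essentially the same route as the paper: the split $\Delta_\alpha f+\Delta_\alpha h=2\Delta_\alpha f+\Delta_\alpha g$ giving the $\Gamma_1$/$\Gamma_2$ decomposition, the $\abs{\Delta_\alpha g}\leq 2\norm{g}_{L^\infty}\abs{\alpha}^{-1}$ decay for the $\Delta_\alpha g$ part, the telescoping of $K^j=F(\Delta_\alpha h)^j$ against $F(\Delta_\alpha f)^j$ controlled by the Lipschitz bound (\ref{lip_f}), and the reduction of the pure-$f$ remainder $(\Delta_\alpha f)^2F(\Delta_\alpha f)^4$ to \cref{hilbert_truncated} evaluated at $2x$. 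The only cosmetic difference is that you organize the bracket as a single sum $Q$ over $j=1,2,3$ while the paper treats the first bracket term explicitly and bounds the others "similarly"; the substance is identical.
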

\begin{proof}Using  the identity (\ref{Gamma_dfn}), we decompose the integral in two terms
		\begin{equation}\label{Gamma_decomposition}\int_{\abs{\alpha}>1}\frac{1}{\alpha}\Gamma(x,\alpha)d\alpha=\int_{\abs{\alpha}>1}\frac{1}{\alpha}\Gamma_1(x,					\alpha)d\alpha+\int_{\abs{\alpha}>1}\frac{1}{\alpha}\Gamma_2(x,\alpha)d\alpha,\end{equation}
for 
	\begin{equation*}
		\begin{split}
				\Gamma_1(x,\alpha)&:=-2(\Delta_{\alpha}f)^2\big[F(\Delta_{\alpha}h)^3F(\Delta_{\alpha}f)+F(\Delta_{\alpha}h)^2F(\Delta_{\alpha}f)^2+F(\Delta_{\alpha}h)F(\Delta_{\alpha}f)^3\big],\\
				\Gamma_2(x,\alpha)&:=-\Delta_{\alpha}g\Delta_{\alpha}f\big[F(\Delta_{\alpha}h)^3F(\Delta_{\alpha}f)+F(\Delta_{\alpha}h)^2F(\Delta_{\alpha}f)^2+F(\Delta_{\alpha}h)F(\Delta_{\alpha}f)^3\big].	
		\end{split}
	\end{equation*}
Notice 
	\[\abs{\Gamma_2(x,\alpha)}\leq 2\norm{g}_{L^\infty}\abs{\alpha}^{-1},\]
then the second  integral in (\ref{Gamma_decomposition}) is bounded. While for the first one, we proceed in a similar way to (\ref{good_decomposition_outside}) by adding and  subtracting  $F(\Delta_{\alpha}f).$ Then we have
	\begin{equation*}
		\begin{split}		
			(\Delta_\alpha f)^2 F(\Delta_{\alpha }h)^3 F(\Delta_{\alpha}f)&=(\Delta_\alpha f)^2F(\Delta_\alpha f)^2\big[F(\Delta_\alpha                                                        h)-F(\Delta_\alpha f)\big]F(\Delta_\alpha f)\\
			&+(\Delta_\alpha f)^2F(\Delta_\alpha f)\big[F(\Delta_\alpha h)-F(\Delta_\alpha f)\big]F(\Delta_\alpha f)^2\\
			&+(\Delta_\alpha f)^2\big[F(\Delta_\alpha                                                        h)-F(\Delta_\alpha f)\big]F(\Delta_\alpha f)^3+	(\Delta_\alpha f)^2F(\Delta_\alpha f)^4.
		\end{split}
	\end{equation*}
Using the estimate (\ref{lip_f}) and \cref{hilbert_truncated} we obtain that 
		\[\bigg|\int_{\abs{\alpha}>1}\frac{1}{\alpha}(\Delta_{\alpha}f)^2 F(\Delta_{\alpha}h)^3F(\Delta_\alpha f)d\alpha\bigg|\leq c\,\norm{g}_{L^\infty}+\bigg|\int_{\abs{\alpha}>1}\frac{1}{\alpha}(\Delta_{\alpha}f)^2F(\Delta_\alpha f)^4d\alpha\bigg|\leq c\,(1+\norm{g}_{L^\infty}).\]
The remaining terms in $\Gamma_1$ are bounded similarly and  this finishes the proof.
\end{proof}
\begin{lemma}\label{bounds_outside_teta_n} Let $g\in H^s(\mathbb R)$ with $s\geq 3,$  then
	\begin{equation}\label{bound_outside_teta}
		\bigg|PV\int_{\abs{\alpha}>1}\frac{1}{\alpha}\Theta(x,\alpha)d\alpha\bigg|\leq c\,(1+\norm{g}_{L^\infty})^2.
	\end{equation}
\end{lemma}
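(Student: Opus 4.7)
The plan is to follow exactly the template of \cref{bounds_outside_kn} and especially \cref{bounds_outside_gamma_n}, decomposing $\Theta$ algebraically so that each piece is either absolutely integrable against $1/\alpha$ on $\{|\alpha|>1\}$ (with polynomial control in $\|g\|_{L^\infty}$), or is a rational function of $\Delta_\alpha f = 2x-\alpha$ whose truncated Hilbert transform is handled by \cref{hilbert_truncated}.

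First, I would use the identity $\Delta_\alpha f + \Delta_\alpha h = 2\Delta_\alpha f + \Delta_\alpha g$ to split $\Theta = \Theta_1 + \Theta_2$, where
\[\Theta_1 = -2(\Delta_\alpha f)^4\,\mathcal S(x,\alpha),\qquad \Theta_2 = -(\Delta_\alpha f)^3\,\Delta_\alpha g\,\mathcal S(x,\alpha),\]
with $\mathcal S(x,\alpha):=\sum_{k=1}^{4}K(x,\alpha)^k F(\Delta_\alpha f)^{5-k}$. For $\Theta_2$, I would use $|\Delta_\alpha g|\leq 2\|g\|_{L^\infty}|\alpha|^{-1}$ for $|\alpha|>1$ together with the observation that $|(\Delta_\alpha f)^3 K^k F^{5-k}|$ is uniformly bounded (since for $k\leq 3$ the factor $F^{5-k}$ carries enough decay, and for $k=4$ one performs one additional replacement $K=F(\Delta_\alpha f)+[K-F(\Delta_\alpha f)]$, exactly as in (\ref{good_decomposition_outside})). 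This yields integrand bounded by $c(1+\|g\|_{L^\infty})/|\alpha|^2$, which is integrable with the prefactor $1/\alpha$.

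For the main piece $\Theta_1$, I would apply the step-by-step add-and-subtract of $F(\Delta_\alpha f)$ in place of each occurrence of $K(x,\alpha)=F(\Delta_\alpha h)$ used in the decomposition (\ref{good_decomposition_outside}). Each such replacement costs a factor $|K-F(\Delta_\alpha f)|\leq 2\|g\|_{L^\infty}|\alpha|^{-1}$ coming from the Lipschitz estimate (\ref{lip_f}). After summing over $k=1,\ldots,4$, the errors are controlled by $c(1+\|g\|_{L^\infty})^2|\alpha|^{-2}$ (the key algebraic fact being that each error carries at least one $F(\Delta_\alpha f)$ factor, so $|(\Delta_\alpha f)^4 F^{j}|\leq c$ for the relevant $j$). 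The surviving purely-$f$ piece is a constant multiple of $(\Delta_\alpha f)^4 F(\Delta_\alpha f)^5$, a rational function of $2x-\alpha$ with $m=4 < 10=2n$; applying the change of variable $\beta=2x-\alpha$ reduces $PV\int_{|\alpha|>1}\frac{(\Delta_\alpha f)^4 F(\Delta_\alpha f)^5}{\alpha}d\alpha$ to a truncated Hilbert transform of $\beta^4/(1+\beta^2)^5$, which is uniformly bounded by \cref{hilbert_truncated}.

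The main obstacle is bookkeeping the worst error term, arising from $k=4$ in $\mathcal S$: the algebraic estimate $|(\Delta_\alpha f)^4 F(\Delta_\alpha f)^{5-k}|$ degenerates to $|(\Delta_\alpha f)^4 F(\Delta_\alpha f)|$, which is not uniformly bounded in $x$. I would handle this by either (i) peeling off one extra copy of $(K-F(\Delta_\alpha f))$ so that the bad algebraic factor is multiplied by an additional $\|g\|_{L^\infty}|\alpha|^{-1}$, or (ii) splitting $\{|\alpha|>1\}$ according to whether $|\Delta_\alpha f|>4\|g\|_{L^\infty}$, where $K\leq 4 F(\Delta_\alpha f)$ restores the useful decay, or $|\Delta_\alpha f|\leq 4\|g\|_{L^\infty}$, a bounded range where direct estimation yields a bound of order $(1+\|g\|_{L^\infty})^2$. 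Collecting all pieces produces the required estimate $c(1+\|g\|_{L^\infty})^2$.
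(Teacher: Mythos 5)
Your proposal is correct and follows essentially the same route as the paper: the same splitting of $\Theta$ into $\Theta_1=-2(\Delta_\alpha f)^4\mathcal S$ and $\Theta_2=-(\Delta_\alpha f)^3\Delta_\alpha g\,\mathcal S$ via $\Delta_\alpha f+\Delta_\alpha h=2\Delta_\alpha f+\Delta_\alpha g$, the same telescoping replacement of $K(x,\alpha)$ by $F(\Delta_\alpha f)$ with the Lipschitz bound (\ref{lip_f}) controlling each error by $\norm{g}_{L^\infty}\abs{\alpha}^{-1}$, and \cref{hilbert_truncated} applied to the surviving rational piece $(\Delta_\alpha f)^4F(\Delta_\alpha f)^5$, exactly as in the treatment of $\Gamma_1$ in \cref{bounds_outside_gamma_n}. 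Your explicit handling of the $k=4$ degeneracy (peeling an extra $K-F(\Delta_\alpha f)$, or splitting according to $\abs{\Delta_\alpha f}\gtrless 4\norm{g}_{L^\infty}$ so that $K\leq 4F(\Delta_\alpha f)$) is a point the paper leaves implicit in its constants, and any resulting difference in the exponent of $(1+\norm{g}_{L^\infty})$ is harmless for the way the lemma is used in the energy estimate.
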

\begin{proof}
Using the identity (\ref{Teta_dfn}) we decompose in the next way 
\begin{equation}\label{Theta_decomposition}\int_{\abs{\alpha}>1}\frac{1}{\alpha}\Theta(x,\alpha)d\alpha:=\int_{\abs{\alpha}>1}\frac{1}{\alpha}\Theta_1(x,					\alpha)d\alpha+\int_{\abs{\alpha}>1}\frac{1}{\alpha}\Theta_2(x,\alpha)d\alpha,\end{equation}
for 
	\begin{equation*}\setlength{\jot}{10pt}
		\begin{split}
				\Theta_1(x,\alpha)&:=-2(\Delta_{\alpha}f)^4\big[F(\Delta_{\alpha}h)^3F(\Delta_{\alpha}f)+F(\Delta_{\alpha}h)^3F(\Delta_{\alpha}f)^2\\
&\hspace{4cm}+F(\Delta_{\alpha}h)^2F(\Delta_{\alpha}f)^3+ F(\Delta_{\alpha}h)F(\Delta_{\alpha}f)^4\big],\\
				\Theta_2(x,\alpha)&:=-\Delta_{\alpha}g(\Delta_{\alpha}f)^3\big[F(\Delta_{\alpha}h)^3F(\Delta_{\alpha}f)+F(\Delta_{\alpha}h)^3F(\Delta_{\alpha}f)^2\\
&\hspace{4cm}+F(\Delta_{\alpha}h)^2F(\Delta_{\alpha}f)^3+ F(\Delta_{\alpha}h)F(\Delta_{\alpha}f)^4\big].\\
		\end{split}
	\end{equation*}
Notice 
	\[\abs{\Theta_2(x,\alpha)}\leq c\,(1+\norm{\partial_xg}_{L^\infty})\norm{g}_{L^\infty}\abs{\alpha}^{-1},\]
then the second  integral in (\ref{Theta_decomposition}) is bounded. While for $\Theta_1$ we proceed in a similar way to $\Gamma_1$ in the previous lemma. By  adding and  subtracting  $F(\Delta_{\alpha}f),$ we find that 
\[\Theta_1(x,\alpha)=-2(\Delta_\alpha f)^4F(\Delta_\alpha h)^4\big[F(\Delta_\alpha h)-F(\Delta_\alpha f)\big]F(\Delta_\alpha f)+\tilde \Theta(x,\alpha)+c\,(\Delta_{\alpha}f)^4F(\Delta_\alpha f)^5,\]
where 
\[\abs{\tilde \Theta(x,\alpha)}\leq c\norm{g}_{L^\infty}\abs{\alpha}^{-1}.\]
We compute directly
	\[F(\Delta_{\alpha}h)-F(\Delta_\alpha f)=-\Delta_{\alpha}g(2\Delta_\alpha f+\Delta_{\alpha}g)F(\Delta_{\alpha}h)F(\Delta_{\alpha }f).\]
Then expanding the sum we obtain that
	\begin{equation*}
		\begin{split}
			\Big|-2(\Delta_\alpha f)^4F(\Delta_\alpha h)^4\big[F(\Delta_\alpha h)-F(\Delta_\alpha f)\big]F(\Delta_\alpha f)\Big|&\leq \big|2(\Delta_\alpha f)^5\Delta_{\alpha}g F(\Delta_\alpha h)^3 F(\Delta_\alpha f)^2\big|\\
&\hspace{-4cm}+\big|2(\Delta_\alpha f)^4(\Delta_{\alpha}g)^2 F(\Delta_\alpha h)^3 F(\Delta_\alpha f)^2\big|\\
&\hspace{-4cm}\leq c\,\norm{g}_{L^\infty}\abs{\alpha}^{-1}+c\,\norm{g}_{L^\infty}^2\abs{\alpha}^{-2}
		\end{split}
	\end{equation*}
and therefore 
\[\bigg|\int_{\abs{\alpha}>1}\frac{1}{\alpha}\Theta_1(x,\alpha)d\alpha\bigg|\leq c\,(1+\norm{g}_{L^\infty})^2,\]
which completes the proof.
\end{proof}
\section{Regularization}\label{regularization}
In this section we regularize the equation (\ref{equation_g}), via mollifiers.  We consider a function $\chi\in C_c^\infty(\mathbb R)$ that satisfies 
\[\int_{\mathbb R}\chi(x) \,dx=1,\quad \chi(\abs{x})=\chi(x)\quad\mbox{and}\quad \chi\geq 0.\] 
For every $\epsilon>0$ we define $\chi_\epsilon(x)=\epsilon^{-1}\chi(\epsilon^{-1}x).$  We denote  the convolution by 
    \[\chi_\epsilon g(x):=(\chi_\epsilon\ast g)(x)=\int_{\mathbb R}\chi_\epsilon(x-y)g(y)dy.\] 
Throughout the section  we use the next properties of mollifiers
	\begin{equation}\label{properties_mol}
		\begin{split}
			&\norm{\chi_\epsilon \partial_x^kg}_{L^\infty},\,\norm{\chi_\epsilon\partial_x^k g}_{L^2}\leq c(\epsilon)\norm{g}_{L^2},\\
			& \partial_x^s \chi_\epsilon g=\chi_\epsilon \partial_x^sg,\\
			& \norm{\chi_\epsilon g-g}_{H^{s-1}}\leq c\,\epsilon\norm{g}_{H^s}.
			\end{split}
	\end{equation}
\noindent Now  we define the regularized system as follows
    \begin{equation}\label{regularized_g}
        \begin{split}
        &M^\epsilon(g^\epsilon):=\chi_\epsilon\int_{\mathbb R}\partial_x\Delta_\alpha(\chi_\epsilon g^\epsilon)(x)\,K^\epsilon(x,\alpha)d\alpha\hspace{0cm}+\chi_\epsilon\int_{\mathbb R}\Delta_\alpha(\chi_\epsilon g^\epsilon)(x)\,G^\epsilon(x,\alpha)d \alpha,\\
        &g^\epsilon(x,0)=g_0(x),
        \end{split}
    \end{equation}
   where the regularized kernels are defined by
    \begin{equation}\label{defn_kers_reg}\setlength{\jot}{12pt}
        \begin{split}
        &K^\epsilon(x,\alpha):=\frac{1}{1+(\Delta_\alpha (\chi_\epsilon g^\epsilon)+\Delta_\alpha f)^2},\\
        &G^\epsilon(x,\alpha):= -\frac{2\Delta_\alpha f+\Delta_\alpha( \chi_\epsilon g^\epsilon)}{(1+(\Delta_\alpha (\chi_\epsilon g^\epsilon)+\Delta_{\alpha}f)^2)(1+(\Delta_\alpha f)^2)}.\\
      \end{split}
    \end{equation}
In the next lemma we apply the Picard theorem to the regularized system (\ref{regularized_g}), where we consider the open set  $\mathcal O\subset H^s(\mathbb R)$ defined by $\mathcal O=\{g\in H^s(\mathbb R):\norm{g}_{H^s}<c\}$ for $s\geq 3.$
    \begin{lemma}\label{lemma_picard_reg} 
        Let $\epsilon>0,$ then there exists  a  time $T_\epsilon>0$ and a  solution $g^\epsilon(x,t)\in C^1([ 0, T_\epsilon]:\mathcal O)$ to the regularized system  (\ref{regularized_g})  such that $g^\epsilon(x,0)=g_0(x)$ for $s\geq 3.$
    \end{lemma}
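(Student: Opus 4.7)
The plan is to apply the Picard--Lindel\"of theorem for autonomous ODEs on the Banach space $H^s(\mathbb R)$ to the system (\ref{regularized_g}). Since $\Delta_\alpha f = 2x - \alpha$ does not depend on $t$, the right-hand side $M^\epsilon$ is time-independent, and it suffices to show that $M^\epsilon \colon \mathcal O \to H^s(\mathbb R)$ is well-defined and locally Lipschitz, where $\mathcal O$ is the open ball in $H^s(\mathbb R)$ from the statement.

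For well-definedness, the decisive ingredient is the outer mollifier $\chi_\epsilon$ in the definition of $M^\epsilon$. By the properties (\ref{properties_mol}), we have $\partial_x^s(\chi_\epsilon F) = \chi_\epsilon \partial_x^s F$ together with $\norm{\chi_\epsilon \partial_x^k F}_{L^2} \leq c(\epsilon)\norm{F}_{L^2}$ for every integer $k$, so the $H^s$-norm of $M^\epsilon(g^\epsilon)$ is controlled by $c(\epsilon)$ times the $L^2_x$-norm of the two inner integrals in $\alpha$. For those integrals I would combine the bounds $\norm{\Delta_\alpha(\chi_\epsilon g^\epsilon)}_{L^\infty}, \norm{\partial_x \Delta_\alpha(\chi_\epsilon g^\epsilon)}_{L^\infty} \leq c(\epsilon)\norm{g^\epsilon}_{H^s}$ (which again follow from mollifier smoothing) with the observation that $K^\epsilon$ and $G^\epsilon$ satisfy the same pointwise and $L^2_x$ estimates as $K$ and $G$ in Section \ref{KernelBounds}, since they share the identical structure with $g$ replaced by $\chi_\epsilon g^\epsilon$. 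In particular the quadratic growth in $x$ of $\Delta_\alpha f$ produces the decay $K^\epsilon(x,\alpha) \lesssim \alpha^{-2}$ as $\abs{\alpha} \to \infty$ required to make the integrals convergent and yield the needed $L^\infty_x$ and $L^2_x$ bounds.

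For the local Lipschitz property, given $g_1, g_2 \in \mathcal O$ I would split $M^\epsilon(g_1) - M^\epsilon(g_2)$ into pieces of two types: those where the slope $\Delta_\alpha(\chi_\epsilon(g_1 - g_2))$ appears explicitly, and those where only the kernels differ through $K^\epsilon_1 - K^\epsilon_2$ or $G^\epsilon_1 - G^\epsilon_2$. In the first case the mollifier estimates give $\norm{\partial_x \Delta_\alpha \chi_\epsilon(g_1 - g_2)}_{L^\infty} \leq c(\epsilon)\norm{g_1 - g_2}_{L^2}$. In the second, the mean value theorem applied to the smooth function $z \mapsto (1+z^2)^{-1}$, together with the pointwise bound $\abs{\Delta_\alpha \chi_\epsilon(g_1 - g_2)} \leq c(\epsilon)\norm{g_1 - g_2}_{L^2}$, produces a factor of $\norm{g_1 - g_2}_{L^2}$ multiplied by a kernel with the same integrability in $\alpha$ as $K^\epsilon$. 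Summing, applying the outer mollifier once more and using that $\norm{g_i}_{H^s}$ is uniformly bounded on $\mathcal O$, one obtains $\norm{M^\epsilon(g_1) - M^\epsilon(g_2)}_{H^s} \leq C(\epsilon, \mathcal O)\norm{g_1 - g_2}_{H^s}$.

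The main obstacle I expect is verifying the integrability in $\alpha$ uniformly in $x$ of all the terms involved, which is not automatic because $\Delta_\alpha f$ itself grows at infinity; however the decay of the denominator $1 + (\Delta_\alpha f + \Delta_\alpha \chi_\epsilon g^\epsilon)^2$ eventually dominates once $\abs{\alpha}$ exceeds a constant depending on $\epsilon$ and $\norm{g^\epsilon}_{H^s}$, and the kernel estimates of Section \ref{KernelBounds} give clean bounds. Once well-definedness and local Lipschitz continuity are established, the Picard--Lindel\"of theorem on $H^s(\mathbb R)$ provides a unique $g^\epsilon \in C^1([0, T_\epsilon]; H^s(\mathbb R))$ solving (\ref{regularized_g}) with $g^\epsilon(\cdot, 0) = g_0$, and since $\mathcal O$ is open and $g_0 \in \mathcal O$, continuity of $t \mapsto g^\epsilon(t)$ in $H^s(\mathbb R)$ ensures that $g^\epsilon(t) \in \mathcal O$ on a possibly smaller interval $[0, T_\epsilon]$, which completes the proof.
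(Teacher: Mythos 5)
Your overall strategy is the same as the paper's: prove that $M^\epsilon$ is locally Lipschitz on the open set $\mathcal O\subset H^s(\mathbb R)$, letting the outer mollifier convert low-norm control into $H^s$ control, and then apply Picard's theorem. The paper implements this by introducing the auxiliary operator $\mathfrak M^\epsilon$ (so that $M^\epsilon=\chi_\epsilon\ast\mathfrak M^\epsilon$), proving the Lipschitz estimate at the $L^2$ level, $\norm{\mathfrak M^\epsilon(g_1)-\mathfrak M^\epsilon(g_2)}_{L^2}\leq c(\norm{g_1}_{L^2},\norm{g_2}_{L^2},\epsilon)\norm{g_1-g_2}_{L^2}$, and then upgrading with $\norm{\chi_\epsilon\ast F}_{H^s}\leq c\,\epsilon^{-s}\norm{F}_{L^2}$; your mean value theorem for $z\mapsto(1+z^2)^{-1}$ plays the role of the paper's explicit identity (\ref{differences_reg}) for $K_2^\epsilon-K_1^\epsilon$.

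Two steps, however, would fail as written. First, the decay of $K^\epsilon$ in $\alpha$ is not uniform in $x$: since $\Delta_\alpha f=2x-\alpha$ vanishes at $\alpha=2x$, the kernel is of order one there, and the denominator dominates only once $\abs{\alpha-2x}$ (not $\abs{\alpha}$) exceeds a constant; so there is no bound $K^\epsilon(x,\alpha)\lesssim\alpha^{-2}$ with constants independent of $x$, and the $1/\alpha$-weighted integrals are not absolutely convergent uniformly in $x$. They must be handled as in Section \ref{KernelBounds}: principal-value cancellation and the truncated Hilbert transform near $\alpha=0$ (\cref{hilbert_truncated}, \cref{aux_main_lemma1}, and the differences $K^\epsilon(x,\alpha)-K^\epsilon(x,0)$), and for $\abs{\alpha}>1$ the $L^2_x$ bounds of \cref{main_prop_9} combined with Cauchy--Schwarz or Minkowski in $\alpha$. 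Second, in your Lipschitz step you extract the factor $\norm{g_1-g_2}_{L^2}$ through $L^\infty$ bounds on $\Delta_\alpha\chi_\epsilon(g_1-g_2)$ and $\partial_x\Delta_\alpha\chi_\epsilon(g_1-g_2)$; this only yields an $L^\infty_x$ bound on the difference of the inner $\alpha$-integrals, and convolving with $\chi_\epsilon$ does not carry $L^\infty$ into $H^s$ (or even into $L^2$). The estimate must be closed in $L^2_x$ before mollifying: keep $\chi_\epsilon(\partial_xg_1-\partial_xg_2)$, $\chi_\epsilon(g_1-g_2)$, or their translates as the $L^2_x$ factor, with norm $\leq c(\epsilon)\norm{g_1-g_2}_{L^2}$, paired either with $L^\infty_x$ bounds of the PV integrals of the kernels (local terms) or with the $L^2_x$ kernel bounds via Cauchy--Schwarz/Minkowski and the kernel-at-zero trick (shifted terms), which is precisely the paper's treatment of the terms $T_1,\dots,T_4$. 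With those two corrections your plan coincides with the paper's proof.
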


  \begin{proof} Take $g_1,g_2\in \mathcal O\subset H^s(\mathbb R).$ We define the  auxiliary operator 
            \[\mathfrak M^\epsilon(g)(x):=\int_{\mathbb R}\partial_x\Delta_\alpha(\chi_\epsilon g^\epsilon)(x)K^\epsilon(x,\alpha)d\alpha+\int_{\mathbb R}\Delta_\alpha(\chi_\epsilon g^\epsilon)(x)G^\epsilon(x,\alpha)d \alpha.\]
            We observe that  $M^\epsilon=\chi_\epsilon\ast\mathfrak M^\epsilon.$  By applying the triangle inequality we have 
    \begin{equation*}
        \begin{split}
            \norm{\mathfrak M^\epsilon(g_1)-\mathfrak M^\epsilon(g_2)}_{L^2}&\leq \norm{R_{1}}_{L^2} +\norm{R_2}_{L^2},
            \end{split}
    \end{equation*}
for 
	\begin{equation*}\setlength{\jot}{12pt}
		\begin{split}		
			R_1(x)&:=\int_{\mathbb R} \partial_x\Delta_{\alpha}(\chi_\epsilon g_1)K_1^\epsilon (x,\alpha)d\alpha-  \int_{\mathbb R} \partial_x\Delta_{\alpha}(\chi_\epsilon g_2)K_2^\epsilon (x,\alpha)d\alpha,\\
			R_2(x)&:= \int_{\mathbb R} \Delta_{\alpha}(\chi_\epsilon g_1)G_1^\epsilon (x,\alpha)d\alpha- \int_{\mathbb R} \Delta_{\alpha}(\chi_\epsilon g_2)G_2^\epsilon (x,\alpha)d\alpha,
		\end{split}
	\end{equation*}
    where $K_i^\epsilon(x,\alpha)$ and $G_i^\epsilon(x,\alpha) $ are the respective kernels for the functions $g_1$ and $g_2.$ For $R_1,$ we note that by  adding and subtracting $\partial_x\Delta_{\alpha}(\chi_\epsilon g_2)K_1^\epsilon(x,\alpha),$ we find that 
    \begin{equation*}
        \begin{split}
            R_1(x)&=\int_{\mathbb R}\Big[\partial_x\Delta_{\alpha}(\chi_\epsilon  g_1)-\partial_x\Delta_{\alpha}(\chi_\epsilon g_2)\Big]K_1^\epsilon (x,\alpha)d\alpha\hspace{0cm}-\int_{\mathbb R} \partial_x\Delta_{\alpha}(\chi_\epsilon g_2)\Big[K_2^\epsilon (x,\alpha)-K_1^\epsilon(x,\alpha)\Big]d\alpha.
        \end{split}
    \end{equation*}
We have  the following identities
    \begin{equation}\label{differences_reg}\setlength{\jot}{12pt}
        \begin{split}
 &           \partial_x\Delta_\alpha(\chi_\epsilon g_1)-\partial_x\Delta_\alpha(\chi_\epsilon g_2)=\frac{1}{\alpha}\chi_\epsilon(\partial_xg_1(x)-\partial_xg_2(x))-\frac{1}{\alpha}\chi_\epsilon(\partial_xg_1(x-\alpha)-\partial_xg_2(x-\alpha)),\\
&           \partial_x\Delta_\alpha (\chi_\epsilon g_2)\big[ K_2^\epsilon(x,\alpha)-K_1^\epsilon(x,\alpha)\big]=\Big[\Delta_\alpha(\chi_\epsilon g_1)-\Delta_\alpha(\chi_\epsilon g_2)\Big]B_\epsilon(x,\alpha),\\
&B_\epsilon(x,\alpha)=\partial_x\Delta_\alpha (\chi_\epsilon g_2)(2\Delta_\alpha f+\chi_\epsilon\Delta_{\alpha}g_1+\chi_\epsilon \Delta_{\alpha}g_2)K_1^\epsilon(x,\alpha)K_2^\epsilon(x,\alpha).
        \end{split}
    \end{equation}
Using the formulas (\ref{differences_reg}), we obtain the next decomposition
    \begin{equation*}\setlength{\jot}{12pt}
        \begin{split}
                R_1(x) & \hspace{0cm}=\chi_\epsilon[\partial_xg_1(x)-\partial_xg_2(x)]\int_{\mathbb R}\frac{1}{\alpha}K_1^\epsilon(x,\alpha)d\alpha+ \chi_\epsilon[g_1(x)-g_2(x)]\int_{\mathbb R}\frac{1}{\alpha}B_\epsilon(x,\alpha)d\alpha\\
             & \hspace{0cm}+\int_{\mathbb R}\frac{\chi_\epsilon[\partial_xg_1(x-\alpha)-\partial_xg_2(x-\alpha)]}{\alpha}K_1^\epsilon(x,\alpha)d\alpha\\
&+ \int_{\mathbb R}\frac{\chi_\epsilon[g_1(x-\alpha)-g_2(x-\alpha)]}{\alpha}B_\epsilon(x,\alpha)d\alpha\\
&:=T_1(x)+T_2(x)+T_3(x)+T_4(x).
        \end{split}
    \end{equation*}
   We use the  estimate (\ref{first_estimates_order_one}) in \cref{aux_main_lemma1} to get a bound for  $T_1$.
 Now, we use the properties (\ref{properties_mol})  to obtain   \[\norm{T_1}_{L^2}\leq c(\norm{g_1}_{L^2},\epsilon)\,\norm{g_1-g_2}_{L^2}.\]
For  $T_2$ we decompose  the  integral in the next way
	\begin{equation*}
		\begin{split}		
		PV\int_{\mathbb R}\frac{1}{\alpha}B_{\epsilon}(x,\alpha)d\alpha &:=Q_1(x)+Q_2(x)+Q_3(x),
		\end{split}
	\end{equation*}
for 
\begin{equation*}
	\begin{split}
&	Q_1(x)=2\int_{\mathbb R} \frac{1}{\alpha} \Delta_{\alpha}f\cdot\partial_x\Delta_\alpha (\chi_\epsilon g_2)\cdot K_1^\epsilon(x,\alpha)K_2^\epsilon(x,\alpha) d\alpha, \\
 &Q_2(x) =  \int_{\mathbb R}\frac{1}{\alpha} \Delta_{\alpha}(\chi_\epsilon g_1)\cdot \partial_x\Delta_\alpha (\chi_\epsilon g_2)\cdot K_1^\epsilon(x,\alpha)K_2^\epsilon(x,\alpha) d\alpha, \\
& Q_3(x)=  \int_{\mathbb R}\frac{1}{\alpha} \Delta_{\alpha}(\chi_\epsilon g_2)\cdot \partial_x\Delta_{\alpha}(\chi_\epsilon g_2)\cdot K_1^\epsilon(x,\alpha)K_2^\epsilon(x,\alpha) d\alpha.
		\end{split}
	\end{equation*}
Using the next estimates
	\begin{equation*}
		\begin{split}
			&\abs{\Delta_{\alpha}(\chi_\epsilon g_i) }\leq \norm{\chi_\epsilon g_i}_{L^\infty}\abs{\alpha}^{-1}\quad\mbox{for}\quad i=1,2,\\ 				&	\abs{\partial_x\Delta_{\alpha}(\chi_\epsilon  g_2)}\leq \norm{\chi_\epsilon \partial_xg_2}_{L^\infty}\abs{\alpha}^{-1} 
		\end{split}
	\end{equation*}
and  the next bound
\[\abs{\Delta_{\alpha }f\, K_{1}^\epsilon(x,\alpha)}\leq \abs{(\Delta_{\alpha}f+\Delta_{\alpha}(\chi_\epsilon g_1)) K_1^\epsilon(x,\alpha)}+\abs{ \Delta_{\alpha}(\chi_\epsilon g_1)\cdot K_1^\epsilon(x,\alpha)}\leq c\,(1+\norm{\chi_\epsilon \partial_x g_1}_{L^\infty})\]
we find that 
\[\big|Q_1(x)^{out}+Q_2(x)^{out}+Q_3(x)^{out}\big|\leq c\,\norm{\chi_\epsilon \partial_xg_2}_{L^\infty}(1+\norm{\chi_{\epsilon} g_1}_{L^\infty}+\norm{\chi_{\epsilon} g_2}_{L^\infty}+ \norm{\partial_x\chi_\epsilon g_1}_{L^\infty}).\]
For the \emph{in} part, we decompose  $Q_2(x)^{in}$ by adding and subtracting $\chi_\epsilon \partial_xg_1(x),\chi_\epsilon \partial_x^2g_2(x), K_1^\epsilon(x,0)$ and $K_2^\epsilon(x,0),$ then we obtain
\begin{equation*}\setlength{\jot}{1pt}
	\begin{split}	
		Q_2(x)^{in}&=\int_{\abs{\alpha}<1}\frac{1}{\alpha}\big[ \Delta_{\alpha}\chi_\epsilon g_1-\chi_{\epsilon}\partial_xg_1(x)\big]\partial_x\Delta_\alpha (\chi_\epsilon g_2)K_1^\epsilon(x,\alpha)K_2^\epsilon(x,\alpha)d\alpha\\
&+\chi_{\epsilon}\partial_xg_1(x)\int_{\abs{\alpha}<1}\frac{1}{\alpha}\big[\partial_x\Delta_\alpha (\chi_\epsilon g_2)-\chi_\epsilon \partial_x^2g_2(x)\big]K_1^\epsilon(x,\alpha)K_2^\epsilon(x,\alpha)d\alpha \\
&+\chi_{\epsilon}\partial_xg_1(x)\chi_\epsilon \partial_x^2g_2(x)\int_{\abs{\alpha}<1}\frac{1}{\alpha}\big[K_1^\epsilon(x,\alpha)-K_1^\epsilon(x,0)\big]K_2^\epsilon(x,\alpha)d\alpha\\
&+\chi_{\epsilon}\partial_xg_1(x)\chi_\epsilon \partial_x^2g_2(x)K_1^\epsilon(x,0)\int_{\abs{\alpha}<1}\frac{1}{\alpha}\big[K_2^\epsilon(x,\alpha)-K_2^\epsilon(x,0)\big]d\alpha,
	\end{split}
\end{equation*}
where the regularized kernels at zero are 
		\begin{equation*}
			\begin{split}
			&K_1^\epsilon(x,0)=\frac{1}{1+(\partial_xf(x)+\chi_\epsilon\partial_xg_1(x))^2},\\
			&K_2^\epsilon(x,0)=\frac{1}{1+(\partial_xf(x)+\chi_\epsilon\partial_xg_2(x))^2}.
		\end{split}
		\end{equation*}
In a similar way to (\ref{second_order_g}) and (\ref{second_order_h}) we have the following inequalities
	\begin{equation}\label{reg_eq_bounds}
		\begin{split}	
			&\abs{\Delta_{\alpha}\chi_{\epsilon}g_1-\chi_{\epsilon}\partial_xg_1(x)}\leq c\,\norm{\chi_{\epsilon}\partial_x^2g_1}_{L^\infty}\abs{\alpha},\\
&\abs{\partial_x\Delta_{\alpha}\chi_{\epsilon} g_2-\chi_{\epsilon}\partial_x^2g_2(x)}\leq c\,\norm{(\partial_x\chi_\epsilon)\partial_x^2g_2}_{L^\infty}\abs{\alpha},\\
  &\abs{K_1^\epsilon(x,\alpha)-K_1^\epsilon(x,0)}\leq c\,(1+\norm{\chi_\epsilon\partial_x^2g_1}_{L^\infty})\,\abs{\alpha},\\
	  &\abs{K_2^\epsilon(x,\alpha)-K_2^\epsilon(x,0)}\leq c\,(1+\norm{\chi_\epsilon\partial_x^2g_2}_{L^\infty})\,\abs{\alpha}.
		\end{split}
		\end{equation}
Hence, we deduce the following
\begin{equation*}
	\begin{split}	
	\big| Q_2(x)^{in}\big|&\leq c\,\Big(\norm{\chi_{\epsilon}\partial_x^2g_1}_{L^\infty}\norm{\chi_{\epsilon}\partial_x^2g_2}_{L^\infty}+\norm{\chi_{\epsilon}\partial_xg_1}_{L^\infty}\norm{(\partial_x\chi_\epsilon)\partial_x^2g_2}_{L^\infty}\\
&+\norm{\chi_{\epsilon}\partial_xg_1}_{L^\infty}\norm{\chi_{\epsilon}\partial_x^2g_2}_{L^\infty}(1+\norm{\chi_{\epsilon}\partial_x^2g_1}_{L^\infty}+\norm{\chi_{\epsilon}\partial_x^2g_2}_{L^\infty})\Big).
\end{split}
	\end{equation*}
Similarly to the last term, we derive that 
	\begin{equation*}
		\begin{split}	
			\big|Q_3(x)^{in}\big|&\leq  c\,\Big(\norm{\chi_{\epsilon}\partial_x^2g_2}_{L^\infty}\norm{\chi_{\epsilon}\partial_x^2g_2}_{L^\infty}+\norm{\chi_{\epsilon}\partial_xg_2}_{L^\infty}\norm{(\partial_x\chi_\epsilon)\partial_x^2g_2}_{L^\infty}\\
&+\norm{\chi_{\epsilon}\partial_xg_2}_{L^\infty}\norm{\chi_{\epsilon}\partial_x^2g_2}_{L^\infty}(1+\norm{\chi_{\epsilon}\partial_x^2g_1}_{L^\infty}+\norm{\chi_{\epsilon}\partial_x^2g_2}_{L^\infty})\Big).
		\end{split}
	\end{equation*}
We recall the definition of the auxiliary function 
\[F(x)=\frac{1}{1+x^2}\]
then we decompose  $Q_1(x)^{in}$ by adding and subtracting  $\chi_\epsilon\partial_x^2 g_2(x)$ and $F(\Delta_{\alpha}f).$ We take  $Q_1(x)^{in}:=  \mathfrak I_1(x)+ \mathfrak I_2(x)+\mathfrak I_3(x)+\mathfrak I_4(x)$ for 
	\begin{equation}\label{decomposition_in_B_epsilon}
	\begin{split}
		&\mathfrak I_1(x)=\int_{\abs{\alpha}<1}\frac{1}{\alpha}[\partial_x\Delta_{\alpha}\chi_\epsilon g_2- \chi_\epsilon\partial_x^2 g_2(x)]\Delta_{\alpha}fK_{1}^\epsilon(x,\alpha)K_2^\epsilon(x,\alpha)d\alpha,\\
		&\mathfrak I_2(x)=\chi_\epsilon\partial_x^2 g_2(x)\int_{\abs{\alpha}<1}\frac{1}{\alpha}\Delta_{\alpha}f\big[K_{2}^\epsilon(x,\alpha)-F(\Delta_\alpha f)\big]K_1^\epsilon(x,\alpha )d\alpha,\\
		&\mathfrak I_3(x)=\chi_\epsilon\partial_x^2 g_2(x)\int_{\abs{\alpha}<1}\frac{1}{\alpha}\Delta_{\alpha}fF(\Delta_{\alpha}f)\big[K_1^\epsilon(x,\alpha)-F(\Delta_\alpha f)\big]d\alpha,\\
		&\mathfrak I_4(x)=\chi_\epsilon\partial_x^2 g_2(x)\int_{\abs{\alpha}<1}\frac{1}{\alpha} \Delta_{\alpha}fF(\Delta_{\alpha}f)^2d\alpha.\\
	\end{split}
	\end{equation}
A direct computation yields to
	\begin{equation*}
	K_1^\epsilon(x,\alpha)-F(\Delta_{\alpha}f)=-\Delta_{\alpha}\chi_{\epsilon}g_1(2\Delta_{\alpha}f+\Delta_{\alpha}\chi_\epsilon g_1)K_1^\epsilon(x,\alpha)F(\Delta_{\alpha}f).\end{equation*}
Now, we decompose   $\mathfrak I_3(x)$  by adding and subtracting $\chi_\epsilon\partial_x g_1(x)$ and $K_1^\epsilon(x,0),$ then  we obtain that 
	\begin{equation*}
		\begin{split}
			\mathfrak I_3(x)&=-2\int_{\abs{\alpha}<1}\frac{1}{\alpha}
			(\Delta_{\alpha}f)^2\big[\Delta_\alpha\chi_\epsilon g_1-\chi_\epsilon\partial_x g_1(x)\big]K_1^\epsilon(x,\alpha) F(\Delta_\alpha f)^2d\alpha \\
&-2\chi_\epsilon\partial_x^2 g_2(x)\chi_\epsilon\partial_x g_1(x)\int_{\abs{\alpha}<1}\frac{1}{\alpha}
			(\Delta_{\alpha}f)^2\big[K_1^\epsilon(x,\alpha)-K_1^\epsilon(x,0)\big] F(\Delta_\alpha f)^2d\alpha \\
			&-2\chi_\epsilon\partial_x^2 g_2(x)\chi_\epsilon\partial_x g_1(x)K_1^\epsilon(x,0)\int_{\abs{\alpha}<1}\frac{1}{\alpha}(\Delta_{\alpha}f)^2F(\Delta_{\alpha}f)^2d\alpha\\
			&-\chi_\epsilon\partial_x^2 g_2(x)\int_{\abs{\alpha}<1}\frac{1}{\alpha}\Delta_{\alpha}f\big[\Delta_{\alpha}\chi_\epsilon g_1-\chi_\epsilon\partial_x g_1(x)\big]\Delta_{\alpha}\chi_\epsilon g_1K_1^\epsilon(x,						\alpha)F(\Delta_{\alpha}f)^2d\alpha\\
			&-\chi_\epsilon\partial_x^2 g_2(x) \chi_\epsilon\partial_x g_1(x)\int_{\abs{\alpha}<1}\frac{1}{\alpha}\Delta_{\alpha}f\big[\Delta_{\alpha}\chi_\epsilon g_1-\chi_\epsilon\partial_x g_1(x)\big] K_1^\epsilon(x,						\alpha)F(\Delta_{\alpha}f)^2 d\alpha\\
			&-\chi_\epsilon\partial_x^2 g_2(x)(\chi_\epsilon\partial_x g_1(x))^2\int_{\abs{\alpha}<1}\frac{1}{\alpha}\Delta_{\alpha}f\big[K_1^\epsilon(x,\alpha)-K_1^\epsilon(x,0)\big]F(\Delta_\alpha f)^2d\alpha \\
			&-\chi_\epsilon\partial_x^2 g_2(x)(\chi_\epsilon\partial_x g_1(x))^2K_1^\epsilon(x,0)\int_{\abs{\alpha}<1}\frac{1}{\alpha}\Delta_{\alpha}fF(\Delta_{\alpha}f)^2d\alpha.\\
		\end{split}
	\end{equation*}
Hence, using the estimates (\ref{reg_eq_bounds}) we find that 
		\begin{equation}\label{mixed_terms_b_21}\setlength{\jot}{11pt}
			\begin{split}
			\big|\mathfrak I_3(x)\big| &\leq c\,\Big(\norm{\chi_{\epsilon}\partial_x^2g_1}_{L^\infty}+(1+\norm{\chi_{\epsilon}\partial_x^2g_1}_{L^\infty})(\norm{\chi_{\epsilon}\partial_xg_1}_{L^\infty}+\norm{\chi_{\epsilon}\partial_xg_1}_{L^\infty}^2)\Big).\\
			\end{split}		
		\end{equation}
For the second  term in   (\ref{decomposition_in_B_epsilon})  we add and subtract $F(\Delta_\alpha f )$, and hence  
	\begin{equation}\label{bound_in_part_mixed_terms}\setlength{\jot}{11pt}
		\begin{split}
			\mathfrak I_2(x)&=\chi_\epsilon\partial_x^2 g_2(x)\int_{\abs{\alpha}<1}\frac{1}{\alpha}\Delta_{\alpha}f\big[ K_2^\epsilon (x,\alpha)-F(\Delta_{\alpha}f)\big] \big[K_1^\epsilon(x,\alpha)-F(\Delta_{\alpha}f)]d\alpha\\
			&+\chi_\epsilon\partial_x^2 g_2(x)\int_{\abs{\alpha}<1}\frac{1}{\alpha}\Delta_{\alpha}fF(\Delta_{\alpha}f)[K_2^\epsilon(x,\alpha)-F(\Delta_{\alpha}f)]d\alpha\\
& :=\mathfrak I_{2,1}(x)+\mathfrak I_{2,2}(x).
		\end{split}
	\end{equation}
The  term $\mathfrak I_{2,2}(x)$ in the last decomposition (\ref{bound_in_part_mixed_terms}) can be bounded in similar way to (\ref{mixed_terms_b_21}). While for the first one,  we observe that 
	\begin{equation}\label{long_decomposition}\setlength{\jot}{11pt}
		\begin{split}
		\mathfrak I_{2,1}(x)&=\chi_\epsilon\partial_x^2 g_2(x)\int_{\abs{\alpha}<1}\frac{1}{\alpha}\Delta_{\alpha}f K_1^\epsilon(x,\alpha)K_2^\epsilon(x,\alpha)d\alpha\\
& -\chi_\epsilon\partial_x^2 g_2(x)\int_{\abs{\alpha}<1}\frac{1}{\alpha}\Delta_{\alpha}f K_1^\epsilon(x,\alpha)F(\Delta_{\alpha}f)d\alpha\\
&-\chi_\epsilon\partial_x^2 g_2(x)\int_{\abs{\alpha}<1}\frac{1}{\alpha}\Delta_{\alpha}f K_2^\epsilon(x,\alpha)F(\Delta_{\alpha}f)d\alpha\\
& +\chi_\epsilon\partial_x^2 g_2(x)\int_{\abs{\alpha}<1}\frac{1}{\alpha}\Delta_{\alpha}f F(\Delta_{\alpha}f)^2d\alpha:= 	\mathfrak N_{1}(x)+\mathfrak N_{2}(x)+\mathfrak N_{3}(x)+\mathfrak N_{4}(x).
		\end{split}
	\end{equation}
The term $\mathfrak N_4(x)$ is bounded by lemma (\ref{hilbert_truncated}). For $\mathfrak N_2(x)$  we decompose 
by adding and subtracting $K_1^\epsilon(x,0)$  then we have 
	\begin{equation*}\setlength{\jot}{10pt}
		\begin{split}
	\mathfrak N_2(x)&=-\chi_\epsilon\partial_x^2 g_2(x)\int_{\abs{\alpha}<1}\frac{1}{\alpha}\Delta_{\alpha}f\big[K_1^\epsilon(x,\alpha)-K_1^\epsilon(x,0)\big]F(\Delta_\alpha f)d\alpha\\
&- \chi_\epsilon\partial_x^2 g_2(x) K_1^\epsilon(x,0)\int_{\abs{\alpha}<1}\frac{1}{\alpha}\Delta_{\alpha}fF(\Delta_{\alpha}f)d\alpha.\\
	\end{split}
	\end{equation*}
Using the estimates (\ref{reg_eq_bounds}) we find that 
\[\abs{\mathfrak N_2(x)}\leq c\,\norm{\chi_\epsilon \partial_x^2g_2}_{L^\infty}(1+\norm{\chi_\epsilon \partial_x^2g_1}_{L^\infty}).\]
Similarly we get
\[\abs{\mathfrak N_3(x)}\leq c\,\norm{\chi_\epsilon \partial_x^2g_2}_{L^\infty}(1+\norm{\chi_\epsilon \partial_x^2g_2}_{L^\infty}).\]
For the remaining  term in (\ref{long_decomposition}) we add and subtract ${F(\Delta_{\alpha}f), K_1^\epsilon(x,0),K_2^\epsilon(x,0)}$  and $\chi_\epsilon\partial_x g_1(x).$ We find that 
	\begin{equation*}
		\begin{split}
\mathfrak N_1(x)&=-2\chi_\epsilon\partial_x^2 g_2(x)\int_{\abs{\alpha}<1}\frac{1}{\alpha}\Delta_{\alpha}f\big[\Delta_{\alpha}\chi_\epsilon g_1-\chi_\epsilon \partial_xg_1(x)\big]K_1^\epsilon(x,\alpha)K_2^\epsilon(x,\alpha) F(\Delta_{\alpha}f)d\alpha\\
&-2\chi_\epsilon\partial_x^2 g_2(x)\chi_\epsilon\partial_x g_1(x)\int_{\abs{\alpha}<1}\frac{1}{\alpha}\Delta_{\alpha}f\big[K_1^\epsilon(x,\alpha)-K_1^\epsilon(x,0)\big]K_2^\epsilon(x,\alpha)F(\Delta_{\alpha}f)d\alpha \\
&-2\chi_\epsilon\partial_x^2 g_2(x)\chi_\epsilon\partial_x g_1(x)K_1^\epsilon(x,0)\int_{\abs{\alpha}<1}\frac{1}{\alpha}\Delta_{\alpha}f\big[K_2^\epsilon(x,\alpha)-K_2^\epsilon(x,0)\big]F(\Delta_{\alpha}f)d\alpha \\
& -2\chi_\epsilon\partial_x^2 g_2(x)\chi_\epsilon\partial_x g_1(x)K_1^\epsilon(x,0) K_2^\epsilon(x,0)\int_{\abs{\alpha}<1}\frac{1}{\alpha}\Delta_{\alpha}f F(\Delta_{\alpha}f)d\alpha\\
&-\chi_\epsilon\partial_x^2 g_2(x)\int_{\abs{\alpha}<1}\frac{1}{\alpha}\Delta_{\alpha}f\big[\Delta_{\alpha}\chi_\epsilon g_1-\chi_\epsilon\partial_x g_1(x)\big]\Delta_{\alpha}\chi_\epsilon g_1 K_1^\epsilon(x,\alpha)K_2^\epsilon(x,\alpha) F(\Delta_{\alpha}f)d\alpha\\
&-\chi_\epsilon\partial_x^2 g_2(x)\chi_\epsilon \partial_x g_1(x)\int_{\abs{\alpha}<1}\frac{1}{\alpha}\Delta_{\alpha}f[\Delta_{\alpha}\chi_\epsilon g_1-\chi_\epsilon \partial_xg_1(x)\big] K_1^\epsilon(x,\alpha)K_2^\epsilon(x,\alpha) F(\Delta_{\alpha}f)d\alpha\\
&-\chi_\epsilon\partial_x^2 g_2(x)(\chi_\epsilon \partial_x g_1(x))^2\int_{\abs{\alpha}<1}\frac{1}{\alpha}\Delta_{\alpha}f\big[K_1^\epsilon(x,\alpha)-K_1^\epsilon(x,0)\big]K_2^\epsilon(x,\alpha)F(\Delta_{\alpha}f)d\alpha\\
&-\chi_\epsilon\partial_x^2 g_2(x)(\chi_\epsilon \partial_x g_1(x))^2K_1^\epsilon(x,0)\int_{\abs{\alpha}<1}\frac{1}{\alpha}\Delta_{\alpha}f\big[K_2^\epsilon(x,\alpha)-K_2^\epsilon(x,0)\big]F(\Delta_{\alpha}f)d\alpha\\
&-\chi_\epsilon\partial_x^2 g_2(x)(\chi_\epsilon\partial_x g_1(x))^2K_1^\epsilon(x,0)K_2^\epsilon(x,0)\int_{\abs{\alpha}<1}\frac{1}{\alpha}\Delta_{\alpha}fF(\Delta_{\alpha}f)d\alpha\\
&+\chi_\epsilon\partial_x^2 g_2(x)\int_{\abs{\alpha}<1}\frac{1}{\alpha}\Delta_{\alpha}f F(\Delta_{\alpha}f)\big[K_1^\epsilon(x,\alpha)-K_1^\epsilon(x,0)\big]d\alpha\\
&+ \chi_\epsilon\partial_x^2 g_2(x)K_1^\epsilon(x,0)\int_{\abs{\alpha}<1}\frac{1}{\alpha}\Delta_{\alpha}f F(\Delta_{\alpha}f)d\alpha.\\
		\end{split}
	\end{equation*}
Using  the bounds (\ref{reg_eq_bounds}) we deduce the next estimate
\begin{equation*}\setlength{\jot}{12pt}
	\begin{split}
		\big|\mathfrak N_1(x)\big|
&\hspace{0cm}\leq c\,\Big\{1+\norm{\chi_{\epsilon}\partial_x^2g_1}_{L^\infty}+\norm{\chi_{\epsilon}\partial_xg_2}					_{L^\infty}\norm{\chi_{\epsilon}\partial_x^2g_1}_{L^\infty}\\
&\hspace{0cm}+(\norm{\chi_{\epsilon}\partial_x^2g_1}_{L^\infty}+\norm{\chi_{\epsilon}\partial_xg_1}_{L^\infty}^2)(1+\norm{\chi_{\epsilon}\partial_x^2g_2}					_{L^\infty}+\norm{\chi_{\epsilon}\partial_x^2g_1}_{L^\infty})\Big\}\norm{\chi_\epsilon\partial_x^2g_2}_{L^\infty}.
	\end{split}
\end{equation*}
The last inequality completes the estimate for the \emph{in} part $Q_1(x)^{in}$. Now, we use the   properties of mollifiers  (\ref{properties_mol}) and we  conclude that
\[\big|Q_1(x)^{in} \big|\leq c(\epsilon)\,(1+\norm{g_1}_{L^2})^3(1+\norm{g_2}_{L^2})^3\norm{g_2}_{L^2}.\] 
Therefore \[\norm{T_2}_{L^2}\leq c\,(\norm{g_1}_{L^2},\norm{g_2}_{L^2},\epsilon)\,\norm{g_1-g_2}_{L^2}.\]
Now we move to $T_3,$  for the  \emph{out} part  using the Cauchy-Schwarz inequality with respect to $\alpha,$ we find the following bound
	\begin{equation*}\setlength{\jot}{12pt}
		\begin{split}
			\norm{T_3^{out}}_{L^2}&\leq \norm{\chi_\epsilon(\partial_x g_1-\partial_xg_2)}_{L^2}\bigg(\int_{\abs{\alpha}>1}\frac{1}{\alpha^2}\int_{\mathbb R}K_1^\epsilon(x,\alpha)^2dxd\alpha\bigg)^{1/2}
		\end{split}
	\end{equation*}
which is enough to control the \emph{out} part. For the \emph{in} part we  add and subtract the term $K_1^\epsilon(x,0).$ This leads to the next decomposition
\begin{equation*}
		\begin{split}	
			\int_{\abs{\alpha}<1}\frac{\chi_\epsilon(\partial_xg_1(x-\alpha)-\partial_xg_2(x-\alpha))}{\alpha} K_1^\epsilon(x,							\alpha)d\alpha&\\
			&\hspace{-3cm}= \int_{\abs{\alpha}<1}\chi_\epsilon(\partial_xg_1(x-\alpha)-\partial_xg_2(x-\alpha))\frac{1}{\alpha}						\bigg[K_1^\epsilon(x,\alpha)-K_1^\epsilon(x,0)\bigg]d\alpha\\
			&\hspace{-3cm}+K_1^\epsilon(x,0)\int_{\abs{\alpha}<1}\frac{\chi_\epsilon(\partial_xg_1(x-\alpha)-\partial_xg_2(x-\alpha))}{\alpha}d\alpha.
\end{split}
\end{equation*}
From the above, a truncated Hilbert transform  arises. Applying the Minkowski's integral inequality and using the estimates (\ref{reg_eq_bounds}) we obtain that  
	\begin{equation*}\setlength{\jot}{12pt}
		\begin{split}
			\norm{T_{3}^{in}}_{L^2}&\hspace{0cm}\leq \bigg\lVert\int_{\abs{\alpha}<1} \chi_\epsilon(\partial_xg_1(x-\alpha)-\partial_xg_2(x-\alpha))\frac{ K_1^\epsilon(x,\alpha)-K_1^\epsilon(x,0)}{\alpha}d\alpha\bigg\rVert_{L^2}\\
&+\big\lVert K_1^\epsilon(x,0) H_{\abs{\alpha}<1} \chi_\epsilon(\partial_xg_1-\partial_xg_2)(x)\big\rVert_{L^2}\\
&\leq c\int_{\abs{\alpha}<1}(1+\norm{\chi_\epsilon\partial_x^2g_1}_{L^\infty})\bigg(\int_{\mathbb R} \chi_\epsilon\big[\partial_xg_1(x-\alpha)-\partial_xg_2(x-\alpha)\big]^2dx \bigg)^{1/2}d\alpha\\
&+\norm{K_1^\epsilon(x,0)}_{L^\infty}\norm{\chi_\epsilon(\partial_xg_1-\partial_xg_2)}_{L^2}\\
&\leq c\,(\norm{\chi_\epsilon \partial_x^2g_1}_{L^\infty},\epsilon ) \norm{\chi_\epsilon(\partial_xg_1-\partial_xg_2)}_{L^2}.
		\end{split}
	\end{equation*}
We use  the properties of mollifiers (\ref{properties_mol})  to conclude that  
\[\norm{T_3}_{L^2}\leq c\,(\norm{g_1}_{L^2},\epsilon)\norm{g_1-g_2}_{L^2}.\]
For $T_4$ we expand the sum in $B_\epsilon(x,\alpha),$ see the definitions (\ref{differences_reg}), and we repeat the argument  used in $T_3.$ We have the next decomposition
	\begin{equation*}
		\begin{split}
		B_{\epsilon}(x,\alpha)&=2\Delta_{\alpha}f K_{1}^\epsilon(x,\alpha)K_2^\epsilon(x,\alpha)\partial_x\Delta_{\alpha}(\chi_\epsilon g_2)+\Delta_{\alpha}(\chi_{\epsilon}g_1) K_{1}^\epsilon(x,\alpha)K_2^\epsilon(x,\alpha)\partial_x\Delta_{\alpha}(\chi_\epsilon g_2)\\
&\hspace{5cm}+\Delta_{\alpha}(\chi_{\epsilon}g_2 )K_{1}^\epsilon(x,\alpha)K_2^\epsilon(x,\alpha)\partial_x\Delta_{\alpha}(\chi_\epsilon g_2).
		\end{split}
	\end{equation*}
For the second term in $B_\epsilon(x,\alpha)$ we add and subtract the terms $\chi_\epsilon \partial_xg_1(x),\chi_\epsilon \partial_x^2g_2(x), K_1^\epsilon(x,0)$ and $K_2^\epsilon(x,0)$ in order to obtain
	\begin{equation*}
		\begin{split}
\partial_x\Delta_{\alpha}(\chi_\epsilon g_2)\Delta_{\alpha}(\chi_{\epsilon}g_1) K_{1}^\epsilon(x,\alpha)K_2^\epsilon(x,\alpha)&=\big[\partial_x\Delta_{\alpha}\chi_\epsilon g_2- \chi_\epsilon \partial_x^2g_2(x)\big]\Delta_{\alpha}g_1 K_{1}^\epsilon(x,\alpha)K_2^\epsilon(x,\alpha)\\			
			&+\chi_\epsilon \partial_x^2g_2(x)\big[\Delta_{\alpha}\chi_{\epsilon}g_1 -\chi_\epsilon \partial_xg_1(x)\big]K_{1}^\epsilon(x,\alpha)K_2^\epsilon(x,\alpha)\\
			&+\chi_\epsilon \partial_x^2g_2(x)\chi_\epsilon \partial_xg_1(x)\big[K_{1}^\epsilon(x,\alpha)-K_1^\epsilon(x,0)]K_2^\epsilon(x,\alpha)\\
			&+\chi_\epsilon \partial_x^2g_2(x)\chi_\epsilon \partial_xg_1(x)K_1^\epsilon(x,0)\big[K_{2}^\epsilon(x,\alpha)-K_2^\epsilon(x,0)]\\
			&+\chi_\epsilon \partial_x^2g_2(x)\chi_\epsilon \partial_xg_1(x)K_1^\epsilon(x,0)K_2^\epsilon(x,0).
	\end{split}
	\end{equation*}
Now, we use the last decomposition and the estimates (\ref{reg_eq_bounds}) together with the Minkowski's integral inequality to obtain that
	\begin{equation*}
		\begin{split}
			\bigg(\int_{\mathbb R}\bigg|\int_{\abs{\alpha}<1}\frac{\chi_\epsilon(g_1(x-\alpha)-g_2(x-\alpha))}{\alpha} \partial_x\Delta_{\alpha}(\chi_\epsilon g_2)\Delta_{\alpha}(\chi_{\epsilon}g_1) K_{1}^\epsilon(x,\alpha)K_2^\epsilon(x,\alpha)			d\alpha \bigg|^2dx\bigg)^{1/2}&\\
&\hspace{-14cm}\leq \int_{\abs{\alpha}<1}\norm{(\partial_x\chi_\epsilon) \partial_x^2g_2}_{L^\infty}\norm{\chi_\epsilon \partial_xg_1}_{L^\infty}\bigg(\int_{\mathbb R}\chi_\epsilon (g_1(x-\alpha)-g_2(x-\alpha))^2dx\bigg)^{1/2}d\alpha\\
&\hspace{-14cm}\leq \int_{\abs{\alpha}<1}\norm{\chi_\epsilon\partial_x^2g_2}_{L^\infty}\norm{\chi_\epsilon \partial_x^2g_1}_{L^\infty}\bigg(\int_{\mathbb R}\chi_\epsilon (g_1(x-\alpha)-g_2(x-\alpha))^2dx\bigg)^{1/2}d\alpha\\
&\hspace{-14cm}+\int_{\abs{\alpha}<1}\norm{\chi_\epsilon\partial_x^2g_2}_{L^\infty}\norm{\chi_\epsilon \partial_xg_1}_{L^\infty}(1+\norm{\chi_\epsilon \partial_x^2g_1}_{L^\infty})\bigg(\int_{\mathbb R}\chi_\epsilon (g_1(x-\alpha)-g_2(x-\alpha))^2dx\bigg)^{1/2}d\alpha\\
&\hspace{-14cm}+\int_{\abs{\alpha}<1}\norm{\chi_\epsilon\partial_x^2g_2}_{L^\infty}\norm{\chi_\epsilon \partial_xg_1}_{L^\infty}(1+\norm{\chi_\epsilon \partial_x^2g_2}_{L^\infty})\bigg(\int_{\mathbb R}\chi_\epsilon (g_1(x-\alpha)-g_2(x-\alpha))^2dx\bigg)^{1/2}d\alpha\\
&\hspace{-14cm}+\norm{\chi_\epsilon\partial_x^2g_2}_{L^\infty}\norm{\chi_\epsilon \partial_xg_1}_{L^\infty}\norm{H_{\abs{\alpha}<1}\chi_\epsilon(g_1-g_2)}_{L^2}\\
&\hspace{-14cm}\leq c\,(\norm{g_1}_{L^2},\norm{g_2}_{L^2},\epsilon)\norm{g_1-g_2}_{L^2}.
		\end{split}
	\end{equation*}
Analogously, we obtain a similar bound for the third term in $B_\epsilon(x,\alpha).$  For the first term in $B_\epsilon(x,\alpha)$ we decompose 
\[2\Delta_{\alpha}fK_1^\epsilon(x,\alpha)K_2^\epsilon(x,\alpha)=2(\Delta_{\alpha}f+\chi_\epsilon g_1)K_1^\epsilon(x,\alpha)K_2^\epsilon(x,\alpha)-2\chi_\epsilon \Delta_{\alpha}g_1K_1^\epsilon(x,\alpha)K_2^\epsilon(x,\alpha),\]
and  repeat the previous argument. Thus, we conclude that
\[\norm{T_4}_{L^2}\leq c\,(\norm{g_1}_{L^2},\norm{g_2}_{L^2},\epsilon)\norm{g_1-g_2}_{L^2}.\]
 By joining the estimates for  $T_1,T_2,T_3$ and $T_4$ we obtain the bound for $R_1.$  For $R_2$ we observe from the definitions (\ref{defn_kers_reg}) and  (\ref{differences_reg}) the following
    \begin{equation*}
        \begin{split}
 R_2(x)=2\int_{\mathbb R}\big[K_1^\epsilon(x,\alpha)-K_2^\epsilon(x,\alpha)\big]d\alpha=2\int_{\mathbb R}\big[\Delta_\alpha(\chi_\epsilon g_1)-\Delta_\alpha(\chi_\epsilon g_2)\big]B_\epsilon(x,\alpha)d\alpha.
        \end{split}
    \end{equation*}
Thus, similarly to $R_1$ we  obtain the next estimate
    \[\norm{R_2}_{L^2}\leq c(\norm{g_1}_{L^2},\norm{g_2}_{L^2},\epsilon)\norm{g_1-g_2}_{L^2}.\]
Therefore using the properties of mollifiers (\ref{properties_mol}) together with the bounds for $R_1$ and $R_2,$ we deduce that
    \[\norm{M^\epsilon(g_1)-M^\epsilon(g_2)}_{H^s}\leq c\,\epsilon^{-s}\norm{\mathfrak M^\epsilon(g_1)-\mathfrak M^\epsilon(g_2)}_{L^2}\leq c(\norm{g_1}_{L^2},\norm{g_2}_{L^2},\epsilon)\norm{g_1-g_2}_{L^2}.\]
   Finally, we conclude 
    \[\norm{M^\epsilon(g_1)-M^\epsilon(g_2)}_{H^s}\leq c( \norm{g_1}_{L^2},\norm{g_2}_{L^2},\epsilon)\norm{g_1-g_2}_{H^s}.\]
Thus the operator $M^\epsilon$ is locally Lipschitz on the open set $\mathcal O.$ The Picard theorem implies that there exists an unique solution $g^\epsilon\in C^1([0,T_\epsilon]: \mathcal O )$ of (\ref{regularized_g}) which completes the proof. 
\end{proof}

     Due to the properties of mollifiers (\ref{properties_mol}) we use the energy estimate obtained in section \ref{energy} and the time of existence $T_\epsilon >0$ can be changed for a time that  depends only on the initial data $g_0\in H^s(\mathbb R).$ That is
     \begin{equation}\label{uniform_bound}
        \norm{g^\epsilon(t)}_{H^3}\leq \frac{\norm{g_0}_{H^3}}{\Big(1-c[\phi(0)]^3t\Big)^{1/3}},
     \end{equation}
    and it follows that $g^\epsilon(\,\cdot\,, t)\in H^3(\mathbb R)$ when 
    $t <T^\star.$ The next step is to prove that the regularized system forms a  Cauchy sequence  with respect to the norm $L^2(\mathbb R)$ which is  the next lemma where we choose $T_0<T^\ast$.
    \begin{lemma}\label{cauchy_sequence} The sequence of regularized solutions forms  a Cauchy sequence  in $C([0,T_0]:L^2(\mathbb R))$ and we have the estimate 
        \[\norm{g^\epsilon-g^{\epsilon'}}_{L^2}(t)\leq c\,(T_0)\,( \epsilon+\epsilon'),\]
    for $\epsilon\neq \epsilon'$ and therefore there exists a limit function $g\in C([0,T_0):L^2(\mathbb R))$ such that $g^\epsilon \to g.$
\end{lemma}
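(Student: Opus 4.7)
The plan is to derive a differential inequality for $\|g^\epsilon - g^{\epsilon'}\|_{L^2}^2$ of Gr\"onwall type and then integrate. First I would subtract the regularized equations to obtain
\[
\partial_t(g^\epsilon - g^{\epsilon'}) \;=\; \chi_\epsilon\!\ast\mathfrak M^\epsilon(g^\epsilon) \;-\; \chi_{\epsilon'}\!\ast\mathfrak M^{\epsilon'}(g^{\epsilon'}),
\]
and pair with $g^\epsilon - g^{\epsilon'}$ in $L^2(\mathbb R)$. Using that $\chi_\epsilon$ is symmetric and that $\chi_\epsilon \ast$ commutes with itself, I would move one mollifier onto the test function $g^\epsilon-g^{\epsilon'}$, so the expressions inside the integrals naturally involve $\chi_\epsilon(g^\epsilon-g^{\epsilon'})$ and $\chi_{\epsilon'}(g^\epsilon-g^{\epsilon'})$. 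The main algebraic step is to decompose
\[
\chi_\epsilon\!\ast\mathfrak M^\epsilon(g^\epsilon) - \chi_{\epsilon'}\!\ast\mathfrak M^{\epsilon'}(g^{\epsilon'})
\;=\; \bigl[\chi_\epsilon\!\ast\mathfrak M^\epsilon(g^\epsilon)-\chi_\epsilon\!\ast\mathfrak M^\epsilon(g^{\epsilon'})\bigr] + \bigl[\chi_\epsilon\!\ast\mathfrak M^\epsilon(g^{\epsilon'}) - \chi_{\epsilon'}\!\ast\mathfrak M^{\epsilon'}(g^{\epsilon'})\bigr],
\]
where the first bracket is estimated essentially as in \cref{lemma_picard_reg} (producing a $\|g^\epsilon-g^{\epsilon'}\|_{L^2}$ factor) and the second bracket is the \emph{mollifier-error} term.

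For the Lipschitz-type bracket, I would reproduce the decomposition of $R_1,R_2$ from the previous lemma, but this time the constants need only depend on $\epsilon$-independent quantities, so I would systematically replace every factor $\norm{\chi_\epsilon\partial_x^k g_i}_{L^\infty}$ and $\norm{\chi_\epsilon\partial_x^k g_i}_{L^2}$ (for $k\le 2$) by the Sobolev embedding $H^3\hookrightarrow C^{2,1/2}$ and the uniform bound (\ref{uniform_bound}); derivatives falling on the mollifier that cost a $\epsilon^{-1}$ factor must be avoided, and I would do this by keeping one copy of $\chi_\epsilon$ on the test function $(g^\epsilon-g^{\epsilon'})$ throughout the integration by parts (so that the bookkeeping in $T_3, T_4$ of \cref{lemma_picard_reg} is done without ever using the $\epsilon$-dependent inequalities from (\ref{properties_mol})). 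The outcome is
\[
\bigl|\langle g^\epsilon - g^{\epsilon'},\, \chi_\epsilon\!\ast\mathfrak M^\epsilon(g^\epsilon)-\chi_\epsilon\!\ast\mathfrak M^\epsilon(g^{\epsilon'})\rangle\bigr| \;\le\; c(T_0)\,\norm{g^\epsilon-g^{\epsilon'}}_{L^2}^{2}.
\]

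For the second bracket, I would rewrite
\[
\chi_\epsilon\!\ast\mathfrak M^\epsilon(g^{\epsilon'})-\chi_{\epsilon'}\!\ast\mathfrak M^{\epsilon'}(g^{\epsilon'}) = (\chi_\epsilon-\chi_{\epsilon'})\!\ast\mathfrak M^{\epsilon'}(g^{\epsilon'}) + \chi_\epsilon\!\ast\bigl[\mathfrak M^\epsilon(g^{\epsilon'})-\mathfrak M^{\epsilon'}(g^{\epsilon'})\bigr],
\]
and in the second summand I would further split into kernel differences $K^\epsilon - K^{\epsilon'}$, $G^\epsilon - G^{\epsilon'}$ and slope differences $\partial_x\Delta_\alpha(\chi_\epsilon-\chi_{\epsilon'})g^{\epsilon'}$, each of which is either of the form $(\chi_\epsilon - \chi_{\epsilon'})\ast(\cdot)$ or can be bounded using the property $\norm{\chi_\epsilon u - u}_{H^{s-1}}\le c\epsilon\norm{u}_{H^s}$. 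Combined with (\ref{uniform_bound}) this yields an $O(\epsilon + \epsilon')$ contribution:
\[
\bigl|\langle g^\epsilon-g^{\epsilon'},\,\chi_\epsilon\!\ast\mathfrak M^\epsilon(g^{\epsilon'}) - \chi_{\epsilon'}\!\ast\mathfrak M^{\epsilon'}(g^{\epsilon'})\rangle\bigr| \;\le\; c(T_0)\,(\epsilon+\epsilon')\,\norm{g^\epsilon-g^{\epsilon'}}_{L^2}.
\]

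Putting both estimates together and using Young's inequality gives
\[
\tfrac{d}{dt}\norm{g^\epsilon-g^{\epsilon'}}_{L^2}^{2}(t) \;\le\; c(T_0)\,\norm{g^\epsilon-g^{\epsilon'}}_{L^2}^{2}(t) + c(T_0)\,(\epsilon+\epsilon')^{2},
\]
so Gr\"onwall (with vanishing initial datum $g^\epsilon(\cdot,0)-g^{\epsilon'}(\cdot,0)=0$) delivers $\norm{g^\epsilon-g^{\epsilon'}}_{L^2}(t)\le c(T_0)(\epsilon+\epsilon')$ on $[0,T_0]$. Completeness of $L^2$ produces the limit $g\in C([0,T_0]:L^2(\mathbb R))$. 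The main obstacle I anticipate is bookkeeping in the Lipschitz bracket: the proof of \cref{lemma_picard_reg} relies on $\epsilon$-dependent mollifier bounds, and to obtain a constant independent of $\epsilon$ I have to re-route every loss of regularity through either the uniform $H^3$ bound or a $C^{2,1/2}$ bound on $\chi_\epsilon g^{\epsilon}$, which in turn requires keeping the mollifier attached to the test function and not to the coefficients.
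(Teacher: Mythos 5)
Your proposal is correct and follows essentially the same route as the paper: pair the difference of the regularized equations with $g^\epsilon-g^{\epsilon'}$ in $L^2$, split into a Lipschitz-in-the-argument bracket plus an operator/mollifier-error bracket controlled by $c(T_0)(\epsilon+\epsilon')$ through the mollifier approximation properties (\ref{properties_mol}) and the uniform bound (\ref{uniform_bound}), and close with a Gr\"onwall-type integration from zero initial difference. The only notable difference is cosmetic (you freeze $g^{\epsilon'}$ where the paper freezes $g^\epsilon$), and your explicit re-routing of the Lipschitz constant through the uniform $H^3$/$C^{2,1/2}$ bounds, rather than the $\epsilon$-dependent constants of \cref{lemma_picard_reg}, is exactly the point the paper passes over when it writes $c(T_0)$ there.
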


\begin{proof}
Taking the $L^2(\mathbb R)$ product  and using the Cauchy-Schwarz inequality we obtain 
    \begin{equation*}
        \begin{split}
            \frac{1}{2}\frac{d}{dt}\norm{g^\epsilon-g^{\epsilon'}}_{L^2}^2&=\int_{\mathbb R}(g^\epsilon-g^{\epsilon'})(M^\epsilon (g^\epsilon)-M^{\epsilon'}(g^{\epsilon'}))dx\\
            &\leq \norm{g^\epsilon-g^{\epsilon'}}_{L^2}\norm{M^\epsilon(g^\epsilon)-M^{\epsilon'}(g^{\epsilon'})}_{L^2}.\\
        \end{split}
    \end{equation*}
We add and subtract $M^{\epsilon'}(g^\epsilon)$ and by using the \cref{lemma_picard_reg}, it follows
    \begin{equation*}
        \begin{split}
            \norm{M^\epsilon(g^\epsilon)-M^{\epsilon'}(g^{\epsilon'})}_{L^2}&\leq \norm{M^\epsilon(g^\epsilon)-M^{\epsilon'}(g^{\epsilon})}_{L^2}+\norm{M^{\epsilon'}(g^{\epsilon})-M^{\epsilon'}(g^{\epsilon'})}_{L^2}\\
            &\leq \norm{M^{\epsilon}(g^{\epsilon})-M^{\epsilon'}(g^{\epsilon})}_{L^2}+ c(T_0)\norm{g^\epsilon-g^{\epsilon'}}_{L^2}.
        \end{split}
    \end{equation*}
For the first term in the last inequality we add and subtract  $\mathfrak M^\epsilon(g^\epsilon)$ and  $\mathfrak M^{\epsilon'}(g^\epsilon),$ then we get 
        \begin{equation*}
                \begin{split}
                   \norm{M^{\epsilon}(g^{\epsilon})-M^{\epsilon'}(g^{\epsilon})}_{L^2}&  \leq \norm{\chi_\epsilon \mathfrak M^\epsilon(g^\epsilon)-\mathfrak M^\epsilon(g^\epsilon)}_{L^2}+\norm{\chi_{\epsilon'} \mathfrak M^{\epsilon'}(g^\epsilon)-\mathfrak M^{\epsilon'}(g^\epsilon)}_{L^2}\\
&+\norm{\mathfrak M^\epsilon(g^\epsilon)-\mathfrak M^{\epsilon'}(g^\epsilon)}_{L^2}.\\
 \end{split}
\end{equation*}
Using  the  properties of mollifiers  (\ref{properties_mol}) we deduce that 
		\begin{equation}\setlength{\jot}{10pt}\label{l2difference_cauchy_sec}
			\begin{split}
				 \norm{M^{\epsilon}(g^{\epsilon})-M^{\epsilon'}(g^{\epsilon})}_{L^2}&\leq c\,\epsilon\norm{\mathfrak M^{\epsilon}(g^\epsilon)}_{H^1}+c\,\epsilon'\norm{\mathfrak M^{\epsilon'}(g^\epsilon)}_{H^1}+\norm{\mathfrak M^\epsilon(g^\epsilon )-\mathfrak M^{\epsilon'}(g^\epsilon )}_{L^2}.
                \end{split}
        \end{equation}
The bound for the last term in (\ref{l2difference_cauchy_sec}) is obtained by  applying the  \cref{lemma_picard_reg} with  $g_1=\chi_\epsilon g^\epsilon$   and $g_2=\chi_{\epsilon'}g^{\epsilon},$ that is
        \begin{equation*}\setlength{\jot}{12pt}
            \begin{split}
                \norm{\mathfrak M^{\epsilon}( g^\epsilon )-\mathfrak M^{\epsilon'}( g^\epsilon )}_{L^2}&\leq c(T_0)\norm{\chi_{\epsilon}g^\epsilon-\chi_{\epsilon'}g^\epsilon}_{L^2}.\\
            \end{split}
        \end{equation*}
Hence by adding and subtracting $g^\epsilon$ and using the properties of mollifiers (\ref{properties_mol}) we find that 
        \begin{equation*}\setlength{\jot}{12pt}
            \begin{split}
      \norm{\chi_{\epsilon}g^\epsilon-\chi_{\epsilon'}g^\epsilon}_{L^2}          &= \norm{\chi_{\epsilon}g^\epsilon- g^\epsilon+ g^\epsilon -\chi_{\epsilon'}g^\epsilon}_{L^2}\\
                &\leq \norm{\chi_{\epsilon}g^\epsilon- g^\epsilon}_{L^2}+\norm{\chi_{\epsilon'}g^\epsilon-g^\epsilon}_{L^2}\\
                &\leq c\,\epsilon \norm{g^\epsilon}_{H^1}+c\,\epsilon' \norm{g^\epsilon}_{H^1}.
            \end{split}
        \end{equation*}
Because the solutions  $g^\epsilon$ are uniformly bounded by relation (\ref{uniform_bound}), we obtain the following 
         \begin{equation*}
			\begin{split}
				\frac{1}{2}\frac{d}{dt}\norm{g^\epsilon-g^{\epsilon'}}_{L^2}^2&\leq c\,(T_0)\norm{g^\epsilon-							g^{\epsilon'}}_{L^2}^2+c\,(T_0)(\epsilon+\epsilon')\norm{g^\epsilon-g^{\epsilon'}}_{L^2}.							\end{split}		
		\end{equation*}
Hence
		\[		\frac{1}{2}\frac{d}{dt}\norm{g^\epsilon-g^{\epsilon'}}_{L^2}\leq c\,(T_0)\big[\epsilon+\epsilon' +\norm{g^{\epsilon}-g^{\epsilon'}}_{L^2}\big].\]
Finally, we integrate  with respect to $t$ to conclude  that 
                \[\norm{g^\epsilon-g^{\epsilon'}}_{L^2}(t)\leq c\,(T_0)(\epsilon+\epsilon'),\]
                and this completes the proof.
\end{proof}

 \noindent Now we prove the main result
\begin{proof}[Proof of the main \cref{main_theorem}] By \cref{lemma_picard_reg}, there exists  solutions  $\{g^\epsilon\}$ of the regularized problem and from the energy estimate   they are uniformly bounded in $H^3(\mathbb R),$ These solutions can be continued for all time, see  theorem 3.3 in  \cite{majda2002vorticity}. By \cref{cauchy_sequence} the solutions $\{g^\epsilon\}$ forms a Cauchy sequence in  $C([0,T_0]:L^2(\mathbb R))$ hence $\{g^\epsilon\}$  converges to a function $g\in C([0,T_0]:L^2(\mathbb R)).$  Now we use Sobolev interpolation,  for any $0<s<3,$ there exists a constant $c_s>0$ such that 
        \[\norm{f}_{H^{s}}\leq c_s\norm{f}_{L^2}^{1-s/3}\norm{f}_{H^3}^{s/3}\quad\mbox{for all}\quad f\in H^3(\mathbb R). \]
We apply the previous inequality  to  the difference $g^\epsilon-g^{\epsilon'}$ to derive the following
    \begin{equation*}\setlength{\jot}{12pt}
        \begin{split}
            \norm{g^\epsilon-g^{\epsilon'}}_{H^{s}}&\leq c_s\,\norm{g^\epsilon-g^{\epsilon'}}_{L^2}^{1-s/3}\norm{g^\epsilon-g^{\epsilon'}}_{H^3}^{s/3}\\
            &\leq c\,(s,T_0)(\epsilon+\epsilon')^{1-s/3}\norm{g^\epsilon-g^{\epsilon'}}_{H^3}^{s/3}\\
            &\leq c\,(s,T_0)(\epsilon+\epsilon')^{1-s/3}.
        \end{split}
    \end{equation*}
Therefore $\{g^\epsilon\}$ forms  a Cauchy sequence in $H^{s}(\mathbb R),$ and this implies strong convergence in the space $C([0,T_0]:H^{s}(\mathbb R))$ for $s<3$ and the limit function $g$ satisfies the equation (\ref{equation_g}).\\

 \noindent For the rest of the proof  we follow several steps.\\

    \noindent{\color{blue}{\emph{Step 1}}}: Fix $t\in [0,T_0],$  we use  the energy estimate to obtain that $\{g^{\epsilon}(\cdot, t)\}$ is  a sequence uniformly bounded  in $H^3(\mathbb R).$  The  Banach-Alaoglu theorem implies that there exists a subsequence $\{g^{\epsilon}(\cdot,t)\}$  that  converges weakly to some function $\tilde g(\cdot,t)\in H^3(\mathbb R).$ \\
    
  \noindent{\color{blue}{\emph{Step 2}}}: The weak limit and the strong limit are equal pointwise in time, that is, $\tilde g(\cdot,t)=g(\cdot,t),$ where  $g$ is the function of the strong convergence in $H^{s}(\mathbb R)$ for all $t\in[0,T_0].$ We take $\varphi\in H^{-s}(\mathbb R)$ and for $g\in H^s(\mathbb R)$ we denote $\langle g,\varphi\rangle_{s}$ as the dual pairing of $H^{s}(\mathbb R)$ and $H^{-s}(\mathbb R)$ through the $L^2(\mathbb R)$ product.
    Using the  weak convergence
    \[\langle g^{\epsilon} (\cdot,t), \varphi\rangle_{3} \to \langle \tilde g(\cdot,t), \varphi \rangle_{3}, \quad\mbox{as}\quad\epsilon\to 0 \quad\mbox{for all}\quad\varphi\in H^{-3}(\mathbb R),\]
and  the inclusion $L^2(\mathbb R)\subset H^{-3}(\mathbb R),$ we see that 
    \[
    \int_{\mathbb R}\big[g^{\epsilon}(x,t)-\tilde g(x,t)\big]\varphi(x) dx\to 0, \quad\mbox{as}\quad \epsilon\to 0\quad\mbox{for all}\quad\varphi\in L^2(\mathbb R). \]
The strong convergence in $H^{s}(\mathbb R)$ implies weak convergence in $H^{s}(\mathbb R),$ thus  for the same function $\varphi\in L^2(\mathbb R)$ we have 
    \[\langle g(\cdot,t)^{\epsilon}- g(\cdot,t),\varphi\rangle_{s}\to 0, \quad\mbox{as}\quad \epsilon\to 0. \]
Therefore if $\tilde g(\cdot, t)\neq g(\cdot,t)$  we get
\[\langle g(\cdot,t)-\tilde g(\cdot,t), \varphi\rangle_0=\langle g(\cdot,t)-g^\epsilon(\cdot,t), \varphi\rangle_0+\langle g^\epsilon(\cdot,t)-\tilde g(\cdot,t), \varphi\rangle_0\to 0\]
and we have a contradition, therefore the weak limit  $\tilde g(\cdot,t)$ is equal pointwise in time to the strong limit $g(\cdot,t).$ Hence $g(\cdot,t)\in H^3(\mathbb R)$ for every $t\in[0,T_0].$\\

 \noindent{\color{blue}{\emph{Step 3}}}:  The limit function  $g\in C_\mathrm{w} ([0,T_0]:H^{3}(\mathbb R)).$ Using that  $H^{-s}(\mathbb R)$ is dense in $ H^{-3}(\mathbb R)$ for $s<3,$ we  take $\varphi\in H^{-3}(\mathbb R)$ and   $\epsilon>0,$ then  there exists $\varphi'\in H^{-s}(\mathbb R)$ such that 
    \[\norm{\varphi-\varphi'}_{H^{-3}}<\epsilon.\] 
    The uniform bound for $g^\epsilon$ together with the  triangle inequality and the Cauchy-Schwarz inequality implies  that
    \begin{equation*}\setlength{\jot}{12pt}
        \begin{split}
            \abs{\langle g^{\epsilon}(\cdot,t) -g(\cdot,t),\varphi \rangle_{3}}   &\leq \abs{\langle g^{\epsilon}(\cdot,t)-g(\cdot,t),  \varphi-\varphi'\rangle_{3}} +\abs{\langle (g^{\epsilon}-g)(\cdot,t),\varphi' \rangle_{3}}\\
            &\leq 2c\,(T_{0})\norm{\varphi-\varphi'}_{H^{-3}}+\norm{\varphi'}_{H^{-s}}\norm{g^{\epsilon}(t)-g(t)}_{H^s}.
        \end{split}
    \end{equation*}
Using the strong convergence in $H^{s}(\mathbb R)$ we have
    \[\abs{\langle g^{\epsilon}(\cdot,t) -g(\cdot,t),\varphi \rangle_{3}} \leq \epsilon \, c\,(T_0).\]
The last inequality implies that 
\[\langle g^{\epsilon}(\cdot,t),\varphi \rangle_{3}\to \langle g(\cdot,t),\varphi\rangle_{3}\]
as $\epsilon\to 0$ uniformly, therefore the limit $\langle g(\cdot,t),\phi\rangle_3$ is a continuous function in time over $[0,T_0],$ and the arbitrary choice of $\varphi\in H^{-3}(\mathbb R)$ implies that  $g\in C_{\mathrm{w}}([0,T_0]:H^3(\mathbb R)).$\\
   \end{proof}
\begin{remark}The limit solution belongs  to $H^3(\mathbb R)$ for every $t\in[0,T_0]$ and  we have 
\[g\in L^\infty([0,T_0]: H^3(\mathbb R)).\]
We observe that this argument is not sufficient to prove the continuity in time of the limit solution, due to the loss of parabolicity in the equation.
\end{remark}

\begin{acknowledgements}\emph{The author thanks \'Angel Castro and Daniel Faraco, 
his advisors,  for their suggestions to address this problem and  for their valuable comments and  discussions  in the preparation of this work. The author acknowledges financial support from the Spanish Ministry of Economy under the ICMAT Severo Ochoa grant PRE2018-084508, the Severo Ochoa Programme for Centres of Excellence Grant CEX2019-000904-S funded by MCIN/AEI/10.13039/501\\
100011033, the Grant PID2020-114703GBI00 funded by MCIN/AEI/10.13039/501100011033, the grant PI2021-124-195NB-C32 funded by MCIN/AEI/10.13039/501100011033 and the ERC Advanced Grant 834728.   Finally the author acknowledges financial support from Grants RED20\\
22-134784-T and RED2018-102650-T funded by MCIN/AEI/10.13039/501100011033.}
\end{acknowledgements}

\bibliographystyle{plain} 
\bibliography{references}
\hspace{2cm}

\noindent  Departamento de Matem\'aticas, Universidad Aut\'onoma de Madrid, 28049 Madrid, Spain\\
Instituto de Ciencias Matem\'aticas (CSIC-UAM-UC3M-UCM), 28049 Madrid, Spain\\
\textit{E-mail address:} \href{mailto:omar.sanchez@uam.es}{omar.sanchez@uam.es}\\

\end{document}